\newtheorem{cor}{Corollary}
\newtheorem{defi}{Definition}
\newtheorem{ex}{Example}
\newtheorem{rem}{Remark}
\newtheorem{prop}{Proposition}
\newtheorem {lem}{Lemma}
\newtheorem{teo}{Theorem}
\begin{document}

\title{Modal operators for meet-complemented lattices}

\author{Jos\'{e} Luis Castiglioni and Rodolfo C. Ertola-Biraben}


\maketitle

\begin{abstract}
We investigate some modal operators of necessity and possibility in the context of
meet-complemented (not necessarily distributive) lattices. We proceed in stages.
We compare our operators with others.
\end{abstract}

\section*{Introduction}

In this paper we consider certain modal operators of necessity and possibility
in the context of a (not necessarily distributive) meet-complemented lattice.
Our operators are sort of relatives of modal operators to our best knowledge first studied by Moisil in 1942
(see \cite{Moi1} or \cite{Moi2}).
He worked in a logical context were he had both intuitionistic negation $\neg$ and its dual, which we annotate $D$,
and he used $DD$ for necessity and $\neg \neg$ for possibility.
This choice we have found somewhat intriguing as, for instance, the normal modal inequality is not the case,
that is, we do not have $DD(x \to y) \leq DDx \to DDy$.
Also, because he could have chosen $\neg D$ and $D \neg$ for necessity and possibility, respectively,
operations that satisfy the just given normal modal inequality.
In 1974, Rauszer (see \cite{R}) considered lattices expanded with both the meet and the join relative complements.
In those algebras both $\neg$ and $D$ are easily definable.
However, she does not mention Moisil.
Also, she does not seem to be interested in necessity or possibility.
Later on, in 1985, L\'{o}pez-Escobar, who seems not to have been acquainted with Moisil's paper,
considered, in the context of Beth structures,
modal operators of necessity and possibility, $\neg D$ and $D \neg$, respectively.
For more or less recent papers on intuitionistic modal logic, see \cite{S} and \cite{BdP}.

In this paper we define modal operators of necessity and possibility that are very similar to
the mentioned $\neg D$ and $D \neg$.
Our operators are defined as maximum and minimum, respectively, so, they are univocal, i.e., when they exist,
there cannot be two different operations satisfying their definitions.
Another aspect of our operators is that their definition does not require $D$.
Indeed, in order to define them, it is enough to have a (not necessarily distributive) meet-complemented lattice, that is,
the usual algebraic counterpart of the connectives of conjunction, disjunction, and negation in intuitionistic logic.
The relative meet-complement, i.e. the algebraic counterpart of intuitionistic conditional, is also not needed.

The paper is organized as follows. In Section 1 we recall certain facts regarding meet-complemented lattices.
Since in this context, as already noted by Frink (see \cite{Fri}), distributivity is not forced,
we will only assume it in sections 5 and 7.
In Section 2 we expand meet-complemented lattices with necessity $\Box$ and
prove that the expansion is an equational class.
In Section 3 we expand meet-complemented lattices with possibility $\Diamond$ and
prove that the expansion is not an equational class.
In Section 4 we expand meet-complemented lattices with both $\Box$ and $\Diamond$ and
prove that the expansion is, again, an equational class.
In Section 5 we consider the distributive extension.
In Section 6 we consider the extension given by the modal logic S4-Schema, that is, $\Box \Box = \Box$.
In Section 7 we extend with both distributivity and the S4-Schema.
Finally, in Section 8 we add the relative meet-complement.
We compare $\Box$ and $\Diamond$ with similar operators.

We talk of \emph{extensions} of a class of algebras when we only add some (new) property to the operations in the given class,
for instance, distributivity.
On the other hand, we talk of \emph{expansions} when adding a (new) operation to the class,
for example, when we add necessity $\Box$ to meet-complemented lattices.

In this paper we do not consider the two logics involved (see e.g \cite{Font}).

\section{Lattices with meet-complement}

As usual, a lattice ${\bf{L}} = (L, \leq)$ will be a non-empty ordered set such that for all $a,b \in L$ there exist 
$a \wedge b$ and $a \vee b$ that satisfy the following facts:

\vspace{5pt}

{\bf{($\wedge$I)}} if $c \leq a$ and $c \leq b$, then $c \leq a \wedge b$, 

{\bf{($\wedge$E)}} $a \wedge b \leq a, b$, 

{\bf{($\vee$I)}} $a, b \leq a \vee b$, 

{\bf{($\vee$E)}} if $a \leq c$ and $b \leq c$, then $a \vee b \leq c$. 

\vspace{5pt}

\noindent Let us also remind that, even not having distributivity, we still have the following facts.

\begin{lem}
 Let $\bf{L}$ be a lattice. Then, for all $a, b, c \in L$, we have

 \noindent (i)  Distributivity of $\vee$ relative to $\wedge$: $a \vee (b \wedge c) \leq (a \vee b) \wedge (a \vee c)$,

 \noindent (ii) Factoring of infima: $(a \wedge b) \vee (a \wedge c) \leq a \wedge (b \vee c)$,

 \noindent (iii) Monotonicity of $\vee$: if $a \leq b$, then $c \vee a \leq c \vee b$.
\end{lem}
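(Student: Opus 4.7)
The plan is to prove each part by a direct application of the introduction and elimination rules listed just above the lemma, with no additional ingredients needed. Each part is a short chase of inequalities, so the main obstacle is really just bookkeeping — picking the right rule in the right order.

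For (i), I would work backward from the goal using $(\wedge\text{I})$: it suffices to establish $a \vee (b \wedge c) \leq a \vee b$ and $a \vee (b \wedge c) \leq a \vee c$. Both are then handled by $(\vee\text{E})$: for the first, note $a \leq a \vee b$ by $(\vee\text{I})$ and $b \wedge c \leq b \leq a \vee b$ by chaining $(\wedge\text{E})$ with $(\vee\text{I})$, so $(\vee\text{E})$ gives the desired bound. The second inequality is symmetric.

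For (ii), I would dually work backward using $(\vee\text{E})$: reduce to showing $a \wedge b \leq a \wedge (b \vee c)$ and $a \wedge c \leq a \wedge (b \vee c)$. Each is handled by $(\wedge\text{I})$ after observing the two trivial pieces, e.g.\ $a \wedge b \leq a$ from $(\wedge\text{E})$ and $a \wedge b \leq b \leq b \vee c$ from $(\wedge\text{E})$ followed by $(\vee\text{I})$.

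For (iii), assuming $a \leq b$, apply $(\vee\text{E})$ to the pair of facts $c \leq c \vee b$ (from $(\vee\text{I})$) and $a \leq b \leq c \vee b$ (from the hypothesis and $(\vee\text{I})$), obtaining $c \vee a \leq c \vee b$. No step here is genuinely hard; the only thing to watch is that distributivity is never invoked, so at each stage I must use only the four rules $(\wedge\text{I})$, $(\wedge\text{E})$, $(\vee\text{I})$, $(\vee\text{E})$ and the transitivity of $\leq$.
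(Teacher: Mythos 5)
Your proof is correct: each of the three parts follows exactly as you describe from the rules $(\wedge\text{I})$, $(\wedge\text{E})$, $(\vee\text{I})$, $(\vee\text{E})$ and transitivity, with distributivity never invoked. The paper states this lemma without proof (it is offered only as a reminder of standard lattice facts), and your argument is precisely the standard one that the authors implicitly have in mind.
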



\begin{cor} \label{cL1}
 Let $\bf{L}$ be a lattice with top 1.
 Then, for all $a, b, c \in L$, if $a \vee b = 1$ and $b \leq c$, then $a \vee c = 1$.
\end{cor}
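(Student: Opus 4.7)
The plan is very short, since this is an immediate consequence of part (iii) of the preceding lemma together with the assumption that $1$ is the top element.

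First I would apply monotonicity of $\vee$ (Lemma~1(iii)) to the hypothesis $b \leq c$, taking $a$ as the fixed element on the left side of the join. This yields $a \vee b \leq a \vee c$. Next I would substitute the hypothesis $a \vee b = 1$ to get $1 \leq a \vee c$. Finally, since $1$ is by assumption the top element of $\mathbf{L}$, the reverse inequality $a \vee c \leq 1$ holds automatically, and antisymmetry of $\leq$ gives $a \vee c = 1$.

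There is essentially no obstacle here; the only thing worth flagging is the orientation of the variables in Lemma~1(iii), which is stated as ``if $a \leq b$, then $c \vee a \leq c \vee b$'', so one applies it with the roles $a \mapsto b$, $b \mapsto c$, $c \mapsto a$. Commutativity of $\vee$ is not even needed in that step, although it is available. The whole argument fits in one or two lines.
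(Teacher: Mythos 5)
Your proof is correct and is exactly the argument the paper intends: the corollary is stated as an immediate consequence of Lemma~1(iii), applying $\vee$-monotonicity to $b \leq c$, substituting $a \vee b = 1$, and using that $1$ is the top. Nothing to add.
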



Let us expand a lattice $\bf{L}$ postulating the existence of the meet-complement
$\neg a = max \{b \in L: a \wedge b \leq c$, for all $c \in L\}$.
Ribenboim and Balbes (see \cite{Rib} and \cite{BalDwi}, respectively) suppose that {\bf{L}} has a bottom element.
However, it is immediate that we have

\smallskip

{\bf{($\neg$E)}} $a \wedge \neg a \leq b$, for all $a, b \in L$, for every lattice $\bf{L}$ with meet-complement.

\smallskip

\noindent We also have

\smallskip

{\bf{($\neg$I)}} for any $a, b \in L$, if $a \wedge b \leq c$, for all $c \in L$, then $b \leq \neg a$.

\smallskip

We will use the notation $\mathbb{ML}$ for the class of lattices with meet-complement.
Throughout the paper we will use the following examples of meet-complemented lattices.

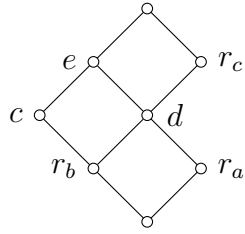
\begin{figure} [ht]
\begin{center}
\begin{tikzpicture}

    \tikzstyle{every node}=[draw, circle, fill=white, minimum size=4pt, inner sep=0pt, label distance=1mm]

    \draw (0,0) node (1) [] {}
        -- ++(315:1cm) node (rc) [label=right:$r_c$] {}
        -- ++(225:1cm) node (d) [label=right:$d$] {}
        -- ++(315:1cm) node (ra) [label=right:$r_a$] {}
        -- ++(225:1cm) node (0) [label=left:] {}
        -- ++(135:1cm) node (rb) [label=left:$r_b$] {}
        -- ++(135:1cm) node (c) [label=left:$c$] {}
        -- ++(45:1cm) node (e) [label=left:$e$] {}
        -- (1);


        \draw (d) -- (e);
        \draw (d) -- (rb);

\end{tikzpicture}
\end{center}
\caption{\label{rn8} The lattice {\bf{$R_8$}}}
\end{figure}

\begin{ex}
 (i) The three-element chain will be annotated $3$ and its middle element $m$.

 \noindent (ii) The five-element $2^{2} \oplus 1$. Any of its atoms will be annotated $l$.

 \noindent (iii) The 8-element lattice $R_8$,
 which coincides with the lattice of the first 8-elements of the Rieger-Nishimura lattice (see Figure \ref{rn8}).
 We will denote $r_b$, $r_a$, and $r_c$ the meet-reducible atom, meet-irreducible atom, and join-irreducible coatom of $R_8$,
 respectively.

 \noindent (iv) The 13-element lattice that appears in Figure \ref{13}.

 \noindent (v) The five-element modular and non-distributive lattice sometimes denoted $M_5$,
 which we shall call ``the pentagon'' (see Figure \ref{M_5}).
 We will also denote $a, b, c$ the atom non-coatom, the atom-coatom, and the coatom non-atom of the pentagon, respectively.
\end{ex}

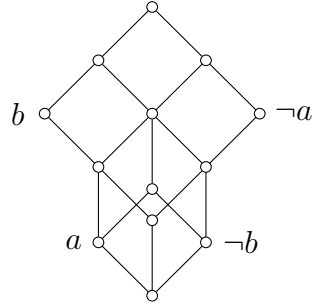
\begin{figure} [ht]
\begin{center}

\begin{tikzpicture}

    \tikzstyle{every node}=[draw, circle, fill=white, minimum size=4pt, inner sep=0pt, label distance=1mm]

    \draw (0,0) node (1) [] {}
        -- ++(225:1cm) node (2) [] {}
        -- ++(225:1cm) node (b) [label=left:$b$] {}
        -- ++(315:1cm) node (4) [] {}
        -- ++(45:1cm) node (5) [] {}
        -- ++(45:1cm) node (6) [] {}
        -- ++(315:1cm) node (na) [label=right:$\neg a$] {}
        -- ++(225:1cm) node (8) [] {}

        -- ++(270:1cm) node (nb) [label=right:$\neg b$] {}
        -- ++(135:1cm) node (10) [] {}
        -- ++(225:1cm) node (a) [label=left:$a$] {}
        -- ++(315:1cm) node (12) [] {}
        -- ++(90:1cm) node (13) [] {}

        -- (4);


        \draw (1) -- (6);
        \draw (2) -- (5);
        \draw (8) -- (5);

        \draw (12) -- (nb);
        \draw (4) -- (a);
        \draw (8) -- (13);
        \draw (5) -- (10);

\end{tikzpicture}

\end{center}
\caption{\label{13} The lattice {\bf{$13$}}}
\end{figure}

\begin{figure} [ht]
\begin{center}

\begin{tikzpicture}

    \tikzstyle{every node}=[draw, circle, fill=white, minimum size=3pt, inner sep=0pt, label distance=1mm]

    \draw (0,0) node (1) [] {}
        -- ++(225:1cm) node (c) [label=left:$c$] {}
        -- ++(270:1cm) node (a) [label=left:$a$] {}
        -- ++(315:1cm) node (0) [] {}
        -- ++(60:1.4142cm) node (b) [label=right:$b$] {}
        -- (1);

\end{tikzpicture}

\end{center}
\caption{\label{M_5} The pentagon}
\end{figure}
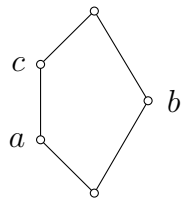

\noindent Apart from having a bottom element that will be denoted with $0$, 
any meet-complemented lattice has a top element $1$ taking $\neg(a \wedge \neg a)$, for any $a \in L$.
So, every meet-complemented lattice is bounded and we have both $\neg 0 = 1$ and $\neg 1 = 0$.

Regarding properties only involving $\neg$, we get {\bf{(DN)}}: $a \leq \neg \neg a$.
We also have that $\neg$ is antimonotonic: if $a \leq b$, then $\neg b \leq \neg a$.
As a corollary of both facts, we get {\bf{(TN)}}: $\neg \neg \neg a = \neg a$ and if $a \leq \neg b$, then $b \leq \neg a$.
If we also consider $\wedge$, we have $\neg \neg (a \wedge b)= \neg \neg a \wedge \neg \neg b$.


Note that $\neg$ does not exist in the modular and non-distributive five-element lattice sometimes denoted as $M_5$,
which we shall call ``the diamond''.
However, $\neg$ does exist in the pentagon.
So, the existence of $\neg$ does not imply modularity.
Then, it neither implies distributivity, as distributivity implies modularity (see \cite[p. 11]{BurSan}).
In this respect, $\neg$ differs from the relative meet-complement, which implies distributivity,
as was already noted by Skolem in 1919 (see \cite {Sko1} or \cite{Sko2}).

Regarding equations or inequalities involving $\vee$,
as corollaries of $\neg$-antimonotonicity, we get the following De Morgan inequalities:
$\neg (a \vee b) \leq \neg a \wedge \neg b$ and $\neg a \vee \neg b \leq \neg (a \wedge b)$.
The other De Morgan inequality valid in Heyting algebras, i.e. $\neg a \wedge \neg b \leq \neg (a \vee b)$, also holds.
Proof: We have that both $a$ and $b$ are less or equal to $\neg(\neg a \wedge \neg b)$.
Then, $a \vee b \leq \neg(\neg a \wedge \neg b)$.
By antimonotonicity of $\neg$, it follows that $\neg \neg (\neg a \wedge \neg b) \leq \neg (a \vee b)$.
So, $\neg a \wedge \neg b \leq \neg (a \vee b)$.
Consequently, distributivity is not needed in order to get any of the De Morgan inequalities valid for Heyting algebras.
For more details concerning not necessarily distributive meet-complemented lattices, see \cite {Fri}.

The inequality $(a \vee \neg a) \wedge \neg \neg a \leq a$ is not valid in the pentagon, taking $a$ to be the non-coatom atom.
So, the inequality $(a \vee b) \wedge \neg b \leq a$ is also not the case,
as can also be seen in the pentagon taking $a$ and $b$ to be the non-coatom atom and the coatom atom, respectively.
The inequality $(a \vee \neg b) \wedge b \leq a$ is also not the case,
taking in the pentagon $a$ and $b$ to be the non-coatom atom and the non-atom-coatom, respectively.
However, we do have the following facts.

\begin{lem} \label{lmc}
 Let {\bf{L}}$=(L; \wedge, \vee, \neg)$ be a meet-complemented lattice.
 Then, for all $a, b \in L$,

 (i) $(a \vee b) \wedge \neg b \leq \neg \neg a$,
 (ii) $(a \vee \neg b) \wedge b \leq \neg \neg a$,
 (iii) $(a \vee \neg b) \wedge \neg a \leq \neg b$,

 (iv) if $a \vee b = 1$, then $\neg b \leq \neg \neg a$,

 (v) if $a \vee \neg b = 1$, then $b \leq \neg \neg a$,

 (vi) if $a \vee \neg b=1$, then $\neg a \leq \neg b$,

 (vii) $\neg a=1$ iff $a=0$, and

 (viii) $a \vee \neg a=a$ iff $\neg a=0$.
\end{lem}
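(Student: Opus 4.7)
The strategy is to prove the three core inequalities (i), (ii), (iii) first, from which (iv)–(vi) drop out by specialisation; (vii) and (viii) are then short bookkeeping steps using $(\neg\mathrm{E})$, the identities $\neg 0 = 1$, $a \wedge 1 = a$, and $a \vee 0 = a$.

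For (i), I would apply $(\neg\mathrm{I})$ to reduce the target $\leq \neg\neg a$ to showing $(a \vee b) \wedge \neg b \wedge \neg a \leq c$ for every $c \in L$. Regrouping this as $(a \vee b) \wedge (\neg a \wedge \neg b)$ and using the De~Morgan inequality $\neg a \wedge \neg b \leq \neg(a \vee b)$, which the excerpt has just established without distributivity, bounds it by $(a \vee b) \wedge \neg(a \vee b)$, which lies below every $c$ by $(\neg\mathrm{E})$.

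Then (iii) is essentially a renaming of (i): apply (i) with $a$ and $b$ replaced by $\neg b$ and $a$ respectively, obtaining $(\neg b \vee a) \wedge \neg a \leq \neg\neg\neg b = \neg b$ via $(\mathrm{TN})$, which is (iii) after commuting the join. And (ii) follows from (i) with $b$ replaced by $\neg b$: that yields $(a \vee \neg b) \wedge \neg\neg b \leq \neg\neg a$, and since $b \leq \neg\neg b$ by $(\mathrm{DN})$, the meet $(a \vee \neg b) \wedge b$ sits below $(a \vee \neg b) \wedge \neg\neg b$ and hence below $\neg\neg a$.

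For (iv)–(vi), the hypothesis $a \vee b = 1$ (resp.\ $a \vee \neg b = 1$) collapses the first factor in (i) (resp.\ in (ii), (iii)) to $1$, and the conclusion can be read off directly. For (vii), the forward direction is already noted before the lemma ($\neg 0 = 1$); conversely, if $\neg a = 1$ then $a = a \wedge \neg a$, so $a \leq c$ for every $c$, giving $a = 0$. For (viii), if $\neg a = 0$ then $a \vee \neg a = a \vee 0 = a$; conversely, $a \vee \neg a = a$ forces $\neg a \leq a$, hence $\neg a \leq a \wedge \neg a$, and then $(\neg\mathrm{E})$ gives $\neg a = 0$. The only step that requires real thought is (i); everything else is either a direct rewriting via $(\mathrm{TN})$ and $(\mathrm{DN})$ or a routine consequence of $(\neg\mathrm{E})$, and the conceptual point to flag is simply that although distributivity fails, the De~Morgan inequality needed in (i) is itself available without it.
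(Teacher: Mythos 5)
Your proof is correct and follows essentially the same route as the paper: part (i) via $(\neg\mathrm{I})$ together with the De~Morgan inequality $\neg a \wedge \neg b \leq \neg(a\vee b)$ and $(\neg\mathrm{E})$, parts (iv)--(vi) by specialising the first factor to $1$, and (vii)--(viii) by the routine computations the paper leaves to the reader. The only (harmless) divergence is that you obtain (ii) and (iii) from (i) by substitution plus $(\mathrm{DN})$/$(\mathrm{TN})$, whereas the paper reruns the argument of (i) using $\neg a \wedge b \leq \neg(a \vee \neg b)$; both are valid.
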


\begin{proof}
 \noindent (i) Using ($\neg$I), it is enough to get that $\neg a \wedge ((a \vee b) \wedge \neg b) \leq c$,
 which follows using ($\neg$E), because we have seen we have $\neg a \wedge \neg b \leq \neg (a \vee b)$.
 (ii) and (iii) follow similarly, noting that we also have $\neg a \wedge b \leq \neg (a \vee \neg b)$.

 \noindent (iv), (v), and (vi) follow from (i), (ii), and (iii), respectively.

 \noindent (vii) and (viii) are easy to check.
\end{proof}

\begin{prop}
 There are finite lattices where $\neg$ does not exist.
\end{prop}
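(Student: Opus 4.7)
The plan is to exhibit one specific finite lattice in which $\neg$ fails to exist. The paragraph preceding Lemma~\ref{lmc} has already identified such an example: the diamond, i.e.\ the modular, non-distributive five-element lattice the paper denotes $M_5$. So the proof will simply be a formal verification of that remark.

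I would fix notation by labelling the three atoms of the diamond $a_1, a_2, a_3$, with bottom $0$ and top $1$, so that $a_i \wedge a_j = 0$ whenever $i \neq j$. Since the lattice has a bottom element, the condition ``$a_1 \wedge b \leq c$ for every $c \in L$'' collapses to $a_1 \wedge b = 0$ (just instantiate $c = 0$). Thus $\neg a_1$, if it existed, would be the maximum of the set
\[
S := \{b \in L : a_1 \wedge b = 0\}.
\]

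Reading off the meet table, $a_1 \wedge 0 = 0$, $a_1 \wedge a_1 = a_1 \neq 0$, $a_1 \wedge a_2 = 0$, $a_1 \wedge a_3 = 0$, and $a_1 \wedge 1 = a_1 \neq 0$, so $S = \{0, a_2, a_3\}$. The atoms $a_2$ and $a_3$ are incomparable, so $S$ has no maximum, and $\neg a_1$ does not exist.

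There is essentially no obstacle here: the proposition is an existence claim about finite lattices, and a single five-element witness together with a short meet-table check settles it. The only thing worth being careful about is to translate the ``$\leq c$ for all $c$'' clause in the definition of $\neg$ into ``$=0$'' using the presence of a bottom element, rather than silently assuming it.
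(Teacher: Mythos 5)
Your proof is correct and is exactly the paper's argument (the paper's proof reads, in full, ``Just consider any atom (or coatom) in the diamond''); you have merely written out the meet-table verification that the set $\{0,a_2,a_3\}$ has two maximal elements but no maximum. The care you take in reducing ``$a_1\wedge b\leq c$ for all $c$'' to ``$a_1\wedge b=0$'' via the bottom element is a reasonable precaution, though the paper treats it as immediate.
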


\begin{proof}
 Just consider any atom (or coatom) in the diamond.
\end{proof}


It is natural to ask whether $\mathbb{ML}$ is an equational class.
Ribenboim (see \cite{Rib}) answered positively in the distributive case.
Balbes and Dwinger (see \cite {BalDwi}) also state the solution in the distributive case.
In fact, their solution also holds in the non-distributive case.
Using the abbreviation $x \preccurlyeq y$ for $x \wedge y \approx x$,
it is enough to take the identities proving that the class of lattices is an equational class and add the following:

\smallskip

{\bf{($\neg$E)}} $x \wedge \neg x \preccurlyeq y$,

\smallskip

{\bf{($\neg$I1)}} $y \preccurlyeq \neg (x \wedge \neg x)$,

\smallskip

{\bf{($\neg$I2)}} $x \wedge \neg(x \wedge y) \preccurlyeq \neg y$.

\smallskip

\noindent In passing, note that $\vee$ is not needed in order to prove that $\mathbb{ML}$ is an equational class.

We will occasionally deal with Boolean elements.

\begin{defi}
An element $a$ of the universe of a lattice ${\bf{L}}$ is called \emph{complemented} iff
there is an element $b \in A$ such that $a \wedge b=0$ and $a \vee b = 1$.

An element $a$ in the universe of a meet-complemented lattice is called \emph{Boolean} iff $a \vee \neg a =1$
\end{defi}

Let us see that the concepts of complemented and of Boolean element agree in meet-complemented lattices.

\begin{lem} \label{BL}
An element $a$ in the universe of a meet-complemented lattice is complemented iff $a \vee \neg a = 1$.
\end{lem}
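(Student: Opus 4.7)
The plan is to prove the two directions separately, using only the universal properties $(\neg\mathrm{E})$, $(\neg\mathrm{I})$, and the already-recorded Corollary \ref{cL1}.

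For the implication from right to left, I would observe that if $a \vee \neg a = 1$, then $\neg a$ itself witnesses the complementation of $a$. The join condition is assumed, and the meet condition $a \wedge \neg a = 0$ follows from $(\neg\mathrm{E})$ applied with $b = 0$, since $(\neg\mathrm{E})$ gives $a \wedge \neg a \leq c$ for every $c \in L$ and, in particular, $a \wedge \neg a \leq 0$; combined with the fact that $0$ is the bottom, equality holds.

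For the converse, suppose $a$ is complemented, say $a \wedge b = 0$ and $a \vee b = 1$ for some $b \in L$. Since $a \wedge b = 0 \leq c$ for every $c \in L$, the rule $(\neg\mathrm{I})$ immediately yields $b \leq \neg a$. Now I apply Corollary \ref{cL1} with the pair $(a, b)$ and the element $\neg a$: from $a \vee b = 1$ and $b \leq \neg a$ it follows that $a \vee \neg a = 1$, as desired.

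There is no genuine obstacle here; the only thing worth flagging is that one must not appeal to distributivity or to any stronger algebraic structure, since $\mathbb{ML}$ is not assumed distributive at this point. The argument uses only $(\neg\mathrm{E})$, $(\neg\mathrm{I})$, and the lattice-theoretic Corollary \ref{cL1}, all of which are available in a general meet-complemented lattice.
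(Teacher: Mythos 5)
Your proof is correct and follows essentially the same route as the paper: the forward direction derives $b \leq \neg a$ from $a \wedge b = 0$ via $(\neg\mathrm{I})$ and then applies Corollary \ref{cL1}, while the reverse direction takes $\neg a$ itself as the complement using $(\neg\mathrm{E})$. No issues.
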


\begin{proof}
 $\Rightarrow)$ Suppose there is a $b$ such that (i) $a \wedge b = 0$ and (ii) $a \vee b = 1$.
 First, using (i), we have that $b \leq \neg a$, which using (ii) and Corollary \ref{cL1} gives $a \vee \neg a = 1$.

\noindent $\Leftarrow)$ Suppose $a \vee \neg a = 1$.
 The goal follows because $a \wedge \neg a = 0$.
\end{proof}


\section{Meet-complemented lattices with necessity}

Let us consider a meet-complemented lattice ${\bf{A}}$ where there exists necessity defined as
$\Box a=max\{b \in A: a \vee \neg b=1 \}$, for any $a \in A$.
It is equivalent to state both

\smallskip

{\bf{($\Box$E)}} $a \vee \neg \Box a=1$ and

\smallskip

{\bf{($\Box$I)}} if $a \vee \neg b=1$, then $b \leq \Box a$.

\begin{rem}
 Note that $\Box \emph{E}$ is equivalent to the reciprocal of $\Box \emph{I}$.
 So, $\Box \emph{E}$ and $\Box \emph{I}$ taken together are equivalent to saying that
 $b \leq \Box a$ iff $a \vee \neg b=1$.
 Also, note that $\Box \emph{I}$ is equivalent to saying that if $a \vee b=1$, then $\neg b \leq \Box a$.
\end{rem}

Let us use the notation ${\mathbb{ML^\Box}}$ for the class of meet-complemented lattices with $\Box$.

\begin{lem}
Let $\bf{A} \in \mathbb{ML^\Box}$. Then, we have

 \noindent $\Box$-Monotonicity: if $a \leq b$, then $\Box a \leq \Box b$, for any $a, b \in A$.
\end{lem}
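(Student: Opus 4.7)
The plan is to exploit the two equivalent conditions characterizing $\Box$, namely ($\Box$E) and ($\Box$I), together with Corollary \ref{cL1} to transport a witness for being below $\Box a$ to a witness for being below $\Box b$. Since $\Box a$ is defined as a maximum of a certain set, it suffices to show $\Box a$ belongs to the analogous set for $b$, i.e.\ that $b \vee \neg \Box a = 1$.

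First I would fix $a, b \in A$ with $a \leq b$ and invoke ($\Box$E) at $a$ to obtain the equation $a \vee \neg \Box a = 1$. Next I would apply Corollary \ref{cL1}: since $a \vee \neg \Box a = 1$ and $a \leq b$, the corollary (reading the disjunction as $\neg \Box a \vee a = 1$ with $a \leq b$) upgrades the equation to $b \vee \neg \Box a = 1$. Finally, I would apply ($\Box$I) at $b$ with witness $\Box a$ to conclude $\Box a \leq \Box b$, which is the desired monotonicity.

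There is no real obstacle: the argument is a short syllogism from ($\Box$E) through Corollary \ref{cL1} to ($\Box$I), and the only thing to be careful about is the direction in which Corollary \ref{cL1} is applied, since the statement of the corollary uses the second disjunct as the one being weakened, whereas here it is natural to think of enlarging $a$ to $b$ in the first disjunct; this is a cosmetic issue handled by commutativity of $\vee$.
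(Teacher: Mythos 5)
Your proposal is correct and is essentially the paper's own proof: the paper likewise derives $a \vee \neg\Box a = 1$ from ($\Box$E), passes to $b \vee \neg\Box a = 1$ using $a \leq b$ (which is exactly the content of Corollary \ref{cL1}, modulo commutativity of $\vee$), and concludes by ($\Box$I). Your only addition is to make the appeal to Corollary \ref{cL1} and the commutativity step explicit, which the paper leaves implicit.
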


\begin{proof}
By ($\Box$E) we have $a \vee \neg \Box a=1$.
Using $a \leq b$, it follows that $b \vee \neg \Box a=1$, which, using ($\Box$I), gives $\Box a \leq \Box b$.
\end{proof}

We will be particularly interested in modalities, i.e. finite combinations of unary operators,
at the present stage $\neg$ and $\Box$.
As modalities only involve one argument, we will omit it as much as possible.
Firstly, note that we have the following fact.

\begin{lem} \label{(DN)B}
Let us consider a lattice of $\mathbb{ML^\Box}$. Then,
 $\neg \neg \Box = \Box$.
\end{lem}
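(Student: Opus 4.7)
The plan is to prove the two inequalities separately. Since $\neg\neg\Box a = \Box a$ is a modality identity, I'd omit arguments only after establishing a clean argument on $a$.

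First, the inequality $\Box a \leq \neg\neg\Box a$ is immediate from (DN) applied to the element $\Box a$; this requires no work.

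The substantive direction is $\neg\neg\Box a \leq \Box a$. The natural strategy is to use the introduction rule ($\Box$I): it suffices to verify $a \vee \neg(\neg\neg\Box a) = 1$. By (TN) we have $\neg\neg\neg\Box a = \neg\Box a$, so the required identity reduces to $a \vee \neg\Box a = 1$, which is exactly ($\Box$E). Applying ($\Box$I) then delivers $\neg\neg\Box a \leq \Box a$, and combining with the first inequality finishes the proof.

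The only potential subtlety is the direction of the modality-style abbreviation, so I would write the proof in full with the argument $a$ displayed, then remark that by the convention announced just before the lemma the identity is abbreviated as $\neg\neg\Box = \Box$. There is no real obstacle here — the proof is essentially a two-line calculation once one notices that ($\Box$I) together with the triple-negation law (TN) collapses $\neg\neg\neg\Box a$ back to $\neg\Box a$, at which point ($\Box$E) is directly applicable.
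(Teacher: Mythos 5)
Your proof is correct and follows the paper's own argument exactly: both directions use (DN) for $\Box a \leq \neg\neg\Box a$, and the combination of ($\Box$E), (TN), and ($\Box$I) for the converse. The only difference is presentational — you reduce the goal backwards while the paper derives it forwards from ($\Box$E) — which is immaterial.
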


\begin{proof}
 Using ($\Box$E) we have $a \vee \neg \Box a = 1$.
 It follows by (TN) that $a \vee \neg \neg \neg \Box a = 1$,
 which using ($\Box$I) gives $\neg \neg \Box \leq \Box$.
 The other inequality follows using (DN).
\end{proof}

We will use $\circ$ for the identical modality.
Also, we will distinguish between \emph{positive} and \emph{negative} modalities.
However, we will mix results on both sorts of modalities in order to simplify proofs,
for instance in the following case.

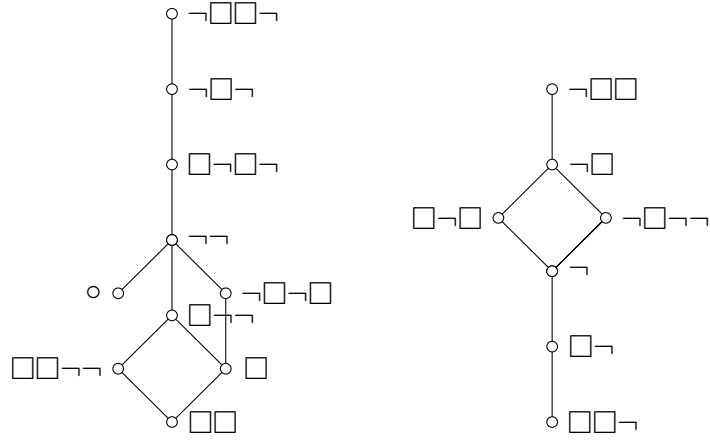
\begin{figure} [ht]
\begin{center}

\begin{tikzpicture}

    \tikzstyle{every node}=[draw, circle, fill=white, minimum size=4pt, inner sep=0pt, label distance=1mm]

    \draw (0,0) node (1) [label=right:$\neg \Box \Box \neg$] {}
        -- ++(270:1cm) node (nBnB) [label=right:$\neg \Box \neg$] {}
        -- ++(270:1cm) node (nBn) [label=right:$\Box \neg \Box \neg$] {}
        -- ++(270:1cm) node (nn) [label=right:$\neg \neg$] {}
        -- ++(315:1cm) node (nBnB) [label=right:$\neg \Box \neg \Box$] {}
        -- ++(270:1cm) node (B) [label=right:$\Box$] {}
        -- ++(225:1cm) node (BB) [label=right:$\Box \Box$] {}
        -- ++(135:1cm) node (BBnn) [label=left:$\Box \Box \neg \neg$] {}
        -- ++(45:1cm) node (Bnn) [label=right:$\Box \neg \neg$] {}
        -- ++(90:1cm) node (nn) [] {}
        -- ++(225:1cm) node (c) [label=left:$\circ$] {};

        \draw (Bnn) -- (B);

        \draw (5,-1) node (nBB) [label=right:$\neg \Box \Box$] {}
        -- ++(270:1cm) node (nB) [label=right:$\neg \Box$] {}
        -- ++(225:1cm) node (BnB) [label=left:$\Box \neg \Box$] {}
        -- ++(315:1cm) node (n) [label=right:$\neg$] {}
        -- ++(45:1cm) node (nBnn) [label=right:$\neg \Box \neg \neg$] {}
        -- ++(225:1cm) node (n) [] {}
        -- ++(270:1cm) node (Bn) [label=right:$\Box \neg$] {}
        -- ++(270:1cm) node (BBn) [label=right:$\Box \Box \neg$] {};

        \draw (nBnn) -- (nB);

\end{tikzpicture}

\end{center}
\caption{\label{pmb} Positive and negative modalities for $\neg$ and $\Box$ for up to two boxes}
\end{figure}




%



\begin{lem} \label{lmcln}
Let us consider a lattice of $\mathbb{ML^\Box}$. Then,
 (i) $\circ \leq \neg \neg$,
 (ii) $\Box \leq \Box \neg \neg$,
 (iii) $\neg \Box \neg \neg \leq \neg \Box$,
 (iv) $\Box \neg \leq \neg$,
 (v) $\Box \neg \Box \leq \neg \Box$,
 (vi) $\Box \neg \Box \neg \leq \neg \Box \neg$,
 (vii) $\Box \neg \neg \leq \neg \neg$,
 (viii) $\neg \leq \neg \Box \neg \neg$,
 (ix) $\neg \leq \Box \neg \Box$,
 (x) $\neg \neg \leq \Box \neg \Box \neg$,
 (xi) $\Box \Box \leq \Box$,
 (xii) $\Box \Box \neg \leq \Box \neg$,
 (xiii) $\neg \Box \leq \neg \Box \Box$,
 (xiv) $\neg \Box \neg \leq \neg \Box \Box \neg$.
\end{lem}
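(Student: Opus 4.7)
The plan is to extract a single master operator inequality from ($\Box$E), namely $\Box \leq \neg \neg$, and then to derive all fourteen items by combining it with (DN), (TN), $\Box$-Monotonicity, antimonotonicity of $\neg$, Corollary \ref{cL1}, and Lemma \ref{(DN)B}.

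The master fact is immediate: ($\Box$E) gives $a \vee \neg \Box a = 1$, and Lemma \ref{lmc}(v), applied with $b := \Box a$, yields $\Box a \leq \neg \neg a$. From it, items (iv)--(vii) are just instances of $\Box y \leq \neg \neg y$ combined with (TN): taking $y$ to be $\neg a$, $\neg \Box a$, $\neg \Box \neg a$, and $\neg \neg a$ respectively, the outer double negation in $\neg \neg y$ collapses via $\neg \neg \neg z = \neg z$. Items (i)--(iii) come first as warm-ups: (i) is (DN), (ii) follows from (i) by $\Box$-Monotonicity, and (iii) is (ii) under antimonotonicity of $\neg$.

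Item (xi) is the second structural use of the master fact: setting $y := \Box a$ gives $\Box \Box a \leq \neg \neg \Box a$, and Lemma \ref{(DN)B} rewrites the right-hand side as $\Box a$. The four remaining items are then purely mechanical. For (viii), apply antimonotonicity of $\neg$ to (vii) and use (TN); for (xii), apply $\Box$-Monotonicity to (iv); for (xiii) and (xiv), apply antimonotonicity of $\neg$ to (xi) and (xii).

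The only items that do not fall out of the master fact are (ix) and (x). For (ix), I would invoke ($\Box$I): ($\Box$E) gives $a \vee \neg \Box a = 1$, and (DN) together with Corollary \ref{cL1} upgrades this to $\neg \neg a \vee \neg \Box a = 1$, whence ($\Box$I) with $b := \neg a$ delivers $\neg a \leq \Box \neg \Box a$. Item (x) is then just (ix) instantiated at $\neg a$. The main obstacle here is conceptual rather than computational: once one sees that $\Box \leq \neg \neg$ is the organising principle, each of the fourteen inequalities reduces to a one- or two-line argument.
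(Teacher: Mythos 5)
Your proof is correct and follows essentially the same route as the paper: both rest on applying Lemma \ref{lmc} to ($\Box$E) and then propagating via $\Box$-monotonicity, $\neg$-antimonotonicity, (DN), (TN), and Lemma \ref{(DN)B}. The only difference is organizational: you extract the single inequality $\Box \leq \neg\neg$ from Lemma \ref{lmc}(v) and instantiate it to get (iv)--(vii) and (xi) uniformly, whereas the paper invokes Lemma \ref{lmc}(iii) and (ii) separately for (iv) and (xi) and derives (v)--(vii) as instances of (iv) --- the same ingredients, slightly repackaged.
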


\begin{proof}
 (i) is (DN).

 \noindent (ii) follows from (i) using $\Box$-monotonicity.

 \noindent (iii) follows from (ii) by $\neg$-antimonotonicity.

 \noindent (iv) Using Lemma \ref{lmc}(iii) we have
 $(\neg a \vee \neg \Box \neg a) \wedge \neg \neg \Box \neg a \leq \neg a$, for any $a$.
 Now, by ($\Box$E) we have $\neg a \vee \neg \Box \neg a = 1$.
 So, $\neg \neg \Box \neg a \leq \neg a$.
 Using (DN) it follows that $\Box \neg a \leq \neg a$.

 \noindent (v), (vi), and (vii) follow from (iv).

 \noindent (viii) follows from (vii) by $\neg$-antimonotonicity and (TN).

 \noindent (ix) follows from ($\Box$E).

 \noindent (x) follows from ($\Box$E) and (TN).

 \noindent (xi) Using Lemma \ref{lmc}(ii), we have
 $(\Box a \vee \neg \Box \Box a) \wedge \Box \Box a \leq \neg \neg \Box a$, for any $a$.
 Now, by ($\Box$E) we have $(\Box a \vee \neg \Box \Box a) =1$.
 So, we get $\Box \Box \leq \neg \neg \Box$.
 It follows by Lemma \ref{(DN)B} that $\Box \Box \leq \Box$.

 \noindent (xii) follows from (xi).

 \noindent (xiii) follows from (xi) by $\neg$-antimonotonicity.

 \noindent (xiv) follows from (xii) by $\neg$-antimonotonicity.
\end{proof}

It can also be seen that the reverse inequalities are not the case.
For that purpose, as already said,
$m$ will be the atom-coatom of the three-element chain, $l$ any of the atoms of the lattice $2^{2} \oplus 1$, and
$r_c$ the join-irreducible coatom of $R_8$.
We will also denote $a, b, c$ the atom non-coatom, the atom-coatom, and the coatom non-atom of the pentagon, respectively.

\begin{figure} [ht]
\begin{center}

\begin{tikzpicture}

    \tikzstyle{every node}=[draw, circle, fill=white, minimum size=4pt, inner sep=0pt, label distance=1mm]

    \draw (0,0) node (1) [label=right:$\neg \Box \Box \neg a$] {}
        -- ++(225:1cm) node (2) [] {}
        -- ++(225:1cm) node (b) [label=left:$\neg \Box \neg a$] {}
        -- ++(315:1cm) node (4) [] {}
        -- ++(45:1cm) node (5) [] {}
        -- ++(45:1cm) node (6) [] {}
        -- ++(315:1cm) node (na) [label=right:$\neg a$] {}
        -- ++(225:1cm) node (8) [] {}

        -- ++(270:1cm) node (nb) [label=right:$\Box \neg a$] {}
        -- ++(135:1cm) node (10) [] {}
        -- ++(225:1cm) node (a) [label=left:$a$] {}
        -- ++(315:1cm) node (12) [label=right:$\Box \Box \neg a$] {}
        -- ++(90:1cm) node (13) [] {}

        -- (4);


        \draw (1) -- (6);
        \draw (2) -- (5);
        \draw (8) -- (5);
        \draw (12) -- (nb);
        \draw (4) -- (a);
        \draw (8) -- (13);
        \draw (5) -- (10);

\end{tikzpicture}

\end{center}
\caption{\label{b13} Behaviour of $\Box$ and $\neg$ in an atom of $13$}
\end{figure}
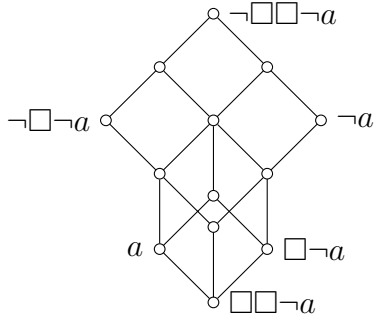

We now give the nodes of the algebras proving that none of the inverse inequalities of Lemma \ref{lmcln} holds:
For (i)-(iii) and (ix): $m$.
For (iv), (vii), (viii), and (x): $l$.
For (v), (xi), and (xiii): $r_c$.
For (vi), (xii), and (xiv): check behaviour of atom $a$ in Figure \ref{b13}, where $\Box \neg \Box \neg a = a$.

We also need to show the cases where the elements are non-comparable.
That is, using the symbol $\parallel$ for non-comparability,
we need to show both $\Box \neg \Box \parallel \neg \Box \neg \neg$ and $\circ \parallel \Box \Box$, $\Box$, $\Box \neg \neg$.
To see that neither $\Box \neg \Box \nleq \neg \Box \neg \neg$ nor $\neg \Box \neg \neg \nleq \Box \neg \Box$ consider $m$ and
the node labeled $\neg \Box \neg a$ in Figure \ref{b13}, respectively.
To prove the other non-comparable cases, it is enough, using $\leq$-transitivity,
to see that $\circ \nleq \Box \neg \neg$ and $\Box \Box \nleq \circ$, which hold using $l$ and $a$, respectively.

Regarding the interaction of $\Box$ with $\wedge$, $\vee$, $\neg$, $0$, and $1$, consider the following facts.

\begin{prop} \label{BoxIn}
  Let ${\bf{A}} \in \mathbb{ML^\Box}$ and let $a, b \in A$. Then,
 (i) $\Box(a \wedge b) \leq \Box a \wedge \Box b$,
 (ii) $\Box a \vee \Box b \leq \Box (a \vee b)$,
 (iii) $\Box (a \vee \neg b) \wedge b \leq \Box a$,
 (iv) $\Box (a \vee b) \wedge \neg b \leq \Box a$,
 (v) $\Box a \wedge \Box \neg a = 0$,
 (vi) $\Box 0 = 0$, and
 (vii) $\Box a = 1$ iff $a = 1$.
\end{prop}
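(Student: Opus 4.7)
The plan is to verify each item by invoking the defining clauses (\textbf{$\Box$E}) and (\textbf{$\Box$I}), together with $\Box$-Monotonicity and the De Morgan / double-negation facts already established.

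Items (i) and (ii) are immediate from $\Box$-Monotonicity: since $a \wedge b \leq a,b$, one gets $\Box(a \wedge b) \leq \Box a, \Box b$ and then apply ($\wedge$I); symmetrically for (ii) using ($\vee$E). Item (vi) follows from ($\Box$E) applied at $a = 0$, which yields $\neg \Box 0 = 1$; by Lemma \ref{lmc}(vii) this gives $\Box 0 = 0$. For item (vii), the direction $\Box 1 = 1$ follows from ($\Box$I) at $a = 1, b = 1$ (the premise $1 \vee \neg 1 = 1$ is trivial), and the converse follows at once from ($\Box$E): $a \vee \neg \Box a = 1$ with $\Box a = 1$ forces $a = 1$.

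The main work is items (iii) and (iv); both go by ($\Box$I), reducing the inequality $c \leq \Box a$ to the identity $a \vee \neg c = 1$. For (iii), set $c = \Box(a \vee \neg b) \wedge b$. By ($\Box$E) applied to $a \vee \neg b$ we have $a \vee \neg b \vee \neg \Box(a \vee \neg b) = 1$. The De Morgan inequality $\neg b \vee \neg \Box(a \vee \neg b) \leq \neg(b \wedge \Box(a \vee \neg b)) = \neg c$ (available from Section 1 without distributivity) and monotonicity of $\vee$ then give $a \vee \neg c = 1$. For (iv), set $c = \Box(a \vee b) \wedge \neg b$; by ($\Box$E), $a \vee b \vee \neg \Box(a \vee b) = 1$. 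Using $b \leq \neg \neg b$ (DN) and Corollary \ref{cL1}, promote $b$ to $\neg \neg b$ to obtain $a \vee \neg \neg b \vee \neg \Box(a \vee b) = 1$. Another De Morgan step $\neg \neg b \vee \neg \Box(a \vee b) \leq \neg(\neg b \wedge \Box(a \vee b)) = \neg c$ closes the case.

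Finally, for (v) I combine the previous technology. From Lemma \ref{lmcln}(iv), $\Box \neg a \leq \neg a$, whence $\Box a \wedge \Box \neg a \leq \Box a \wedge \neg a$. Now from ($\Box$E), $a \vee \neg \Box a = 1$, so Lemma \ref{lmc}(iv) yields $\neg \neg \Box a \leq \neg \neg a$; combined with Lemma \ref{(DN)B} this says $\Box a \leq \neg \neg a$, and therefore $\Box a \wedge \neg a \leq \neg \neg a \wedge \neg a = 0$ by ($\neg$E). The main obstacle is keeping (iii) and (iv) clean: one must resist trying to use distributivity and instead invoke only the one-sided De Morgan inequalities and Corollary \ref{cL1}, which are precisely what hold in $\mathbb{ML}$.
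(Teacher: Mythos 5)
Your proof is correct and follows essentially the same route as the paper's: (i)--(ii) by $\Box$-monotonicity, (iii)--(iv) by ($\Box$E) plus the one-sided De Morgan inequality and ($\Box$I) (the paper applies (DN) and associativity exactly as you do, with Corollary \ref{cL1} implicit), and (v)--(vii) from $\Box\neg a \leq \neg a$, $\Box a \leq \neg\neg a$, and Lemma \ref{lmc}(vii). No gaps.
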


\begin{proof}
 (i) follows by $\Box$-monotonicity using that $a \wedge b \leq a$.

 \noindent (ii) follows by $\Box$-monotonicity using that $a \leq a \vee b$.

 \noindent (iii) By ($\Box$E) we have $(a \vee \neg b) \vee \neg \Box (a \vee \neg b) = 1$.
 By $\vee$-associativity, we get $a \vee (\neg b \vee \neg \Box(a \vee \neg b)) = 1$.
 By De Morgan and commutativity, $a \vee \neg (\Box (a \vee \neg b) \wedge b) = 1$ and,
 by ($\Box$I), we finally get $\Box(a \vee \neg b) \wedge b \leq \Box a$.

 \noindent (iv) By ($\Box$E) we have $(a \vee b) \vee \neg \Box(a \vee b) = 1$.
 By (DN), $(a \vee \neg \neg b) \vee \neg \Box (a \vee b) = 1$.
 By $\vee$-associativity, we get $a \vee (\neg \neg b \vee \neg \Box (a \vee b)) = 1$.
 By De Morgan, $a \vee \neg(\neg b \wedge \Box (a \vee b)) = 1$. 
 By ($\Box$I) we get $\neg b \wedge \Box (a \vee b) \leq \Box a$. 
 Finally, by $\wedge$-commutativity, $\Box (a \vee b) \wedge \neg b \leq \Box a$. 

 \noindent (v) follows from Lemma \ref{lmcln}(ii), (iv), and (vi) together with the fact that $\neg \neg a \wedge \neg a = 0$.

 \noindent (vi) is easy to check.

 \noindent (vii) follows easily using Lemma 1(vii).
\end{proof}

\begin{figure} [ht]
\begin{center}

\begin{tikzpicture}

    \tikzstyle{every node}=[draw, circle, fill=white, minimum size=4pt, inner sep=0pt, label distance=1mm]


    \draw (0,0)        node (0) [] {}
        -- ++(45:1cm)  node (1) [label=right:$b$] {}
        -- ++(135:1cm) node (2) [label=below:$c$] {}
        -- ++(225:1cm) node (3) [label=left:$a$] {}
        -- ++(90:1cm)  node (4) [label=left:$d$] {}
        -- ++(45:1cm)  node (5) [] {}
        -- ++(315:1cm) node (6) [label=right:$e$] {}
        -- ++(1);


        \draw (2) -- (5);
        \draw (0) -- (3);
\end{tikzpicture}

\end{center}
\caption{\label{aa} A finite non-distributive meet-complemented lattice without $\Box$}
\end{figure}
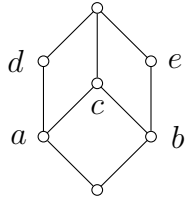

There are finite meet-complemented lattices where $\Box$ does not exist for some element.
For instance, $\Box c$ does not exist in the seven element non-distributive lattice given in Figure \ref{aa}. 
However, $\Box$ exists in the pentagon, where, taking the atom $a$,
it can be seen that the T-property $\Box a \leq a$ is not the case.


We do not have the S4-Property $\Box \leq \Box \Box$,
which can be seen in the lattice $R_8$ (see Figure \ref{rn8}) taking the coatom $r_c$.

As maybe expected, we do not have $\Box(a \vee b) \leq \Box a \vee \Box b$
(consider the coatoms in the lattice $1\oplus 2^{2}$).


Let us use the notation $\mathbb{ML^\Box}$ for the class of meet-complemented lattices expanded with necessity.

It is natural to ask whether $\mathbb{ML^\Box}$ is an equational class.
This we answer positively, as it is enough to add,
to any set of identities proving that $\mathbb{ML}$ is an equational class, the following ones:

\smallskip

{\bf{($\Box$E)}} $x \vee \neg \Box x \approx 1$,

\smallskip

{\bf{($\Box$I1)}} $\Box1 \approx 1$, 

\smallskip

{\bf{($\Box$I2)}} $\Box(x \vee \neg y) \wedge y \approx \Box x \wedge y$.

\noindent Proof: From the given equations we have to prove ($\Box$I) using equational logic, that is, 
supposing $x \vee \neg y \approx 1$ we must get to $\Box x \wedge y \approx y$. 
Suppose $x \vee \neg y \approx 1$. 
Then, using ($\Box$I1), we have $\Box(x \vee \neg y) \approx 1$. 
So, $\Box(x \vee \neg y) \wedge y \approx 1 \wedge y \approx y$. 
Now, using ($\Box$I2), it follows that $\Box x \wedge y \approx y$, as desired.   

\begin{rem}
 In fact, instead of \emph{($\Box$E)}, \emph{($\Box$I1)}, and \emph{($\Box$I2)},
 we may just take $y \preccurlyeq x \vee \neg \Box x$, $x \preccurlyeq \Box \neg (x \wedge \neg x)$,
 and $\Box (x \vee \neg y) \wedge \neg \neg y \preccurlyeq \Box x$, respectively, where, in general, 
 $x \preccurlyeq y$ abbreviates $x \wedge y \approx x$. 
\end{rem}



Let us define, in the context of a join semi-lattice {\bf{L}}, 
the dual of intuitionistic negation $Da =$ max$\{b \in L: a \vee b \leq c$, for all $c \in L \}$. 
It follows that $\bf{L}$ will be bounded with top $= 1$. 
And we will have $Da \leq b$ iff $a \vee b=1$, for any $a, b \in L$. 

Now, let us compare $\Box$ with $\neg D$.

\begin{prop} \label{DgivesBox}
 Let $\bf{A} \in \mathbb{ML}$.
 If $D$ exists in $A$, then $\Box$ also exists in $A$ with $\Box = \neg D$.
\end{prop}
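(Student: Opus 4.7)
The plan is to verify that $\neg D a$ satisfies both of the characterizing properties \textbf{($\Box$E)} and \textbf{($\Box$I)} of $\Box a$; since $\Box$ is defined as a maximum, it is uniquely determined by these two conditions, so this will give $\Box a = \neg D a$.

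First I would record the defining adjunction of $D$ in usable form: from $Da \leq b$ iff $a \vee b = 1$, we immediately get $a \vee Da = 1$ (take $b = Da$), and more generally that $a \vee b = 1$ implies $Da \leq b$. This is the only fact about $D$ that the argument will need.

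Next I would verify \textbf{($\Box$E)} for the candidate $\neg Da$, i.e., that $a \vee \neg \neg D a = 1$. From $a \vee Da = 1$ and the inequality $Da \leq \neg\neg Da$ given by \textbf{(DN)}, Corollary~\ref{cL1} yields $a \vee \neg\neg Da = 1$. Then I would verify \textbf{($\Box$I)}: assume $a \vee \neg b = 1$; the $D$-adjunction, applied with $\neg b$ in place of $b$, gives $Da \leq \neg b$, and then \textbf{(TN)} (the contrapositive-style consequence ``if $x \leq \neg y$ then $y \leq \neg x$'') produces $b \leq \neg Da$, as required.

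Having shown that $\neg Da$ witnesses the existence of $\max\{b : a \vee \neg b = 1\}$, uniqueness of the maximum forces $\Box a = \neg Da$. There is no real obstacle here: the proof is a short manipulation of two adjunction-like conditions and the basic properties of $\neg$ already established in Section~1; the only point that deserves care is using (DN) and (TN) in the right directions when passing between $Da$, $\neg\neg Da$, and $\neg Da$.
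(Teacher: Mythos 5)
Your proof is correct and follows exactly the paper's own argument: verify {\bf{($\Box$E)}} for $\neg Da$ via $a \vee Da = 1$, (DN), and Corollary~\ref{cL1}, and verify {\bf{($\Box$I)}} via the $D$-adjunction followed by (TN). The only difference is that you spell out the intermediate justifications more explicitly than the paper does.
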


\begin{proof}
 Firstly, as $a \vee Da=1$, we have $a \vee \neg \neg Da=1$.
 Secondly, suppose $a \vee \neg b=1$. Then, $Da \leq \neg b$. So, $b \leq \neg Da$.
\end{proof}

\begin{cor}
 If both $D$ and $\Box$ exist in a meet-complemented lattice, then $\Box = \neg D$.
\end{cor}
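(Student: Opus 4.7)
The plan is to deduce this immediately from Proposition \ref{DgivesBox} together with the univocity of $\Box$ that was emphasized in the Introduction. Since $\Box$ is defined as the maximum of a certain set, it is unique whenever it exists: if two elements both satisfy (\textbf{$\Box$E}) and (\textbf{$\Box$I}) for the same argument, then each is $\leq$ the other and hence they coincide.

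Concretely, assume both $D$ and $\Box$ exist in $\mathbf{A}$. By Proposition \ref{DgivesBox}, the existence of $D$ alone already forces a necessity operator to exist, and that operator is $\neg D$; in particular, $\neg D$ satisfies both (\textbf{$\Box$E}) and (\textbf{$\Box$I}). The $\Box$ assumed to exist by hypothesis also satisfies these two conditions by definition. Applying (\textbf{$\Box$I}) for $\Box$ to the instance of (\textbf{$\Box$E}) provided by $\neg D$, and vice versa, yields $\neg D \leq \Box$ and $\Box \leq \neg D$, whence $\Box = \neg D$.

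There is really no obstacle here; the corollary is essentially a uniqueness observation attached to the proposition. The only thing worth making explicit in the writeup is that ``$\Box$ exists'' in the hypothesis is in fact redundant given the existence of $D$, but stating the corollary separately makes clear the conceptual point that the two definitional routes to necessity must agree whenever both are available.
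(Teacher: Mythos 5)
Your proof is correct and matches the paper's intent exactly: the paper offers no separate argument for this corollary, treating it as immediate from Proposition \ref{DgivesBox} together with the uniqueness of $\Box$ as a maximum, which is precisely the observation you spell out. Your explicit derivation of the two inequalities via (\textbf{$\Box$E}) and (\textbf{$\Box$I}) is a faithful expansion of that implicit argument.
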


On the other hand, the six-element meet-complemented lattice that results from adding a new bottom to the diamond
is an example of a finite lattice with $\Box$ where $D$ does not exist, which can be seen considering any of the coatoms.
Also, the infinite lattice $1\oplus({\bf{\mathbb{N}}} \times {\bf{\mathbb{N}}} )^{\partial}$ is a lattice with $\Box$ such that
$D$ does not exist, where $\mathbb{N}$ denotes the set of natural numbers including $0$
and the exponent $\partial$ indicates the operation of `turning upside down' the argument (see Figure \ref{G}). 
We will use $\mathbb{N}^{\ast}$ to denote the set of natural numbers excluding $0$.

\begin{rem}
 In Lemma 2.1.4(6) L\'{o}pez-Escobar gives without proof that ``$DA\equiv \neg \Box A$'' (see \cite[p. 122]{Lop}).
 Note that we do not have $Da=\neg \Box a$ in the lattice $1 \oplus 2^2$ taking $a$ to be any coatom.
\end{rem}

\begin{prop}
Let {\bf{A}} $\in \mathbb{ML^\Box}$.
 If $D$ exists in $A$, then $D \leq \neg \Box$.
\end{prop}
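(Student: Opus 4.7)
The plan is to exploit the universal characterization of $D$ established immediately after its definition, namely that for all $a, b \in A$, $Da \leq b$ iff $a \vee b = 1$. Once this adjoint-style equivalence is at hand, the claim $D \leq \neg \Box$ reduces entirely to checking a single instance.

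Specifically, to prove that $Da \leq \neg \Box a$ for every $a \in A$, I would instantiate the characterization of $D$ with the choice $b := \neg \Box a$. This turns the desired inequality into the requirement that $a \vee \neg \Box a = 1$. But this is precisely the defining equation $(\Box\text{E})$ of $\Box$, which holds by hypothesis since $\Box$ exists on $A$. Hence the inequality follows at once.

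There is no real obstacle here: the proof is essentially a bookkeeping step, consisting of invoking the ``adjoint'' reformulation of $D$ and recognizing the resulting condition as $(\Box\text{E})$. The only thing worth emphasizing in the write-up is that although $\Box$ and $D$ are defined independently, their defining properties align perfectly on the equation $a \vee \neg \Box a = 1$, which is exactly what is needed to push $\neg \Box a$ above the minimum $Da$. No appeal to $\Box$-monotonicity, to Lemma \ref{lmc}, or to any structural result about modalities is needed.
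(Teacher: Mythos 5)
Your proof is correct and is essentially the paper's own argument: both reduce $Da \leq \neg\Box a$ to the single condition $a \vee \neg\Box a = 1$, which is exactly ($\Box$E). The paper phrases this as ``$\neg\Box a$ belongs to the set of which $Da$ is the minimum,'' while you invoke the equivalent adjoint-style characterization $Da \leq b$ iff $a \vee b = 1$; there is no substantive difference.
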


\begin{proof}
 It is enough to prove that $\neg \Box \in \{b \in A: a \vee b=1 \}$, which is exactly what ($\Box$E) says.
\end{proof}

Regarding Boolean elements, we have the following fact.

\begin{lem} \label{lBB}
 Let $\bf{A} \in \mathbb{ML^\Box}$ and $a \in A$.
 Then, $a$ is Boolean iff $a \leq \Box a$.
\end{lem}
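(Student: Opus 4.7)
The plan is to prove both directions directly from the defining properties of $\Box$, namely ($\Box$E) and ($\Box$I), together with $\neg$-antimonotonicity and Corollary \ref{cL1}. Neither direction requires anything deeper; the whole point of the lemma is that Booleanness is exactly the condition that makes $a$ eligible as its own witness in the definition of $\Box a$.

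For the forward direction, I would assume $a$ is Boolean, i.e.\ $a \vee \neg a = 1$. This is precisely the hypothesis of ($\Box$I) with $b$ instantiated to $a$, so it immediately yields $a \leq \Box a$.

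For the backward direction, I would assume $a \leq \Box a$. By $\neg$-antimonotonicity this gives $\neg \Box a \leq \neg a$. On the other hand, ($\Box$E) supplies $a \vee \neg \Box a = 1$. Applying Corollary \ref{cL1} (which says that if a join equals $1$ and we enlarge one of the two summands, the join is still $1$) with $\neg \Box a \leq \neg a$, I conclude $a \vee \neg a = 1$, so $a$ is Boolean.

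The main obstacle, if any, is purely bookkeeping: making sure to use ($\Box$I) in the appropriate direction in the first half, and invoking Corollary \ref{cL1} correctly (rather than trying to manipulate $\Box a$ directly) in the second half. No new identities or non-trivial modal reasoning are needed.
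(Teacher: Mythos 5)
Your proof is correct and is exactly the argument the paper intends: the forward direction is ($\Box$I) with $b=a$, and the backward direction combines ($\Box$E) with $\neg$-antimonotonicity and Corollary \ref{cL1}. The paper's own proof just says this "follows immediately using ($\Box$I) and ($\Box$E)," so you have merely supplied the details it leaves implicit.
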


\begin{proof}
 It follows immediately using ($\Box$I) and ($\Box$E).
\end{proof}

Note that also the following holds.

\begin{lem}
 Let {\bf{A}} $\in \mathbb{ML^\Box}$ and $a \in A$.
 Then, $a \leq \Box a$ iff $a \leq \Box^{n}a$, for any $n \in \mathbb{N^{\ast}}$.
\end{lem}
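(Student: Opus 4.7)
The plan is to prove the biconditional by induction, with $\Box$-monotonicity as the only real ingredient. The backward direction (from $a \leq \Box^{n} a$ for every $n \in \mathbb{N}^{\ast}$ to $a \leq \Box a$) is immediate: just specialize to $n = 1$.

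For the forward direction, assume $a \leq \Box a$, and prove $a \leq \Box^{n} a$ by induction on $n \in \mathbb{N}^{\ast}$. The base case $n = 1$ is precisely the hypothesis. For the inductive step, suppose $a \leq \Box^{n} a$. Applying $\Box$-monotonicity (established in the lemma right after the definition of $\mathbb{ML^\Box}$) to this inequality yields $\Box a \leq \Box^{n+1} a$. Chaining with the hypothesis $a \leq \Box a$ via $\leq$-transitivity gives $a \leq \Box^{n+1} a$, closing the induction.

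There is no real obstacle here: the whole content of the statement is that $\Box$-monotonicity lets the single-step inequality $a \leq \Box a$ be iterated. In particular, no use of $(\Box\text{E})$, $(\Box\text{I})$, the characterization of Boolean elements from Lemma \ref{lBB}, or any other property of $\Box$ beyond monotonicity is required. One might alternatively invoke Lemma \ref{lBB} to reduce the claim to a statement about Boolean elements, but this detour offers no simplification over the direct induction.
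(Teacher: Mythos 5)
Your proof is correct and is exactly the argument the paper intends: its one-line proof reads ``It follows by induction using $\Box$-monotonicity and $\leq$-transitivity,'' which is precisely the induction you spell out. No differences to report.
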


\begin{proof}
 It follows by induction using $\Box$-monotonicity and $\leq$-transitivity.
\end{proof}

Now, let us consider the following operation concerning Boolean elements.

\begin{defi}
Let $\bf{A} \in \mathbb{ML^\Box}$ and $a \in A$.
Then, the greatest Boolean element below $a$ is $Ba = max \{b \in A: b \leq a$ and $b \vee \neg b = 1 \}$.
\end{defi}

We have, for any $a, b \in A$, {\bf{(BE1)}}: $Ba \leq a$, {\bf{(BE2)}}: $Ba \vee \neg Ba = 1$, and
{\bf{(BI)}}: if $b \leq a$ and $b \vee \neg b=1$, then $b \leq Ba$.

Now, let us compare $\Box$ with $B$.

\begin{prop}
 Let {\bf{A}} $\in \mathbb{ML^\Box}$.
 If $B$ exists in $A$, then $B \leq \Box$.
\end{prop}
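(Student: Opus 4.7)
The plan is to show $Ba \leq \Box a$ for every $a \in A$, which is the required inequality $B \leq \Box$ interpreted pointwise. I would proceed directly from the defining properties of $B$, using only (BE1), (BE2), ($\Box$I), and Corollary \ref{cL1}.

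First, by (BE2) we have $Ba \vee \neg Ba = 1$. By (BE1) we have $Ba \leq a$. Applying Corollary \ref{cL1} with the roles of $a,b,c$ there played by $\neg Ba$, $Ba$, and $a$ respectively, from $Ba \vee \neg Ba = 1$ together with $Ba \leq a$ we conclude $a \vee \neg Ba = 1$. Now invoke ($\Box$I) to get $Ba \leq \Box a$, as required.

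As an alternative (and essentially equivalent) route, one could instead observe that $Ba$ is Boolean by (BE2), so by Lemma \ref{lBB} we have $Ba \leq \Box Ba$; then $\Box$-monotonicity together with (BE1) yields $\Box Ba \leq \Box a$, and $\leq$-transitivity closes the argument. Either way, no serious obstacle arises: the statement is a direct consequence of the fact that $Ba$ is a Boolean element below $a$ combined with the characterisation of $\Box$ by ($\Box$I). The only mild subtlety is noticing that ($\Box$I) applies precisely because $Ba$ is already known to satisfy the hypothesis $a \vee \neg(Ba) = 1$, which is exactly what Corollary \ref{cL1} delivers from the Boolean-ness of $Ba$.
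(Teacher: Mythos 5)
Your first argument is exactly the paper's proof: establish $a \vee \neg Ba = 1$ from (BE2), (BE1), and Corollary \ref{cL1}, then apply ($\Box$I). Both it and your alternative via Lemma \ref{lBB} are correct.
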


\begin{proof}
 It is enough to see that $a \vee \neg Ba=1$, which follows from (BE2), (BE1), and Corollary \ref{cL1}.
\end{proof}

The reciprocal inequality is not the case.
To see it, take $R_8$, where the only Boolean elements are $0$ and $1$, and observe that $\Box r_c=r_a \nleq 0 = B r_c$.


\vspace{5pt}

In the rest of this section we prove that the existence of $\Box$ does not imply the existence of $B$.

\begin{defi}
 Let $P$ be a partial order.
 We will say that an upset $U$ of $P$ is Boolean iff $(\downarrow U)^{c} \cup U = P$.
\end{defi}

\begin{lem}
 An upset $U$ of a partial order $P$ is Boolean iff $U$ is decreasing.
\end{lem}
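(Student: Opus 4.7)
The plan is to show that the Boolean condition $(\downarrow U)^c \cup U = P$ is logically equivalent to the inclusion $\downarrow U \subseteq U$, and then to observe that since $U \subseteq \downarrow U$ holds trivially for any set, the latter is equivalent to $\downarrow U = U$, which is precisely the statement that $U$ is a downset (i.e.\ decreasing). So the whole lemma reduces to this elementary rephrasing; interestingly, the hypothesis that $U$ is an upset is not essential to either direction.

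For the forward direction, I would take an arbitrary $p \in \downarrow U$ and pick a witness $u \in U$ with $p \leq u$. Since $p \in \downarrow U$, the Boolean hypothesis $(\downarrow U)^c \cup U = P$ forces $p \in U$. This gives $\downarrow U \subseteq U$, and combined with the trivial $U \subseteq \downarrow U$ we get $\downarrow U = U$, so $U$ is decreasing.

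For the converse, I would assume $U$ is decreasing, i.e.\ $\downarrow U = U$. Then $(\downarrow U)^c = U^c$, and a one-line set-theoretic computation gives $(\downarrow U)^c \cup U = U^c \cup U = P$, which is the Boolean condition.

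There is no real obstacle; the argument is pure definition-chasing. The only point worth flagging is recognising that the substantive content of $(\downarrow U)^c \cup U = P$ is exactly the inclusion $\downarrow U \subseteq U$, after which the equivalence with being a downset is immediate from the always-available inclusion $U \subseteq \downarrow U$.
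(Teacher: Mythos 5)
Your proof is correct and is exactly the intended definition-chasing argument: the paper states this lemma without proof, and the equivalence of $(\downarrow U)^c \cup U = P$ with $\downarrow U \subseteq U$, hence with $\downarrow U = U$, is the only natural route. Your side observation that the upset hypothesis is not needed is also accurate.
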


Let us consider the set $(\mathbb{N}, \preccurlyeq)$ of natural numbers with the unusual order $\preccurlyeq$ as in Figure \ref{Ndo}.

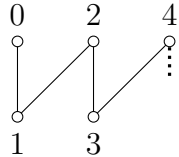
\begin{figure} [ht]
\begin{center}

\begin{tikzpicture}

    \tikzstyle{every node}=[draw, circle, fill=white, minimum size=4pt, inner sep=0pt, label distance=1mm]

    \draw (0,0)

           ++(90:.5cm)		node (4) [label=above:$4$] {}
        -- ++(225:1.4142cm)	node (3) [label=below:$3$] {}
        -- ++(90:1cm)		node (2) [label=above:$2$] {}
        -- ++(225:1.4142cm)	node (1) [label=below:$1$] {}
        -- ++(90:1cm)		node (0) [label=above:$0$] {};

    \draw [dotted, very thick] (4)--(0,0);

\end{tikzpicture}

\end{center}
\caption{\label{Ndo} The set of natural numbers with the given order order}
\end{figure}

\noindent Let $X$ be the poset $\coprod\limits_{i \in \mathbb{N}}\mathbb{N}$, that is,
$X$ consists of enumerable non-comparable copies of $\mathbb{N}$.

\noindent We may also think of $X$ as defined by $\{(i,j): i, j \in N \}$,
where $(i,j) \leq_{X} (i', j')$ iff $i=i'$ and $j \preccurlyeq j'$.
Let us write \\

$X_{i} = \{(i,j) \in X: j \in N \}$, for each $i \in N$, and

$U_{i}=U \cap X_{i}$, for any $U \subseteq X$. \\


\noindent The set $X_i$ will be called the $i$th component of $X$ and the set $U_i$ the $i$th component of $U$.

\begin{defi}
 A distinguished set of X is a subset $U \subseteq X$ such that 
 there is a finite set of indices $I \subseteq \mathbb{N}$ such that \\

 \begin{tabular}{lll}
 \emph{(a)} & $U_i$ is finite & if $i \in I$, \\

 \emph{(b)} & $U_i$ is cofinite relative to $X_i$ & if $i \notin I$.
 \end{tabular}

\vspace{5pt}
 
\noindent A codistinguished set of X is a subset $U \subseteq X$ such that $U^c$ is a distinguished set.
\end{defi}

\noindent That is, a distinguished set has finite components for finite indices and cofinite components for cofinite indices. 
On the other hand, a codistinguished set has cofinite components for finite indices and finite components for cofinite indices.

The following Facts follow straightforwardly from the definition of (co)distinguished set, and they supply a proof for Theorem 1.

\noindent Fact 1. $X$ is distinguished and $\emptyset$ is codistinguished.

\noindent Fact 2. The union and intersection of two distinguished sets are distinguished sets.

\noindent Corollary of Fact 2. The union and intersection of two codistinguished sets are codistinguished sets.

\noindent Fact 3. If $U$ is a distinguished set and $V$ is a codistinguished set, then $U \cap V$ is a codistinguished set.

\noindent Corollary of Fact 3. If $U$ is a distinguished set and $V$ is a codistinguished set, 
then $U \cup V$ is a distinguished set.

\begin{teo}
 The set of distinguished sets and codistinguished sets with the operations $\cap$, $\cup$, and $^c$
 forms a Boolean subalgebra $A$ of the subset-algebra of $X$.
\end{teo}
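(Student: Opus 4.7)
The plan is to verify directly that the set $A$ of distinguished and codistinguished subsets of $X$ is closed under the three Boolean operations $\cap$, $\cup$, and ${}^c$, and contains the top $X$ and the bottom $\emptyset$. Once closure is established, the Boolean algebra laws (associativity, commutativity, absorption, distributivity, complementation identities) are inherited for free from the ambient powerset algebra $\mathcal{P}(X)$, so $A$ will automatically be a Boolean subalgebra.

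First I would record the bounds: by Fact 1, $X \in A$ and $\emptyset \in A$. Next, closure under complement is immediate from the definition of codistinguished set: if $U$ is distinguished, then $U^c$ is codistinguished by definition; and if $U$ is codistinguished, then by definition $U^c$ is distinguished, so in particular $(U^c)^c = U \in A$ already gives consistency, but what matters is that $^c$ sends $A$ into $A$.

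Then I would check closure under $\cap$ and under $\cup$ by a short case analysis on whether each operand is distinguished or codistinguished. For intersection: two distinguished sets give a distinguished set by Fact 2; two codistinguished sets give a codistinguished set by the Corollary of Fact 2; and a distinguished together with a codistinguished gives a codistinguished set by Fact 3. For union: two distinguished sets give a distinguished set by Fact 2; two codistinguished sets give a codistinguished set by the Corollary of Fact 2; and the mixed case gives a distinguished set by the Corollary of Fact 3. In every case the result lies in $A$.

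There is no genuine obstacle here, since the three numbered facts together with their two corollaries have been arranged to cover precisely the six case-pairs that arise; the only thing to be careful about is not to forget the mixed cases in $\cap$ and $\cup$, and to state clearly that the Boolean identities themselves need no separate verification because they hold in $\mathcal{P}(X)$ and restrict to any subset closed under the operations. I would conclude by noting that $A$ is thus a subalgebra of $(\mathcal{P}(X), \cap, \cup, {}^c, \emptyset, X)$, which is the desired Boolean subalgebra.
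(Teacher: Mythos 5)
Your proof is correct and takes essentially the same route as the paper: the authors explicitly state that Facts 1--3 together with their corollaries ``supply a proof for Theorem 1,'' and your case analysis on the six operand pairs, plus the observation that the Boolean identities are inherited from $\mathcal{P}(X)$, is exactly how those facts are meant to be assembled.
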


Let us now take the sublattice $L$ of the algebra $A$ in the theorem, 
considering only the increasing distinguished sets and codistinguished sets.
It should be clear that this lattice is not closed for $^c$.
However, we have meet-complement $\neg$ and join-complement $D$ defining

\begin{tabular}{l}
$\neg U := (\downarrow U)^c$, \\ 

$DU := \uparrow (U^c)$. \\ 
\end{tabular}

\noindent Note that using Proposition \ref{DgivesBox} we have that $\Box$ exists in $L$.

\noindent Let us now see that $B$ does not exist for the increasing distinguished set $V$ defined by

\vspace{5pt}

 \begin{tabular}{lll}
 (a) & $V_i =(i,j)$, for all $j \in \mathbb{N}$ & if $i$ is even, \\

 (b) & $V_i =(i,j)$, for all $j \in \mathbb{N}-\{ 0,1 \}$ & if $i$ is odd. \\
 \end{tabular}

\vspace{5pt}

\noindent Now, as we have already seen, a subset of $X$ is Boolean iff it is decreasing.
So, any component of a Boolean element of $L$ will be both increasing and decreasing.
Now, it is easy to see that the odd components of $V$ are not decreasing, as the element $1$ is excluded. 
So, $V$ is not Boolean.
Which are the Boolean elements below $V$?
As the components of a Boolean element have to be both increasing and decreasing, 
they can only be the $\emptyset$ or $\mathbb{N}$.
In the case of the odd components, we are forced to put the $\emptyset$.
Now, in the case of the even components, 
if we choose a non-zero finite number of them to be $\mathbb{N}$ and the rest to be $\emptyset$,
we get a codistinguished sets. 
And we can do that in an infinite number of ways, obtaining a set Boolean elements that has no maximum.
So, $BV$ does not exist.

So, the given example proves that a meet-complemented lattice with $D$ need not have $B$.

\section{Meet-complemented lattices with possibility}

Let us consider a meet-complemented lattice ${\bf{A}}$ where there exists possibility defined as
$\Diamond a = min\{b \in A: \neg a \vee b = 1 \}$, for any $a \in A$.
It is equivalent to state both

\smallskip

{\bf{($\Diamond$I)}} $\neg a \vee \Diamond a = 1$ and

\smallskip

{\bf{($\Diamond$E)}} if $\neg a \vee b = 1$, then $\Diamond a \leq b$.

\begin{rem}
 Note that $\Diamond \emph{I}$ is equivalent to the reciprocal of $\Diamond \emph{E}$.
 So, $\Diamond \emph{I}$ and $\Diamond \emph{E}$ taken together are equivalent to saying that
 $\Diamond a \leq b$ iff $\neg a \vee b = 1$.
\end{rem}

Let us use the notation ${\mathbb{ML^\Diamond}}$ for the class of meet-complemented lattices with $\Diamond$.

\begin{lem}
Let $\bf{A} \in \mathbb{ML^\Diamond}$. Then, we have

 \noindent $\Diamond$-Monotonicity: if $a \leq b$, then $\Diamond a \leq \Diamond b$, for all $a, b \in A$.
\end{lem}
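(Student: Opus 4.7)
The plan is to mirror the proof of $\Box$-monotonicity given earlier, replacing the role played by $(\Box \textit{E})$ with $(\Diamond \textit{I})$ and the role played by $(\Box \textit{I})$ with $(\Diamond \textit{E})$, together with antimonotonicity of $\neg$ and Corollary \ref{cL1}.

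Concretely, suppose $a \leq b$ in $\bf{A}$. First I would apply antimonotonicity of $\neg$ to obtain $\neg b \leq \neg a$. Next, by $(\Diamond \textit{I})$ applied to $b$, we have $\neg b \vee \Diamond b = 1$. Now I would invoke Corollary \ref{cL1} to replace $\neg b$ by the larger element $\neg a$: since $\neg b \vee \Diamond b = 1$ and $\neg b \leq \neg a$, it follows that $\neg a \vee \Diamond b = 1$. Finally, $(\Diamond \textit{E})$ applied to $a$ (with witness $\Diamond b$) gives $\Diamond a \leq \Diamond b$, as required.

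There is no real obstacle: the argument is the order-dual counterpart of the $\Box$-monotonicity proof, and all the ingredients (antimonotonicity of $\neg$, Corollary \ref{cL1}, and the two defining clauses of $\Diamond$) are already available in the excerpt. The only thing to be careful about is the direction in which Corollary \ref{cL1} is applied, namely that one grows the disjunct $\neg b$ up to $\neg a$ rather than the other way around.
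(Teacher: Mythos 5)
Your proof is correct and is essentially identical to the paper's: both apply $(\Diamond\textit{I})$ to $b$, use $\neg$-antimonotonicity to pass from $\neg b \vee \Diamond b = 1$ to $\neg a \vee \Diamond b = 1$, and conclude by $(\Diamond\textit{E})$. The only difference is that you make the appeal to Corollary \ref{cL1} explicit where the paper leaves that step implicit.
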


\begin{proof}
 By ($\Diamond$I) we have $\neg b \vee \Diamond b = 1$. 
 From $a \leq b$, it follows that $\neg b \leq \neg a$. 
 So, $\neg a \vee \Diamond b = 1$, which, using ($\Diamond$E), gives $\Diamond a \leq \Diamond b$. 
\end{proof}

Using $\Diamond$-monotonicity and $\neg$-antimonotonicity we immediately get that
if $x \leq y$, then $\neg \Diamond y \leq \neg \Diamond x$,
which may be considered as an algebraic form of the S3-Schema in modal logic.

As in the case of $\Box$, we will be interested in modalities.
First note that we have the following equality.

\begin{lem} \label{D(DN)}
 Let us consider a lattice of $\mathbb{ML^\Diamond}$. Then,
 $\Diamond \neg \neg = \Diamond$.
\end{lem}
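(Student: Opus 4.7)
The plan is to prove the two inequalities separately, each by a direct appeal to one of the defining conditions of $\Diamond$.

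For the easy direction $\Diamond a \leq \Diamond \neg \neg a$, I would invoke (DN), which gives $a \leq \neg \neg a$, and then apply $\Diamond$-monotonicity (the lemma proved immediately above).

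For the reverse inequality $\Diamond \neg \neg a \leq \Diamond a$, the plan is to use ($\Diamond$E) applied to the element $\neg \neg a$ in place of $a$, with $\Diamond a$ in the role of $b$. This requires verifying the hypothesis $\neg(\neg \neg a) \vee \Diamond a = 1$. Here the key observation is that by (TN), $\neg \neg \neg a = \neg a$, so the needed hypothesis reduces to $\neg a \vee \Diamond a = 1$, which is precisely ($\Diamond$I). Hence ($\Diamond$E) yields $\Diamond \neg \neg a \leq \Diamond a$, as desired.

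There is no real obstacle; the whole argument hinges on the triple-negation collapse $\neg \neg \neg a = \neg a$ recalled earlier in the paper, which converts the $\neg \neg a$-version of ($\Diamond$I) into the $a$-version. Combining the two inequalities gives $\Diamond \neg \neg = \Diamond$ as modalities.
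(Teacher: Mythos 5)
Your proposal is correct and follows essentially the same route as the paper: the paper likewise obtains $\neg\neg\neg a \vee \Diamond a = 1$ from ($\Diamond$I) via (TN) and then applies ($\Diamond$E) to get $\Diamond\neg\neg \leq \Diamond$, with the converse inequality from (DN) and $\Diamond$-monotonicity. No gaps.
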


\begin{proof}
 By ($\Diamond$I), $\neg a \vee \Diamond a = 1$.
 So, by (TN) we get $\neg \neg \neg a \vee \Diamond a = 1$.
 Hence, by ($\Diamond$E), $\Diamond \neg \neg \leq \Diamond$.
 The other inequality follows by (DN) and $\Diamond$-monotonicity.
\end{proof}

Regarding modalities of length $\leq 4$ we have the following result.

\begin{lem} \label{dm}
 Let us consider a lattice of $\mathbb{ML^\Diamond}$. Then,
 (i) $\circ \leq \neg \neg$,
 (ii) $\Diamond \leq \neg \neg \Diamond$,
 (iii) $\Diamond \neg \leq \neg \neg \Diamond \neg$,
 (iv) $\neg \Diamond \leq \neg$,
 (v) $\neg \Diamond \neg \leq \neg \neg$,
 (vi) $\neg \leq \neg \neg \Diamond \neg$,
 (vii) $\neg \Diamond \neg \Diamond \leq \neg \neg \Diamond$,
 (viii) $\Diamond \neg \Diamond \neg \leq \Diamond$,
 (ix) $\Diamond \leq \Diamond^{2}$,
 (x) $\Diamond^{2} \leq \Diamond^{3}$,
 (xi) $\Diamond^{3} \leq \Diamond^{4}$,
 (xii) $\Diamond \neg \leq \Diamond \Diamond \neg$,
 (xiii) $\Diamond \neg \Diamond \leq \Diamond \Diamond \neg \Diamond$,
 (xiv) $\neg \Diamond \Diamond \leq \neg \Diamond$,
 (xv) $\neg \Diamond \Diamond \neg \leq \neg \Diamond \neg$,
 (xvi) $\Diamond \neg \Diamond \Diamond \leq \Diamond \neg \Diamond$,
 (xvii) $\neg \neg \Diamond \leq \neg \neg \Diamond \Diamond$,
 (xviii) $\Diamond \neg \Diamond \leq \neg$,
 (xix) $\Diamond \neg \Diamond \Diamond \leq \neg \Diamond$,
 (xx) $\Diamond \Diamond \neg \Diamond \leq \Diamond \neg$,
 (xxi) $\neg \neg \leq \neg \Diamond \neg \Diamond$.
\end{lem}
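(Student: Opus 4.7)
The lemma presents twenty-one inequalities, but they fall into a handful of essentially distinct arguments; everything else is obtained by substituting $\Diamond a$, $\neg a$, $\neg \Diamond a$, etc., for $a$, or by applying $\Diamond$-monotonicity, $\neg$-antimonotonicity, (DN), (TN), or Lemma \ref{D(DN)}. My plan is to isolate the two genuinely new inequalities from which most of the list is derived, to dispatch the ``($\Diamond$I) plus Lemma \ref{lmc}'' block, and to handle the single remaining case by hand.

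Items (i)--(iii) are immediate consequences of (DN) (item (i) is just (DN), and the other two are (DN) applied after $\Diamond$ or $\Diamond \neg$). The block (iv)--(vii) is obtained by a single application of Lemma \ref{lmc}(vi) to ($\Diamond$I): since $\Diamond a \vee \neg a = 1$, that lemma yields $\neg \Diamond a \leq \neg a$, which is (iv); (v) is (iv) with $\neg a$ in place of $a$; (vi) follows from (v) by $\neg$-antimonotonicity together with (TN); and (vii) is (iv) with $\neg \Diamond a$ in place of $a$.

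The first core step is (ix), $\Diamond \leq \Diamond \Diamond$. Applying Lemma \ref{lmc}(v) to ($\Diamond$I) in the form $\Diamond a \vee \neg a = 1$ gives $a \leq \neg \neg \Diamond a$; then $\Diamond$-monotonicity followed by Lemma \ref{D(DN)} yields $\Diamond a \leq \Diamond \neg \neg \Diamond a = \Diamond \Diamond a$. From (ix) I read off (x)--(xiii) by the substitutions $a \mapsto \Diamond a,\ \Diamond^2 a,\ \neg a,\ \neg \Diamond a$; (xiv), (xv), and (xvii) by one or two applications of $\neg$-antimonotonicity; and (xvi) by applying $\Diamond$-monotonicity to (xiv).

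The second core step is (xviii), $\Diamond \neg \Diamond \leq \neg$. To apply ($\Diamond$E) with $\neg \Diamond a$ in place of $a$ and $\neg a$ in place of $b$, I need $\neg \neg \Diamond a \vee \neg a = 1$; this comes from ($\Diamond$I) combined with $\Diamond a \leq \neg \neg \Diamond a$ (which is (DN)) via Corollary \ref{cL1}. From (xviii) follow (xix) by substitution, (xx) by $\Diamond$-monotonicity, and (xxi) by $\neg$-antimonotonicity. The only case not yet covered is (viii), $\Diamond \neg \Diamond \neg \leq \Diamond$: the plan is to apply Lemma \ref{lmc}(v) to ($\Diamond$I) instantiated at $\neg a$ to obtain $\neg a \leq \neg \neg \Diamond \neg a$, combine this with $\neg a \vee \Diamond a = 1$ via Corollary \ref{cL1} to get $\neg \neg \Diamond \neg a \vee \Diamond a = 1$, and then conclude by ($\Diamond$E). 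The main obstacle is really just bookkeeping — choosing the correct substitution and the right auxiliary clause of Lemma \ref{lmc} in each case — since once (ix) and (xviii) are in hand, no further creative ingredient is needed.
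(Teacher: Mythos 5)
Your proof is correct and follows essentially the same route as the paper: (DN) for (i)--(iii), Lemma \ref{lmc} plus ($\Diamond$I) for the (iv)--(vii) block, and the two key steps (ix) and (xviii) feeding everything else via substitution, $\Diamond$-monotonicity, and $\neg$-antimonotonicity; your argument for (viii) is exactly the paper's. The only (harmless) deviation is in (ix), where you derive $a \leq \neg\neg\Diamond a$ and use $\Diamond\neg\neg = \Diamond$, while the paper instead applies ($\Diamond$I) at $\Diamond a$ together with (iv) and ($\Diamond$E).
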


\begin{proof}
 (i) is (DN).

 \noindent (ii) and (iii) are particular cases of (i) substituting $\Diamond$, $\Diamond \neg$ , and $\Diamond \Diamond \neg$, respectively.

 \noindent (iv) follows by Lemma \ref{lmc}(iii) and ($\Diamond$I).

 \noindent (v) is a particular case of (iv) using $\neg$.

 \noindent (vi) follows from (v) by $\neg$-antimonotonicity and (TN).

 \noindent (vii) is a particular case of (v) using $\Diamond$.

 \noindent (viii) follows by ($\Diamond$I), (vi), and ($\Diamond$E).

 \noindent (ix) follows as, having $\neg \Diamond \vee \Diamond \Diamond = 1$ by ($\Diamond$I),
 we may apply (iv) and ($\Diamond$E).

 \noindent (x), (xi), (xii), and (xiii) are particular cases of (ix) using $\Diamond$, $\Diamond \Diamond$,
 $\neg$, and $\neg \Diamond$, respectively.

 \noindent (xiv) follows from (ix) by $\neg$-antimonotonicity.

 \noindent (xv) is a partocular case of (xiv) using $\neg$.

 \noindent (xvi) follows from (xiv) by $\Diamond$-monotonicity.

 \noindent (xvii) follows from (xiv) by $\neg$-antimonotonicity.

 \noindent (xviii) follows by ($\Diamond$I), (DN), and ($\Diamond$E).

 \noindent (xix) is a particular case of (xviii) using $\Diamond$.

 \noindent (xx) follows from (xviii) by $\Diamond$-monotonicity.

 \noindent (xxi) follows from (xviii) by $\neg$-antimonotonicity.
\end{proof}

\begin{figure} [ht]
\begin{center}

\begin{tikzpicture}

    \tikzstyle{every node}=[draw, circle, fill=white, minimum size=4pt, inner sep=0pt, label distance=1mm]

    \draw (0,0) node (1) [label=right:$\Diamond \Diamond a$] {}
        -- ++(225:1cm) node (2) [] {}
        -- ++(225:1cm) node (b) [label=left:$\Diamond a$] {}
        -- ++(315:1cm) node (4) [] {}
        -- ++(45:1cm) node (5) [] {}
        -- ++(45:1cm) node (6) [] {}
        -- ++(315:1cm) node (na) [label=right:$\Diamond \neg \Diamond a$] {}
        -- ++(225:1cm) node (8) [] {}

        -- ++(270:1cm) node (nb) [label=right:$\neg \Diamond a$] {}
        -- ++(135:1cm) node (10) [] {}
        -- ++(225:1cm) node (a) [label=left:$a$] {}
        -- ++(315:1cm) node (12) [label=right:$\neg \Diamond \Diamond a$] {}
        -- ++(90:1cm) node (13) [] {}
        -- (4);


        \draw (1) -- (6);
        \draw (2) -- (5);
        \draw (8) -- (5);
        \draw (12) -- (nb);
        \draw (4) -- (a);
        \draw (8) -- (13);
        \draw (5) -- (10);

\end{tikzpicture}

\end{center}
\caption{\label{d13} Behaviour of $\Diamond$ and $\neg$ in an atom of $13$}
\end{figure}
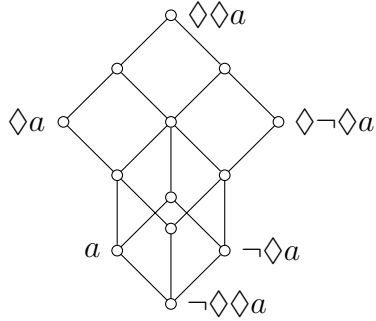

We now give the nodes of the algebras proving that none of the inverse inequalities of Lemma \ref{dm} holds:
For (i): $m$.
For (iv)-(vi), (xviii), (xx), and (xxi): $l$.
For (ii), (ix), and (xii): $r_c$.
For (iii): atom $r_b$ of $R_8$.
For (vii), (viii), (xiii), (xiv), (xvi), (xvii), and (xix): check the behaviour of $a$ in Figure \ref{d13}.
For (xv): check the behaviour of element labeled $\Diamond a$ in Figure \ref{d13}.
For (x)-(xi) consider representation theory as explained in Section $5$.

We leave to the reader to check the cases where the elements are non-comparable.

\begin{figure} [ht]
\begin{center}

\begin{tikzpicture}

    \tikzstyle{every node}=[draw, circle, fill=white, minimum size=4pt, inner sep=0pt, label distance=1mm]

    \draw (0,0)        node (nDDn) [label=right:$\neg \Diamond \Diamond \neg$] {}
        -- ++(135:1cm) node (nDn)  [label=right:$\neg \Diamond \neg$] {}
        -- ++(135:1cm) node (nn)   [label=left:$\neg \neg$] {}
        -- ++(225:1cm) node (c)    [label=left:$\circ$] {}
        -- ++(45:2cm)  node (nDnD) [label=left:$\neg \Diamond \neg \Diamond$] {}
        -- ++(45:1cm)  node (nnD)  [label=left:$\neg \neg \Diamond$] {}
        -- ++(45:1cm)  node (nnDD) [label=left:$\neg \neg \Diamond \Diamond$] {}
        -- ++(225:1cm) node (nnD)  [] {}
        -- ++(315:1cm) node (D)    [label=right:$\Diamond$] {}
        -- ++(315:1cm) node (DnDn) [label=right:$\Diamond \neg \Diamond \neg$] {}
        -- ++(135:1cm) node (D)    [] {}
        -- ++(45:1cm)  node (DD)   [label=right:$\Diamond^2$] {}
        -- ++(45:1cm)  node (DDD)  [label=right:$\Diamond^3$] {}
        -- ++(45:1cm)  node (DDDD) [label=right:$\Diamond^4$] {};

\end{tikzpicture}

\end{center}
\caption{\label{pmd} Positive modalities for $\neg$ and $\Diamond$ with maximum lenght $=4$}
\end{figure}
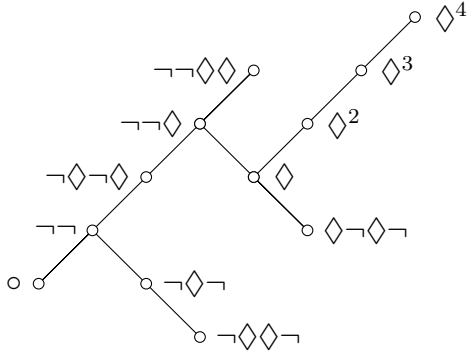

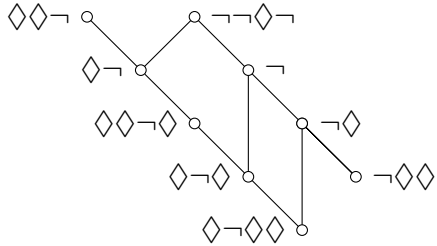
\begin{figure} [ht]
\begin{center}

\begin{tikzpicture}

    \tikzstyle{every node}=[draw, circle, fill=white, minimum size=4pt, inner sep=0pt, label distance=1mm]


    \draw (0,0) node (1) [label=left:$\Diamond \Diamond \neg$] {}
        -- ++(315:1cm) node (2) [label=left:$\Diamond \neg$] {}
        -- ++(45:1cm) node  (3) [label=right:$\neg \neg \Diamond \neg$] {}
        -- ++(315:1cm) node (4) [label=right:$\neg$] {}
        -- ++(315:1cm) node (5) [label=right:$\neg \Diamond$] {}
        -- ++(315:1cm) node (6) [label=right:$\neg \Diamond \Diamond$] {}
        -- ++(135:1cm) node (5a) [] {}

        -- ++(270:1.4142cm) node (7) [label=left:$\Diamond \neg \Diamond \Diamond$] {}
        -- ++(135:1cm) node (8) [label=left:$\Diamond \neg \Diamond$] {}
        -- ++(135:1cm) node (9) [label=left:$\Diamond \Diamond \neg \Diamond$] {}
        -- (2);


        \draw (4) -- (8);

\end{tikzpicture}

\end{center}
\caption{\label{nmd} Negative modalities for $\neg$ and $\Diamond$ with maximum lenght $=4$}
\end{figure}

Regarding the interaction of $\Diamond$ with $\wedge$, $\vee$, $\neg$, $0$, and $1$, we have the following facts.

\begin{prop} \label{D}
 Let $\bf{A} \in \mathbb{ML^\Diamond}$ and let $a, b \in A$. Then,
 (i) $\Diamond (a \wedge b) \leq \Diamond a \wedge \Diamond b$,
 (ii) $\Diamond a \vee \Diamond b \leq \Diamond (a \vee b)$,
 (iii) $\Diamond \neg a \vee \Diamond \neg b \leq \Diamond \neg (a \wedge b)$,
 (iv) $\Diamond \neg (a \wedge b) = \Diamond ( \neg a \vee \neg b)$,
 (v) $\Diamond a = 0$ iff $a = 0$,
 (vi) $\Diamond 1 = 1$, 
 (vii) $\neg \Diamond 0 = 1$. 
\end{prop}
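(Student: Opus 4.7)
The plan is to treat all seven items using only $\Diamond$-monotonicity, the defining conditions $(\Diamond\mathrm{I})$ and $(\Diamond\mathrm{E})$, together with the De Morgan facts already established in Section~1 (in particular the equality $\neg(a \vee b) = \neg a \wedge \neg b$, the inequality $\neg a \vee \neg b \leq \neg(a \wedge b)$, and the identity $\neg\neg(a \wedge b) = \neg\neg a \wedge \neg\neg b$). I will proceed in the given order, grouping parts that share the same idea.

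Items (i) and (ii) are immediate from $\Diamond$-monotonicity applied, respectively, to $a \wedge b \leq a, b$ and to $a, b \leq a \vee b$. Item (iii) follows by chaining: (ii) applied to $\neg a, \neg b$ gives $\Diamond \neg a \vee \Diamond \neg b \leq \Diamond(\neg a \vee \neg b)$, and then $\Diamond$-monotonicity composed with the De Morgan inequality $\neg a \vee \neg b \leq \neg(a \wedge b)$ supplies the second step.

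Item (iv) is the only point where I expect to need anything beyond routine monotonicity, and it is the main obstacle. The direction $\Diamond(\neg a \vee \neg b) \leq \Diamond\neg(a \wedge b)$ is once more just $\Diamond$-monotonicity against De Morgan. For the reverse direction I plan to combine the two Section~1 facts $\neg(\neg a \vee \neg b) = \neg\neg a \wedge \neg\neg b$ and $\neg\neg a \wedge \neg\neg b = \neg\neg(a \wedge b)$, which together identify $\neg(\neg a \vee \neg b)$ with $\neg\neg(a \wedge b)$. Then $(\Diamond\mathrm{I})$ applied to $\neg a \vee \neg b$ reads $\neg\neg(a \wedge b) \vee \Diamond(\neg a \vee \neg b) = 1$, and $(\Diamond\mathrm{E})$ applied at $\neg(a \wedge b)$ gives $\Diamond\neg(a \wedge b) \leq \Diamond(\neg a \vee \neg b)$.

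The remaining items are essentially boundary-value checks. For (v), the implication $a=0 \Rightarrow \Diamond a=0$ follows from $(\Diamond\mathrm{E})$ because $\neg 0 = 1$ makes $\neg 0 \vee 0 = 1$; conversely $\Diamond a = 0$ combined with $(\Diamond\mathrm{I})$ forces $\neg a = 1$, and Lemma~\ref{lmc}(vii) then gives $a = 0$. For (vi), $(\Diamond\mathrm{I})$ at $1$ reads $0 \vee \Diamond 1 = 1$, so $\Diamond 1 = 1$. Finally (vii) is (v) combined with $\neg 0 = 1$.
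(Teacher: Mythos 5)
Your proposal is correct and follows essentially the same route as the paper: monotonicity for (i)--(iii), De Morgan plus the $\Diamond$-rules for (iv), and Lemma \ref{lmc}(vii) for the boundary cases. The only cosmetic difference is in the hard direction of (iv), where the paper invokes the already-proved identity $\Diamond\neg\neg=\Diamond$ (Lemma \ref{D(DN)}) after noting $\neg(a\wedge b)=\neg\neg(\neg a\vee\neg b)$, whereas you re-derive the same content directly from ($\Diamond$I) and ($\Diamond$E); both are valid.
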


\begin{proof}
 (i) follows from $a \wedge b \leq a, b$ using $\Diamond$-monotonicity and ($\wedge$I).

 \noindent (ii) follows from $a, b \leq a \vee b$ using $\Diamond$-monotonicity and ($\vee$E).

 \noindent (iii) By $\neg$-antimonotonicity, we have both $\neg a \leq \neg (a \wedge b)$
 and $\neg b \leq \neg(a \wedge b)$.
 So, by $\Diamond$-monotonicity, we get both
 $\Diamond \neg a \leq \Diamond \neg (a \wedge b)$ and $\Diamond \neg b \leq \Diamond \neg (a \wedge b)$.
 Then, by a property of $\vee$, we finally get $\Diamond \neg a \vee \Diamond \neg b \leq \Diamond \neg (a \wedge b)$.

 \noindent (iv) We have $\neg (a \wedge b) \leq \neg \neg (\neg a \vee \neg b)$.
 Using $\Diamond$-monotonicity, we get $\Diamond \neg (a \wedge b) \leq \Diamond \neg \neg (\neg a \vee \neg b)$.
 So, as $\Diamond \neg \neg = \Diamond$, we get $\Diamond \neg (a \wedge b) \leq \Diamond (\neg a \vee \neg b)$.
 The other inequality follows from $\neg a \vee \neg b \leq \neg (a \wedge b)$ by $\Diamond$-monotonicity.

 \noindent (v) is easy to see using Lemma 1(vii).

 \noindent (vi) and (vii) are immediate.
\end{proof}

\begin{rem}
 Note that something analogous to (iv) in the previous proposition is not the case for $\Box$,
 i.e., we do not have $\Box \neg(a \wedge b)= \Box ( \neg a \vee \neg b)$,
 as can be seen taking the atoms in the lattice $2^{2} \oplus 1$.
 Also, the inequality $\Diamond a \leq \Diamond (a \vee b)$
 is useful to prove that monotonicity of $\Diamond$ is given by an identity.
\end{rem}

\begin{figure} [ht]
\begin{center}

\begin{tikzpicture}

  \tikzstyle{every node}=[draw, circle, fill=white, minimum size=4pt, inner sep=0pt, label distance=1mm]

    \draw (0,0)

           ++(45:.5cm)	node (2) [label=right:$a$] {}
        -- ++(135:1cm)	node (1) [] {}
        -- ++(225:1cm)	node (na) [label=left:$\neg a$] {}
        -- ++(315:1cm)	node (0) [] {};

    \draw [dotted, very thick] (2)--(0,0);

\end{tikzpicture}

\end{center}
\caption{\label{lwD}A meet-complemented lattice where $\Diamond$ does not exist}
\end{figure}
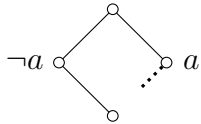

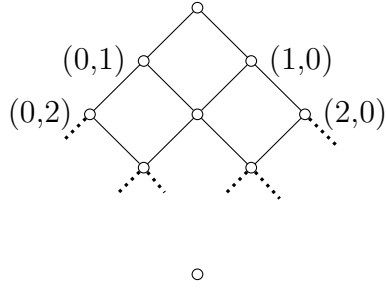
\begin{figure} [ht]
\begin{center}

\begin{tikzpicture}

  \tikzstyle{every node}=[draw, circle, fill=white, minimum size=4pt, inner sep=0pt, label distance=1mm]

    \draw (0,0)

	   ++(45:.5cm)	node (02) [label=left:(0{,}2)] {}
        -- ++(45:1)	node (01) [label=left:(0{,}1)] {}
        -- ++(45:1)	node (00) [] {}
        -- ++(315:1)	node (10) [label=right:(1{,}0)] {}
        -- ++(315:1)	node (20) [label=right:(2{,}0)] {}
        -- ++(225:1)	node (21) [] {}
        -- ++(135:1)	node (11) [] {}
        -- ++(225:1)	node (12) [] {};

    \draw (02)--(12);
    \draw (01)--(11);
    \draw (10)--(11);

    \draw [dotted, very thick] (02)--(0,0);
    \draw [dotted, very thick] (12)--(315:1);
    \draw [dotted, very thick] (12)--(333:1.5);
    \draw [dotted, very thick] (21)--(342:2.25);
    \draw [dotted, very thick] (21)--(345:2.95);
    \draw [dotted, very thick] (20)--(359:3.6);

    \draw (315:2.5) node [] {};

\end{tikzpicture}

\end{center}
\caption{\label{G} A G\"{o}del algebra where $\Diamond$ does not exist}
\end{figure}

There are meet-complemented lattices where $\Diamond$ does not exist for some element.
For instance, $\Diamond a$ does not exist in the non-distributive meet-complemented lattice given in Figure \ref{lwD},
where the reader has to imagine the existence of denumerable elements downwards from node $a$.
Also, $\Diamond$ does not exist in the G\"{o}del algebra given in Figure \ref{G}.

In particular, observe that $\Diamond$ exists in the pentagon, where, taking the coatom that is not an atom,
it can be seen that the T-property $a \leq \Diamond a$ is not the case
(taking the atom-coatom it is also neither the case that $\neg \leq \Diamond \neg$ nor that $\neg \Diamond \leq \Diamond \neg$).
We also do not have the S4-Property $\Diamond \Diamond \leq \Diamond$,
which can be seen in the lattice $R_8$ (see Figure \ref{rn8}) taking the atom $r_a$.

We have seen that we have $\Diamond a \vee \Diamond b \leq \Diamond (a \vee b)$.
In the lattice in Figure \ref{aa}, where $\Diamond$ does exist,
it may be seen that the reciprocal does not hold, as we have that
$1=\Diamond (a \vee b) \nleq \Diamond a \vee \Diamond b=a \vee b=c$.
In the same lattice it may be seen that the reciprocal of
$\Diamond \neg a \vee \Diamond \neg b \leq \Diamond \neg (a \wedge b)$, proved in Proposition \ref{D}(iii),
does not hold, as we have that
$1 = \Diamond \neg (d \wedge e) \nleq \Diamond \neg d \vee \Diamond \neg e = \Diamond e \vee \Diamond d = b \vee a = c$.
Again in the same lattice, taking, for instance,  the coatom $d$,
it may be seen that it is not the case that $\neg \Diamond \leq \Diamond \neg$,
as $\neg \Diamond d = \neg a = e \nleq b = \Diamond e = \Diamond \neg d$.

Taking the meet-irreducible atom $r_a$ in $R_8$, it may be seen that it is not the case that
$\neg \neg \Diamond \leq \Diamond$.

Also, by $\Diamond$-monotonicity, we have $\Diamond (a \wedge b) \leq \Diamond a \wedge \Diamond b$,
which may be considered as a form of the well-known S2-schema in modal logic.
However, the reciprocal is not the case considering the lattice $2^{2}\oplus1$.
The same example shows that neither $\Diamond a \wedge \Diamond \neg a \leq \Diamond (a \wedge \neg a)$ nor
$\Diamond a \wedge \Diamond \neg a = 0$ are the case.

Let us use the notation $\mathbb{ML^\Diamond}$ for the class of meet-complemented lattices expanded with possibility.

It is natural to ask whether $\mathbb{ML^\Diamond}$ is an equational class.
Let us see that it is not even a quasi-equational class considering the lattice $R_8$ (see Figure \ref{rn8}).
Indeed, consider the six-element lattice {\bf{L}} that results from $R_8$ indentifying $e$ with the top $1$ and $d$ with $r_c$.
Then, define the obvious homomorphism $h$ from $R_8$ to {\bf{L}}.
It is easily seen that $\Diamond$ is not preserved under $h$, as, for instance, $h\Diamond c=h1 \neq hc = \Diamond hc$.


Now, let us compare $\Diamond$ with $D\neg$.

\begin{prop}
 Let $\bf{A} \in \mathbb{ML^\Diamond}$.
 If $D$ exists in $A$, then $\Diamond$ also exists in $A$ with $\Diamond = D\neg$.
\end{prop}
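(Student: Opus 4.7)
The plan is to imitate the proof of Proposition \ref{DgivesBox}, showing that $D\neg a$ satisfies the two defining conditions ($\Diamond$I) and ($\Diamond$E) of possibility. Since $\Diamond a$ is defined as the minimum of the set $\{b \in A : \neg a \vee b = 1\}$, it suffices to verify that $D\neg a$ lies in this set and is a lower bound for it. Both facts come directly from the characterization of $D$ recalled just before Proposition \ref{DgivesBox}, namely that $x \vee Dx = 1$ and that $Dx \leq b$ iff $x \vee b = 1$.

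First I would verify ($\Diamond$I): instantiating $x \vee Dx = 1$ at $x = \neg a$ gives immediately $\neg a \vee D\neg a = 1$, so $D\neg a$ belongs to the set in question. Next I would verify ($\Diamond$E): assume $\neg a \vee b = 1$; then by the defining property of $D$ applied to $\neg a$ we get $D\neg a \leq b$. Together these show that $D\neg a$ is the minimum element of $\{b \in A : \neg a \vee b = 1\}$, hence $\Diamond a$ exists and equals $D\neg a$.

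There is essentially no obstacle: the argument is a one-line unwinding of definitions. The only thing worth pointing out is that the hypothesis should really be read as $\bf{A} \in \mathbb{ML}$ (not $\mathbb{ML}^\Diamond$), since the conclusion asserts the existence of $\Diamond$; under that reading the proof is parallel to Proposition \ref{DgivesBox}, merely with $\neg$ inserted before the argument of $D$ and the roles of the two sides of the equation $x \vee Dx = 1$ left as they are, rather than negated as in the $\Box$ case.
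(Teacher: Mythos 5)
Your proof is correct and coincides with the paper's own argument: both verify ($\Diamond$I) by instantiating $x \vee Dx = 1$ at $x = \neg a$, and ($\Diamond$E) directly from the characterization $Dx \leq b$ iff $x \vee b = 1$. Your remark that the hypothesis is more naturally read as $\mathbf{A} \in \mathbb{ML}$ is also apt, given that the conclusion asserts the existence of $\Diamond$.
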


\begin{proof}
 Firstly, we have $\neg a \vee D \neg a = 1$.
 Secondly, suppose $\neg a \vee b = 1$. Then, $D \neg a \leq b$.
\end{proof}

On the other hand, the infinite lattice $1 \oplus (\mathbb{N} \times \mathbb{N})^{\partial}$ in Figure \ref{G}
is a lattice with $\Diamond$ such that $D$ does not exist.

\begin{prop}
 Let $\bf{A} \in \mathbb{ML^\Diamond}$. Then,
 if $D$ exists in $A$, then $\Diamond \neg \leq D$.
\end{prop}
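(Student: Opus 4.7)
The target inequality $\Diamond\neg a \leq Da$ is most naturally attacked through the elimination rule ($\Diamond$E), which says $\Diamond c \leq b$ whenever $\neg c \vee b = 1$. Taking $c = \neg a$, it suffices to exhibit the single identity $\neg\neg a \vee Da = 1$. So my whole plan is to reduce everything to producing this equation.

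To produce $\neg\neg a \vee Da = 1$, I would first note that $Da \leq Da$ together with the defining biconditional for $D$ (namely $Dx \leq y \Leftrightarrow x \vee y = 1$) yields the basic identity $a \vee Da = 1$. Then, since $a \leq \neg\neg a$ by (DN), Corollary \ref{cL1} lifts $a \vee Da = 1$ to $\neg\neg a \vee Da = 1$, which is exactly what is needed.

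A slicker alternative goes through the previously established fact that when $D$ exists one has $\Diamond = D\neg$. Combined with antimonotonicity of $D$ (immediate from the defining biconditional, since enlarging $x$ widens the set $\{y : x \vee y = 1\}$ and hence lowers its minimum), this gives $\Diamond\neg a = D\neg\neg a \leq Da$ in one line from $a \leq \neg\neg a$. Either route is essentially a two-liner.

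I do not anticipate any real obstacle. The only pitfall worth flagging is to resist trying to move $\neg$ across $\Diamond$ to write something like $\Diamond\neg a \leq \neg\Box a$ or to invoke De Morgan on $\Diamond$: neither of those moves is available at this level of generality, which is precisely why routing through ($\Diamond$E) or through the explicit representation $\Diamond = D\neg$ is the right move.
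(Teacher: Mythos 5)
Your first route is correct and is essentially the paper's own proof: the paper likewise derives $\neg\neg a \vee b = 1$ from $a \vee b = 1$ via (DN) and then applies ($\Diamond$E) at the argument $\neg a$, only phrasing it as ``$\Diamond\neg a$ is a lower bound of $\{b : a \vee b = 1\}$'' rather than instantiating directly at $b = Da$. Your slicker alternative via $\Diamond = D\neg$ and antimonotonicity of $D$ is also sound, but the main argument matches the paper.
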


\begin{proof}
 It is enough to prove that if $a \vee b = 1$, then $\Diamond \neg a \leq b$.
 Now, from $a \vee b = 1$, it follows that $\neg \neg a \vee b = 1$.
 Using ($\Diamond$E), it follows that $\Diamond \neg a \leq b$.
\end{proof}

Reciprocally to what happens with Boolean elements and $\Box$, we have the following fact.

\begin{lem} \label{lDB}
 Let $\bf{A} \in \mathbb{ML^\Diamond}$ and $a \in A$.
 Then, $a$ is Boolean iff $\Diamond a \leq a$.
\end{lem}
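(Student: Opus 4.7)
The proof should be a close dual of Lemma \ref{lBB}, relying only on the defining clauses of $\Diamond$ together with Corollary \ref{cL1}. The plan is to prove each implication directly from ($\Diamond$I) and ($\Diamond$E), without invoking any of the more delicate lemmas about modalities.

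For the forward direction, I would assume $a$ is Boolean, i.e.\ $a \vee \neg a = 1$. Rewriting as $\neg a \vee a = 1$ (using $\vee$-commutativity) puts $a$ itself in the set $\{b \in A : \neg a \vee b = 1\}$ whose minimum is $\Diamond a$. Hence by ($\Diamond$E) we immediately get $\Diamond a \leq a$.

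For the converse, assume $\Diamond a \leq a$. By ($\Diamond$I) we always have $\neg a \vee \Diamond a = 1$. Since $\Diamond a \leq a$, Corollary \ref{cL1} (which says that if $x \vee y = 1$ and $y \leq z$ then $x \vee z = 1$) upgrades this to $\neg a \vee a = 1$, that is, $a \vee \neg a = 1$. Thus $a$ is Boolean.

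There is no real obstacle here: the whole argument is a symmetric reading of the characterizing clauses of $\Diamond$, exactly mirroring how Lemma \ref{lBB} used ($\Box$E) and ($\Box$I). The only step worth flagging is the appeal to Corollary \ref{cL1} in the second implication, which is what lets us substitute $a$ for the larger element $\Diamond a$ in the join equal to $1$.
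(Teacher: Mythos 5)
Your proof is correct and follows essentially the same route as the paper, which simply notes that the equivalence follows immediately from ($\Diamond$I) and ($\Diamond$E). Your explicit appeal to Corollary \ref{cL1} in the converse direction just spells out the small step the paper leaves implicit.
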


\begin{proof}
 It follows immediately using ($\Diamond$I) and ($\Diamond$E).
\end{proof}

Regarding $B$, we may say that it exists in the lattice in Figure \ref{lwD},
though $\Diamond$ does not exist at node $a$, as already said.

We may also consider the dual of $B$.
It may be seen that such operation may be defined as $\neg B \neg$.
It may be easily seen that if $B$ exists in a meet-complemented lattice with $\Diamond$,
we have that $\Diamond \leq \neg B \neg$.

\section{Necessity and possibility together}

Let us consider a meet-complemented lattice with necessity as in Section 2 and add possibility,
as defined in the previous section.
Let us use the notation $\mathbb{ML^{\Box \Diamond}}$
for the class $\mathbb{ML^\Box}$ expanded with possibility.

Some properties that involve both modal operators are the following, where we use ``B'' for Brouwerian and ``A'' for Adjunction.
By the way, so called ``Brouwerian'' properties seem to have originated by Becker in 1930 (see \cite{Beck}).

\begin{lem} \label{BD}
 Let {\bf{A}}$\in \mathbb{ML^{\Box \Diamond}}$ and let $a, b \in A$. Then,

 (i) $\neg \neg \leq \Box \Diamond$,

 (ii) $\Box \leq \Box \Diamond$,

 {\bf{(B1)}} $\circ \leq \Box \Diamond$,

 {\bf{(B2)}} $\Diamond \Box \leq \circ$,

 {\bf{(A)}} $\Diamond a \leq b$ iff $a \leq \Box b$,

 (iii) $\Diamond \Box \Diamond = \Diamond$ and $\Box \Diamond \Box = \Box$.
\end{lem}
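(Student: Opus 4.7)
The plan is to prove the six items essentially in the stated order, exploiting that the later parts follow from the earlier ones through the adjunction structure relating $\Box$ and $\Diamond$.

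First I would establish (i) $\neg \neg a \leq \Box \Diamond a$ directly from the definitions. Starting from ($\Diamond$I) we have $\neg a \vee \Diamond a = 1$, which by (TN) can be rewritten as $\Diamond a \vee \neg(\neg \neg a) = 1$. Applying ($\Box$I) to this instance gives $\neg \neg a \leq \Box \Diamond a$. For (ii), the key observation is the global inequality $\Box \leq \neg \neg$: by $\Box$-monotonicity and (DN) we have $\Box a \leq \Box \neg \neg a$, and Lemma \ref{lmcln}(vii) gives $\Box \neg \neg a \leq \neg \neg a$. Chaining with (i) yields $\Box a \leq \Box \Diamond a$. Then (B1) is immediate, since (DN) gives $a \leq \neg \neg a$ and (i) continues the chain to $\Box \Diamond a$.

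Next, for (B2), I would apply ($\Diamond$E) to the instance $\neg \Box a \vee a = 1$ of ($\Box$E), which directly yields $\Diamond \Box a \leq a$. The heart of the argument is (A). For the $\Leftarrow$ direction, suppose $a \leq \Box b$; by $\neg$-antimonotonicity $\neg \Box b \leq \neg a$, and ($\Box$E) gives $b \vee \neg \Box b = 1$, so Corollary \ref{cL1} upgrades this to $b \vee \neg a = 1$, and ($\Diamond$E) finishes with $\Diamond a \leq b$. For the $\Rightarrow$ direction, suppose $\Diamond a \leq b$; then ($\Diamond$I) gives $\neg a \vee \Diamond a = 1$, and Corollary \ref{cL1} yields $\neg a \vee b = 1$, i.e.\ $b \vee \neg a = 1$; the alternative form of ($\Box$I) stated in the remark (if $x \vee y = 1$ then $\neg y \leq \Box x$) produces $\neg \neg a \leq \Box b$, and (DN) closes the gap to $a \leq \Box b$.

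Finally, (iii) falls out as a formal consequence of (B1), (B2), and monotonicity. For $\Diamond \Box \Diamond = \Diamond$: the inequality $\Diamond \Box \Diamond a \leq \Diamond a$ is (B2) applied at $\Diamond a$, while $\Diamond a \leq \Diamond \Box \Diamond a$ is obtained by applying $\Diamond$-monotonicity to (B1) at $a$. The equality $\Box \Diamond \Box = \Box$ is symmetric: (B1) at $\Box a$ gives $\Box a \leq \Box \Diamond \Box a$, and $\Box$-monotonicity applied to (B2) gives $\Box \Diamond \Box a \leq \Box a$.

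I do not expect any genuine obstacle; the only place calling for care is (A), where one must pick the correct reformulations of ($\Box$I) and ($\Diamond$E) (pushing a factor through a $\vee = 1$ identity via Corollary \ref{cL1}) and bridge the gap between $a$ and $\neg \neg a$ using (DN). Once (A) is in hand, the triangle identities in (iii) are the standard monad/comonad unit–counit calculations.
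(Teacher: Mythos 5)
Your proposal is correct, and for every item except (A) it follows the paper's own argument essentially verbatim: (i) from ($\Diamond$I), (TN) and ($\Box$I); (ii) by chaining $\Box \leq \Box\neg\neg \leq \neg\neg$ with (i); (B1) from (DN) and (i); (B2) from ($\Box$E) and ($\Diamond$E); and (iii) as the unit--counit computation from (B1), (B2) and monotonicity. The one place you diverge is (A): the paper derives it from (B1), (B2) and the monotonicity of $\Box$ and $\Diamond$ (the standard adjunction argument, e.g.\ $\Diamond a \leq b$ gives $a \leq \Box\Diamond a \leq \Box b$), whereas you go back to the defining conditions, pushing the hypothesis through a $\vee = 1$ identity via Corollary \ref{cL1} and then invoking ($\Diamond$E) or the reformulated ($\Box$I) together with (DN). Both routes are sound; yours is more self-contained at that point (and sidesteps the paper's slightly garbled internal cross-references in its proof of (A)), while the paper's is shorter once (B1) and (B2) are in hand and makes the monad/comonad structure more visible.
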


\begin{proof}
 (i) follows using ($\Diamond$I) and ($\Box$I).

 \noindent (ii) follows from (i), Lemma \ref{lmcln}(i), and transitivity of $\leq$.

 \noindent (B1) follows from (i).

 \noindent (B2) follows using ($\Box$E) and ($\Diamond$E).

 \noindent (A) $\Rightarrow$) follows by $\Box$-monotonicity, (iii), and $\leq$-transitivity.
 $\Leftarrow$) follows by $\Diamond$-monotonicity, (iv), and $\leq$-transitivity.

 \noindent (iii) The inequality $\Diamond \Box \Diamond \leq \Diamond$ holds because of (B2).
 For the other inequality apply monotonicity of $\Diamond$ to (B1).
 The other equality is proved analogously.
\end{proof}

We also have the following facts involving negation.

\begin{prop} \label{NBD}
 Let {\bf{A}}$\in \mathbb{ML^{\Box \Diamond}}$. Then,

 \noindent (i) $\Diamond \leq \neg \Box \neg$,
 (ii) $\neg \Diamond = \Box \neg$,
 (iii) $\neg \neg \Diamond = \neg \Box \neg$,
 (iv) $\Box \neg \neg = \neg \Diamond \neg$,
 (v) $\Diamond \neg \leq \neg \Box$,
 (vi) $\Box \leq \neg \Diamond \neg$,
 (vii) $\neg \Diamond \neg \leq \Box \Diamond$,
 (viii) $\Box \neg \leq \neg \Box \Diamond$,
 (ix) $\Box \Diamond \leq \neg \Box \neg$,
 (x) $\Diamond \neg \Box \neg = \Diamond \Diamond$.
\end{prop}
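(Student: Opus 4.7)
My plan begins by establishing items (i)--(v), which are essentially formal manipulations of the Galois-style definitions of $\Box$ and $\Diamond$ together with (DN) and (TN). For (i) I substitute $\neg a$ for $a$ in ($\Box$E) to obtain $\neg a \vee \neg \Box \neg a = 1$ and apply ($\Diamond$E) with $b = \neg \Box \neg a$. For (v) the same idea works after lifting ($\Box$E) via $a \leq \neg \neg a$ and Corollary~\ref{cL1} to $\neg \neg a \vee \neg \Box a = 1$, followed by ($\Diamond$E). The central identity (ii) $\neg \Diamond = \Box \neg$ splits into two inequalities: the $\leq$ direction uses ($\Box$I), reducing the goal to $\neg a \vee \neg \neg \Diamond a = 1$, which follows from ($\Diamond$I) together with (DN) and Corollary~\ref{cL1}; the $\geq$ direction $\Box \neg a \leq \neg \Diamond a$ is simply the (TN)-contrapositive of (i). Then (iii) follows by applying $\neg$ to (ii), and (iv) by substituting $\neg a$ for $a$ in (ii).

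For (vi) and (vii) I would rewrite $\neg \Diamond \neg$ as $\Box \neg \neg$ using (iv). Then (vi) reduces to $\Box \leq \Box \neg \neg$, which is Lemma~\ref{lmcln}(ii). And (vii) reduces to $\Box \neg \neg \leq \Box \Diamond$, which I would get by applying $\Box$-monotonicity to Lemma~\ref{BD}(i) (namely $\neg \neg \leq \Box \Diamond$) and then using $\Box \Box \leq \Box$ from Lemma~\ref{lmcln}(xi).

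The main obstacle is (viii) and (ix); both reduce to proving $\Box \Diamond a \leq \neg \neg \Diamond a$. Once this is known, (ix) is immediate since (iii) gives $\neg \Box \neg = \neg \neg \Diamond$; and (viii) follows since, by (ii), $\Box \neg a = \neg \Diamond a$, and $\neg \Diamond a \leq \neg \Box \Diamond a$ is the bi-directional (TN)-form of $\Box \Diamond a \leq \neg \neg \Diamond a$. The plan for this core inequality is: apply ($\Box$E) to $\Diamond a$ to obtain $\Diamond a \vee \neg \Box \Diamond a = 1$; invoke Lemma~\ref{lmc}(iv) to conclude $\neg \neg \Box \Diamond a \leq \neg \neg \Diamond a$; and finally rewrite the left-hand side as $\Box \Diamond a$ using Lemma~\ref{(DN)B} ($\neg \neg \Box = \Box$). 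This is the step I expect to be most delicate, as it combines three earlier lemmas and a double-negation trick whose necessity is not immediately obvious.

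Finally, (x) is immediate: by (iii), $\neg \Box \neg = \neg \neg \Diamond$, so $\Diamond \neg \Box \neg = \Diamond \neg \neg \Diamond = \Diamond \Diamond$, where the last equality is Lemma~\ref{D(DN)} applied to $\Diamond a$.
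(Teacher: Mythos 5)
Your proof is correct and follows essentially the same route as the paper's: the same Galois-style manipulations of ($\Box$E), ($\Box$I), ($\Diamond$I), ($\Diamond$E) for (i)--(v), the same core inequality $\Box \Diamond \leq \neg\neg\Diamond$ (via ($\Box$E) at $\Diamond a$, Lemma \ref{lmc}(iv), and $\neg\neg\Box = \Box$) driving (viii) and (ix), and the same reduction of (x) to (ii)/(iii) plus $\Diamond\neg\neg = \Diamond$. The only departures are cosmetic --- you route (vi) and (vii) through the identity $\neg\Diamond\neg = \Box\neg\neg$ of (iv) together with $\Box \leq \Box\neg\neg$ and $\Box\neg\neg \leq \Box\Box\Diamond \leq \Box\Diamond$, and you get (ix) directly from (iii), where the paper instead invokes $\neg$-antimonotonicity on (v) and (vi); both versions are equally short and valid.
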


\begin{proof}
 (i) follows by ($\Diamond$E) from $\neg \Box \neg \vee \neg = 1$, which holds by ($\Box$E).

 \noindent (ii) The inequality $\neg \Diamond \leq \Box \neg$ follows by ($\Box$I) from
 $\neg \vee \neg \neg \Diamond = 1$, which follows from ($\Diamond$I).
 The other inequality of (ii) follows from (i) using $\neg$-antimonotonicity and the fact that $\neg \neg \Box = \Box$.

 \noindent (iii) and (iv) follow immediately from (ii).

 \noindent (v) follows by ($\Diamond$E) from $\neg \Box \vee \neg \neg=1$, which follows from ($\Box$E).

 \noindent (vi) follows from (v) using $\neg$-antimonotonicity and $\neg \neg \Box = \Box$.

 \noindent (vii) follows, using ($\Box$I), from $\Diamond \vee \neg \neg \Diamond \neg = 1$,
 which holds because, due to ($\Diamond$I), $\Diamond \vee \neg = 1$ and $\neg \leq \neg \neg \Diamond \neg$,
 as seen in Lemma \ref{D}(ii).

 \noindent (viii) Using Lemma 2(i) we get $\Box \Diamond \leq \neg \neg \Diamond$.
 So, by $\neg$-antimonotonicity and (TN) we get $\neg \Diamond \leq \neg \Box \Diamond$. Finally, use (ii).

 \noindent (ix) follows applying $\neg$-antimonotonicity to (vi) and then using that $\neg \neg \Box = \Box$,
 which holds due to Lemma \ref{lmcln}(v).

 \noindent (x) Use (ii) and $\Diamond \neg \neg = \Diamond$, due to Lemma 3(vi).
\end{proof}

Regarding the inequalities not considered in the previous proposition,
taking the middle element of the three-element chain it may be seen that
$\neg \Box \leq \neg \neg \Diamond \neg$ is not the case.
So, neither $\neg \Box \leq \Diamond \neg$ nor $\neg \Diamond \neg \leq \neg \neg \Box$ are the case,
the last of which implies that also $\neg \Diamond \neg \leq \Box$ is not the case.
Also, taking the non-atom coatom of the pentagon, it may be seen that
neither $\neg \Box \neg \leq \Diamond$ nor $\neg \neg \Diamond \leq \Diamond$ are the case.

\begin{prop} \label{bWdV}
Let $\bf{A} \in \mathbb{ML}$ and $\{ a_i : i \in I \}$ an arbitrary subset of $A$.
Then,

\noindent(i) if both $\bigwedge_{i \in I} a_i$ and $\bigwedge_{i \in I} \Box a_i$ exist,
then $\bigwedge_{i \in I} \Box a_i = \Box \bigwedge_{i \in I} a_i$ and

\noindent (ii) if both $\bigvee_{i \in I} a_i$ and $\bigvee_{i \in I} \Box a_i$ exist,
then $\bigvee_{i \in I} \Diamond a_i = \Diamond \bigvee_{i \in I} a_i$.
\end{prop}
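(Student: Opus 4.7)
The plan is to exploit the adjunction $\Diamond \dashv \Box$ established in Lemma \ref{BD}(A), namely $\Diamond a \leq b$ iff $a \leq \Box b$, together with the classical order-theoretic fact that a right adjoint preserves any existing meet and a left adjoint preserves any existing join. Part (i) is the meet-preservation statement for the right adjoint $\Box$, and part (ii) is the join-preservation statement for the left adjoint $\Diamond$; both hypotheses should be read as guaranteeing that the infima/suprema on each side already exist in $A$, so that their universal properties are available.

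For part (i), write $m = \bigwedge_{i \in I} a_i$ and $n = \bigwedge_{i \in I} \Box a_i$. The inequality $\Box m \leq n$ is the easy direction: since $m \leq a_j$ for every $j$, $\Box$-monotonicity yields $\Box m \leq \Box a_j$ for every $j$, and the universal property of $n$ gives $\Box m \leq n$. For the reverse inequality $n \leq \Box m$, I would apply (A): it suffices to show $\Diamond n \leq m$, and by the universal property of $m$ this in turn reduces to $\Diamond n \leq a_j$ for every $j$. A second application of (A) turns this into $n \leq \Box a_j$ for every $j$, which is immediate from the definition of $n$.

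Part (ii) is strictly dual. Putting $M = \bigvee_{i \in I} a_i$ and $N = \bigvee_{i \in I} \Diamond a_i$ (reading the statement with the natural existence hypothesis on $\bigvee \Diamond a_i$), the inequality $N \leq \Diamond M$ is immediate from $\Diamond$-monotonicity applied to $a_j \leq M$ and the universal property of $N$. For $\Diamond M \leq N$, (A) reduces the goal to $M \leq \Box N$, and the universal property of $M$ reduces that to $a_j \leq \Box N$ for every $j$; applying (A) once more, this is $\Diamond a_j \leq N$, true by the definition of $N$.

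I do not expect any real obstacle: once the adjunction from Lemma \ref{BD}(A) is in hand, the argument is a formal zig-zag between the two sides, and neither distributivity nor any finer property of meet-complementation is invoked. The only delicate point is bookkeeping of the existence assumptions, ensuring that each appeal to a universal property is warranted by the hypothesis that the corresponding infimum or supremum exists.
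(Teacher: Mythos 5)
Your proof is correct and is essentially the paper's own argument: the easy inequality by $\Box$-monotonicity (resp.\ $\Diamond$-monotonicity) and the universal property of the meet (resp.\ join), and the hard inequality by a double application of the adjunction (A) from Lemma \ref{BD}, with (ii) obtained by dualizing. Your reading of the existence hypothesis in (ii) as referring to $\bigvee_{i\in I}\Diamond a_i$ is the intended one.
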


\begin{proof}
 (i) Using that $\bigwedge_{i \in I} a_i \leq a_i$, for every $i \in I$, and $\Box$-monotonicity,
 it follows that $\Box \bigwedge_{i \in I} a_i \leq \Box a_i$, for every $i \in I$.
 So, $\Box \bigwedge_{i \in I} a_i \leq \bigwedge_{i \in I} \Box a_i$.
 For the other inequality, we have $\bigwedge_{i \in I} \Box a_i \leq \Box a_i$, for every $i \in I$.
 Using (A), it follows that $\Diamond \bigwedge_{i \in I}\Box a_i \leq a_i$, for every $i \in I$.
 Then, $\Diamond \bigwedge_{i \in I}\Box a_i \leq \bigwedge_{i \in I} a_i$.
 Using (A) in the other direction, we get $\bigwedge_{i \in I} \Box a_i \leq \Box \bigwedge_{i \in I} a_i$.

 \noindent (ii) Dualize previous proof.
\end{proof}

\begin{cor} \label{bwdv}
 Let {\bf{A}}$\in \mathbb{ML^{\Box \Diamond}}$ and let $a, b \in A$. Then,

 (i) $\Box (a \wedge b) = \Box a \wedge \Box b$ and (ii) $\Diamond (a \vee b) = \Diamond a \vee \Diamond b$.
\end{cor}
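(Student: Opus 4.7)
The plan is to observe that this corollary is simply the two-element instance of Proposition \ref{bWdV}. Since we are working in a lattice, the binary meet $a \wedge b$ always exists, and so do $\Box a \wedge \Box b$ and $\Diamond a \vee \Diamond b$; hence the existence hypotheses of Proposition \ref{bWdV} are automatically met, and there is no obstacle to overcome.

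Concretely, for (i) I would take $I = \{1,2\}$ with $a_1 = a$ and $a_2 = b$, so that $\bigwedge_{i \in I} a_i = a \wedge b$ and $\bigwedge_{i \in I} \Box a_i = \Box a \wedge \Box b$. Both exist because $\mathbf{A}$ is a lattice, and therefore Proposition \ref{bWdV}(i) yields $\Box(a \wedge b) = \Box a \wedge \Box b$. For (ii) I would set $a_1 = a$ and $a_2 = b$ again, so that $\bigvee_{i \in I} a_i = a \vee b$ and $\bigvee_{i \in I} \Diamond a_i = \Diamond a \vee \Diamond b$; both exist in the lattice, and Proposition \ref{bWdV}(ii) gives $\Diamond(a \vee b) = \Diamond a \vee \Diamond b$.

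Since the corollary is a direct specialization, the proof is essentially one line invoking the proposition in each case; no additional lemmas or computations are required, and the only (minor) thing worth mentioning is that the existence of binary meets and joins is built into the definition of a lattice, so the hypotheses of Proposition \ref{bWdV} are discharged without effort.
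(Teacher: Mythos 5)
Your proof is correct and matches the paper's intent exactly: the paper states this as an immediate corollary of Proposition \ref{bWdV} with no separate argument, and your specialization to the two-element index set, noting that binary meets and joins (hence the required existence hypotheses) are automatic in a lattice, is precisely the intended reasoning.
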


\begin{rem}
 In Section 3 we saw that (ii) of the just given corollary does not hold in $\mathbb{ML^\Diamond}$.
 Now we see that it does hold in $\mathbb{ML^{\Box \Diamond}}$.
 So, borrowing terminology from Logic,
 we have that $\mathbb{ML^{\Box \Diamond}}$ is not a conservative expansion of $\mathbb{ML^\Diamond}$.
\end{rem}

The following lemma will be useful. 

\begin{lem} \label{TDE}
 Both $\leq$-transitivity and $\Diamond$-monotonicity may be given by equations. 
\end{lem}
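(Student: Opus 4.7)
The plan is to reduce each of the two implications to an equation. For $\Diamond$-monotonicity the key move, as flagged in the remark just preceding the lemma, is to take the inequality $\Diamond x \leq \Diamond(x \vee y)$—that is, the equation $\Diamond x \wedge \Diamond(x \vee y) \approx \Diamond x$, or in the abbreviated form $\Diamond x \preccurlyeq \Diamond(x \vee y)$—as a replacement for the quasi-equational statement ``if $a \leq b$ then $\Diamond a \leq \Diamond b$''. The two are clearly equivalent: on the one hand, assuming $a \leq b$ we have $a \vee b \approx b$, so substitution gives $\Diamond a \leq \Diamond(a \vee b) \approx \Diamond b$; on the other hand, $x \leq x \vee y$ together with $\Diamond$-monotonicity trivially yields the equation. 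Hence $\Diamond$-monotonicity is captured by a single identity.

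For $\leq$-transitivity no additional equation is required at all: it is already a consequence of the lattice identities. Indeed, if $x \wedge y \approx x$ and $y \wedge z \approx y$, then by associativity of $\wedge$,
\[
 x \wedge z \approx (x \wedge y) \wedge z \approx x \wedge (y \wedge z) \approx x \wedge y \approx x,
\]
so $x \leq z$. Thus transitivity of the order derived from $\wedge$ is already implicit in the equational theory of lattices and needs no separate axiom.

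The only subtlety, and it is a minor one, is to recognise that the quasi-equational form in which $\Diamond$-monotonicity is usually stated is subsumed by the unconditional equation $\Diamond x \preccurlyeq \Diamond(x \vee y)$; once that observation is made, both halves of the proof reduce to a few lines of equational manipulation and present no genuine obstacle.
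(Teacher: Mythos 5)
Your proposal is correct and follows essentially the same route as the paper: the identity $\Diamond x \preccurlyeq \Diamond(x\vee y)$ (the paper's ($\Diamond$E1), stated there in the equivalent join form $\Diamond x \vee \Diamond(x\vee y) \approx \Diamond(x\vee y)$) yields monotonicity by substituting $b$ for $a\vee b$ when $a\le b$. The paper dismisses the transitivity case as "easy to check"; your explicit derivation via associativity of $\wedge$ is exactly the intended argument.
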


\begin{proof}
 The case of $\leq$-transitivity is easy to check. 
 For $\Diamond$-monotonicity, note that we have the equation 
 
 {\bf{($\Diamond$E1)}} $\Diamond x \vee \Diamond (x \vee y)=\Diamond (x \vee y)$. 
 
 \noindent Now, suppose $x \vee y \approx y$. 
 Then, $\Diamond (x \vee y) \approx \Diamond y$. 
 Hence, substituting $\Diamond y$ for $\Diamond (x \vee y)$ in ($\Diamond$E1), 
 we get $\Diamond x \vee \Diamond y = \Diamond y$. 
\end{proof}

It is natural to ask whether $\mathbb{ML^{\Box \Diamond}}$ is an equational class.
The answer is positive, taking the identities for $\mathbb{ML^\Box}$ and also the following, 
where as before $x \preccurlyeq y$ abbreviates $x \wedge y \approx x$, 

\smallskip

{\bf{($\Diamond$I)}} $\neg x \vee \Diamond x \approx 1$,

\smallskip

{\bf{($\Diamond$E1)}} $\Diamond x \preccurlyeq \Diamond (x \vee y)$, 

\smallskip

{\bf{($\Diamond$E2)}} $\Diamond \Box x \preccurlyeq x$.

\smallskip

\noindent Proof: From the given equations we have to prove ($\Diamond$E) using equational logic. 
So, let us suppose $\neg x \vee y \approx 1$. 
By equation ($\Box$I1) it follows that $\Box(\neg x \vee y) \approx 1$,
which implies $\Box (\neg x \vee y) \wedge x \approx x$, which, using equation ($\Box$I2),
implies $\Box y \wedge x \approx x$, i.e. $x \preccurlyeq \Box y$, which, using $\Diamond$-monotonicity, 
which may be given by equations as proved in Lemma \ref{TDE}, gives $\Diamond x \preccurlyeq \Diamond \Box y$. 
Finally, from the just given inequality and using equation ($\Diamond$E2) and $\leq$-transitivity, 
we get $\Diamond x \preccurlyeq y$.

\begin{rem}
 It is equivalent to use $x \preccurlyeq \Box \Diamond x$ instead of $\Diamond \emph{I}$.
\end{rem}


Using results of the previous two sections, we have the following fact.

\begin{prop}
 Let {\bf{A}}$\in \mathbb{ML^{\Box \Diamond}}$.
 Then, if $D$ exists in $A$, we have $\Diamond \neg \leq D \leq \neg \Box$.
\end{prop}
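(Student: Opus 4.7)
The plan is essentially to observe that this statement is a straightforward combination of two facts already established in Sections 2 and 3, since any algebra in $\mathbb{ML^{\Box\Diamond}}$ is in particular an algebra in both $\mathbb{ML^\Box}$ and $\mathbb{ML^\Diamond}$.

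First, I would handle the right inequality $D \leq \neg \Box$. This is precisely the content of the proposition in Section 2 stating that if $D$ exists in an algebra of $\mathbb{ML^\Box}$, then $D \leq \neg \Box$. Recall that its proof amounted to noting, via ($\Box$E), that $\neg \Box a$ lies in $\{b : a \vee b = 1\}$, so by the maximality property defining $D$ we get $Da \leq \neg \Box a$. Since $\mathbf{A} \in \mathbb{ML^{\Box\Diamond}}$ is in $\mathbb{ML^\Box}$, this directly applies.

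Next, I would handle the left inequality $\Diamond \neg \leq D$. This is the proposition in Section 3 stating that if $D$ exists in an algebra of $\mathbb{ML^\Diamond}$, then $\Diamond \neg \leq D$. Its proof used that whenever $a \vee b = 1$ we have $\neg \neg a \vee b = 1$, so by ($\Diamond$E) applied with $\neg a$ in the role of the first argument we get $\Diamond \neg a \leq b$; taking $b = Da$ and using $a \vee Da = 1$ yields $\Diamond \neg a \leq Da$. Again, since $\mathbf{A}$ is in $\mathbb{ML^\Diamond}$, this applies.

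Transitivity of $\leq$ then chains the two inequalities to give $\Diamond \neg \leq D \leq \neg \Box$. There is no real obstacle here: the only thing worth flagging is the mild observation that the two hypotheses ``$D$ exists'' in the earlier propositions are subsumed by the single hypothesis of the present statement, and that the class $\mathbb{ML^{\Box\Diamond}}$ inherits all the $\Box$-axioms and $\Diamond$-axioms so both earlier propositions are available verbatim. Thus the proof is essentially a one-line appeal to those two results.
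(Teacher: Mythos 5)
Your proof is correct and matches the paper's, which simply states that the result follows ``using results of the previous two sections'' — namely the proposition in Section 2 giving $D \leq \neg\Box$ via ($\Box$E) and the one in Section 3 giving $\Diamond\neg \leq D$ via ($\Diamond$E). Your reconstruction of both earlier arguments is accurate, so nothing further is needed.
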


Regarding $B$ we have the following result.

\begin{prop}
Let {\bf{A}}$\in \mathbb{ML^{\Box \Diamond}}$.
If $\bigwedge \{\Box^{n}a: n \in \mathbb{N} \}$ exists for all $a \in A$,
then $Ba = \bigwedge \{\Box^{n}a: n \in {\mathbb{N}} \}$, for all $a \in A$.
\end{prop}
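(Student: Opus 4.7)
The plan is to prove both inequalities $Ba \leq c$ and $c \leq Ba$, where $c := \bigwedge\{\Box^n a : n \in \mathbb{N}\}$. The key conceptual input is Lemma \ref{lBB}, which characterizes Boolean elements in $\mathbb{ML^\Box}$ as those $b$ satisfying $b \leq \Box b$; this allows me to verify ``Booleanness'' of $c$ by a computation of $\Box c$.

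First I would show $Ba \leq c$. Since $Ba \leq a = \Box^0 a$ and $Ba$ is Boolean, Lemma \ref{lBB} gives $Ba \leq \Box Ba$. A routine induction using $\Box$-monotonicity then yields $Ba \leq \Box^n a$ for every $n \in \mathbb{N}$, so $Ba \leq c$.

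For the reverse inequality $c \leq Ba$, by (BI) it suffices to show that $c$ is Boolean and satisfies $c \leq a$. The latter is immediate on taking $n = 0$. To prove $c \leq \Box c$, I would first observe that the hypothesis, applied to the element $\Box a$, guarantees that $\bigwedge_n \Box^n(\Box a) = \bigwedge_{n \geq 1} \Box^n a$ also exists. Then Proposition \ref{bWdV}(i) applies to the family $\{\Box^n a\}_{n \geq 0}$ and yields
\[
\Box c \;=\; \Box \bigwedge_{n \geq 0} \Box^n a \;=\; \bigwedge_{n \geq 0} \Box^{n+1} a \;=\; \bigwedge_{n \geq 1} \Box^n a.
\]
Therefore $c = a \wedge \bigwedge_{n \geq 1} \Box^n a = a \wedge \Box c \leq \Box c$, so $c$ is Boolean by Lemma \ref{lBB}, whence $c \leq Ba$ by (BI).

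The main subtle point — and the only step that is not purely formal — is justifying the application of Proposition \ref{bWdV}(i), which requires the existence of \emph{both} meets $\bigwedge_n \Box^n a$ and $\bigwedge_n \Box^{n+1} a$. The first is given by hypothesis, and the second is obtained by re-applying the same hypothesis to $\Box a$ in place of $a$. Once this existence issue is dispatched, the rest of the argument is mechanical.
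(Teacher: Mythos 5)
Your proof is correct, and it follows the same overall decomposition as the paper's: set $c := \bigwedge_{n}\Box^{n}a$, check $c \leq a$, check that $c$ is Boolean, and check that every Boolean element below $a$ lies below $c$. The only real divergence is in how Booleanness of $c$ is established. The paper does it by two applications of the adjunction (A) of Lemma \ref{BD}: from $c \leq \Box\Box^{n}a$ for all $n$ one gets $\Diamond c \leq \Box^{n}a$ for all $n$, hence $\Diamond c \leq c$, hence $c \leq \Box c$ --- no identification of $\Box c$ with an infinite meet and no auxiliary existence claim are needed. You instead route through Proposition \ref{bWdV}(i), which obliges you to supply the existence of $\bigwedge_{n\geq 1}\Box^{n}a$; your observation that this follows from the hypothesis applied to $\Box a$ is exactly right and is the one genuinely non-mechanical point of your version. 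Since Proposition \ref{bWdV}(i) is itself proved by the same double use of (A), the two arguments share their logical core (and both use $\Diamond$ only at this step, as the paper's subsequent remark points out); yours is marginally less economical but makes explicit that $\Box$ commutes with this particular meet. One cosmetic point: the proposition also asserts that $Ba$ exists, so your first half should be phrased for an arbitrary Boolean $b \leq a$ rather than for $Ba$ itself --- the identical induction shows every such $b$ is below $c$, which together with $c$ being a Boolean element below $a$ yields that $c$ is the required maximum.
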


\begin{proof}
 We prove, for any $a \in A$, (i) $\bigwedge \{\Box^{n}a: n \in {\mathbb{N}} \} \leq a$,
 (ii) $\bigwedge \{\Box^{n}a: n \in {\mathbb{N}} \} \vee \neg  \bigwedge \{\Box^{n}a: n \in {\mathbb{N}} \} = 1$, and
 (iii) for any $b \in A$, if $b \leq a$ and $b \vee \neg b = 1$, then $b \leq \bigwedge \{\Box^{n}a: n \in {\mathbb{N}} \}$.

 \noindent (i) Just note that $\Box^{0}a = a$.

 \noindent (ii) $\bigwedge \{ \Box^{n}a: n \in \mathbb{N} \} \leq \Box^{n}a$, for all $n \in \mathbb{N}$.
 So, $\bigwedge \{ \Box^{n}a: n \in \mathbb{N} \} \leq \Box^{n}a$, for all $n \in \mathbb{N}^{\ast}$.
 So, $\bigwedge \{ \Box^{n}a: n \in \mathbb{N} \} \leq \Box^{n+1}a$, for all $n \in \mathbb{N}$.
 So, $\bigwedge \{ \Box^{n}a: n \in \mathbb{N} \} \leq \Box \Box^{n}a$, for all $n \in \mathbb{N}$.
 So, by (A), $\Diamond \bigwedge \{ \Box^{n}a: n \in \mathbb{N} \} \leq \Box^{n}a$, for all $n \in \mathbb{N}$.
 So, $\Diamond \bigwedge \{ \Box^{n}a: n \in \mathbb{N} \} \leq \bigwedge \{ \Box^{n}a: n \in \mathbb{N} \}$.
 So finally, using (A) again, $\bigwedge \{ \Box^{n}a: n \in \mathbb{N} \} \leq \Box \bigwedge \{ \Box^{n}a: n \in \mathbb{N} \}$.

 \noindent (iii) Suppose (1) $b \leq a$ and (2) $b \vee \neg b = 1$.
 From (2) using Lemma, it follows (3) $b \leq \Box b$.
 Now, inductively suppose $b \leq \Box^{n}a$, for any $n \in \mathbb{N}$.
 Then, by $\Box$-monotonicity, it follows that $\Box b \leq \Box ^{n+1}a$.
 Using (3) and transitivity, we get $b \leq \Box ^{n+1}a$.
 It follows, by induction, that $b \leq \Box^{n}a$, for any $n \in \mathbb{N}$.
 So, $b \leq \bigwedge \{\Box^{n}a: n \in {\mathbb{N}} \}$.
\end{proof}

\begin{rem}
 Note that in the previous proof we only use the existence of $\Diamond$ in step (ii).
\end{rem}

\section{The distributive extension}
\label{five}

We begin reminding the following facts.

\begin{lem}\label{dist}
 Let {\bf{L}} be a distributive lattice with top $1$.
 Then, for all $a, b, c, d \in L$, if $a \vee b = 1$, $c \wedge a \leq d$, and $c \wedge b \leq d$, then $c \leq d$,
\end{lem}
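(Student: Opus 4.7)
The plan is to apply distributivity directly to express $c$ as a join of two elements that are each already known to be below $d$, and then conclude with $(\vee E)$. This is a textbook argument and there is no real obstacle; the point is just to spell out which hypothesis is used where.

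First I would use the top element together with $\wedge$-monotonicity (or more precisely the identity $c \wedge 1 = c$, which holds because $c \leq 1$) to rewrite $c$ as $c \wedge 1$. Then, using the hypothesis $a \vee b = 1$, I substitute to obtain $c = c \wedge (a \vee b)$. At this point distributivity of the lattice $\bf{L}$ applies and gives $c = (c \wedge a) \vee (c \wedge b)$.

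The two hypotheses $c \wedge a \leq d$ and $c \wedge b \leq d$ then combine via $(\vee E)$ to yield $(c \wedge a) \vee (c \wedge b) \leq d$. Substituting back the expression for $c$ obtained in the previous step gives $c \leq d$, as desired. Note that only one direction of distributivity is needed, namely $c \wedge (a \vee b) \leq (c \wedge a) \vee (c \wedge b)$, since the reverse inequality already holds in any lattice by Lemma 1(ii); so in fact the statement would remain valid under the weaker assumption that this particular distributive inequality holds in $\bf{L}$.
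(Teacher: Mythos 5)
Your proof is correct and follows essentially the same route as the paper's: rewrite $c$ as $c \wedge (a \vee b)$, distribute, and apply $(\vee\mathrm{E})$ to the two hypotheses. The added observation that only the inequality $c \wedge (a \vee b) \leq (c \wedge a) \vee (c \wedge b)$ is needed is accurate but does not change the argument.
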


\begin{proof}
 Suppose $a \vee b = 1$. Then, $(a \vee b) \wedge c = c$.
 Then, using distributivity, $(a \wedge c) \vee (b \wedge c) = c$.
 So, using $c \wedge a \leq d$ and $c \wedge b \leq d$, we have $(c \wedge a) \vee (c \wedge b) \leq d$.
 Then, $c \leq d$.
\end{proof}

\begin{lem}\label{distneg}
 Let {\bf{L}} be a distributive lattice with meet-complement $\neg$ and top $1$ in the signature.
 Then, for all $a,b \in L$,
 (i) $(a \vee b) \wedge \neg b \leq a$,
 (ii) $(a \vee \neg b) \wedge b \leq a$,
 (iii) if $a \vee b = 1$, then $\neg b \leq a$,
 (iv) if $a \vee \neg b = 1$, then $b \leq a$.
\end{lem}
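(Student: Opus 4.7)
The plan is to obtain (i) and (ii) as direct consequences of distributivity together with $(\neg\text{E})$, and then deduce (iii) and (iv) by specializing (i) and (ii) to the case where the join equals the top.

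For (i), I would distribute $\wedge$ over $\vee$ to write
\[
(a \vee b) \wedge \neg b = (a \wedge \neg b) \vee (b \wedge \neg b).
\]
The first disjunct is below $a$ by $(\wedge\text{E})$, and the second is below $a$ by $(\neg\text{E})$ (since $b \wedge \neg b$ is a lower bound of the whole lattice). An application of $(\vee\text{E})$ gives the claim. Part (ii) is the analogous computation
\[
(a \vee \neg b) \wedge b = (a \wedge b) \vee (\neg b \wedge b),
\]
again bounded above by $a$ via $(\wedge\text{E})$ and $(\neg\text{E})$.

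For (iii), from $a \vee b = 1$ and (i) we get $\neg b = 1 \wedge \neg b = (a \vee b) \wedge \neg b \leq a$. Part (iv) follows identically from (ii). Alternatively, one could invoke Lemma \ref{dist} directly: given $a \vee b = 1$, set $c := \neg b$ and $d := a$; then $c \wedge a \leq a$ is immediate and $c \wedge b = \neg b \wedge b \leq a$ by $(\neg\text{E})$, so the lemma yields $\neg b \leq a$.

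No real obstacle is anticipated; the entire argument rests on one application of distributivity per item plus the basic lattice rules and $(\neg\text{E})$. The only small care needed is to note that $(\neg\text{E})$ in the form used here says $b \wedge \neg b$ lies below \emph{every} element of $L$, which is precisely what lets us bound the ``pinch'' term by the arbitrary element $a$ without any further hypothesis.
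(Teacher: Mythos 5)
Your proof is correct and follows essentially the same route as the paper's: one distribution of $\wedge$ over $\vee$, then bounding the term $b \wedge \neg b$ via ($\neg$E) and the other disjunct via ($\wedge$E), with (iii) and (iv) obtained by specializing the join to $1$. The only cosmetic difference is that the paper collapses the two disjuncts through $a \wedge \neg b$ before dropping to $a$, whereas you apply ($\vee$E) directly; these are the same argument.
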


\begin{proof}
 (i) $(a \vee b) \wedge \neg b \leq (a \wedge \neg b) \vee (b \wedge \neg b) \leq a \wedge \neg b \leq a$.
 (ii) is similar.
 (iii) and (iv) follow from (i) and (ii), respectively.
\end{proof}

\smallskip

Next, we consider questions of existence of $\Box$ and $\Diamond$.

\begin{prop}
 Operations $\Box$ and $\Diamond$ exist in every finite meet-complemented distributive lattice.
\end{prop}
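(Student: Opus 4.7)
The plan is to verify, for each $a$ in the lattice, that the defining set for $\Box a$ has a maximum and the defining set for $\Diamond a$ has a minimum. Since the lattice is finite, it suffices to show that the relevant sets are non-empty and closed under the appropriate binary operation ($\vee$ for maxima, $\wedge$ for minima); the sought element will then be the join (resp.\ meet) of the whole set.

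First I would handle $\Box a$. The set $S_\Box(a) = \{b : a \vee \neg b = 1\}$ contains $0$, since $\neg 0 = 1$, so it is non-empty. For closure under $\vee$, suppose $b_1, b_2 \in S_\Box(a)$, i.e.\ $a \vee \neg b_1 = 1 = a \vee \neg b_2$. By the De Morgan identity $\neg(b_1 \vee b_2) = \neg b_1 \wedge \neg b_2$ (which, as shown in Section~1, holds in every meet-complemented lattice) and by distributivity,
\[
a \vee \neg(b_1 \vee b_2) = a \vee (\neg b_1 \wedge \neg b_2) = (a \vee \neg b_1) \wedge (a \vee \neg b_2) = 1.
\]
Hence $b_1 \vee b_2 \in S_\Box(a)$, and finiteness gives that $\bigvee S_\Box(a)$ lies in $S_\Box(a)$ and is its maximum.

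For $\Diamond a$, dually, the set $S_\Diamond(a) = \{b : \neg a \vee b = 1\}$ contains $1$, so it is non-empty. For closure under $\wedge$, if $b_1, b_2 \in S_\Diamond(a)$, then directly by distributivity,
\[
\neg a \vee (b_1 \wedge b_2) = (\neg a \vee b_1) \wedge (\neg a \vee b_2) = 1.
\]
Thus $b_1 \wedge b_2 \in S_\Diamond(a)$, and by finiteness $\bigwedge S_\Diamond(a)$ belongs to $S_\Diamond(a)$ and is its minimum.

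There is no real obstacle here: the main content is the distributive step $a \vee (\neg b_1 \wedge \neg b_2) = (a \vee \neg b_1) \wedge (a \vee \neg b_2)$ (and its dual), together with the fact that De Morgan for $\vee$ is available in all meet-complemented lattices, so $\Box a$ and $\Diamond a$ are witnessed explicitly as $\bigvee S_\Box(a)$ and $\bigwedge S_\Diamond(a)$, both of which exist by finiteness.
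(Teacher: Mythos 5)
Your proposal is correct and follows essentially the same route as the paper: for $\Box a$ you show the set $\{b : a \vee \neg b = 1\}$ is closed under $\vee$ via distributivity and the De Morgan identity (and dually for $\Diamond a$), then invoke finiteness to extract the maximum (resp.\ minimum). You merely make explicit the non-emptiness and the ``join of the whole set'' step, which the paper leaves implicit.
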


\begin{proof}
 We prove the $\Box$-case.
 The $\Diamond$-case is similar.
 Suppose $a \vee \neg b_{1} = 1$ and $a \vee \neg b_{2}=1$.
 Then, $(a \vee \neg b_{1}) \wedge (a \vee \neg b_{2})=1$.
 Now, using distributivity, $(a \vee \neg b_{1}) \wedge (a \vee \neg b_{2}) \leq a \vee (\neg b_{1} \wedge \neg b_{2})$.
 So, $a \vee (\neg b_{1} \wedge \neg b_{2})=1$.
 Using De Morgan, it follows that $a \vee \neg (b_{1} \vee b_{2})=1$.
\end{proof}

\begin{prop}
 There is an (infinite) meet-complemented distributive lattice $\mathbf{A}$ and an element $a\in A$ such that
 $\Box a$ does not exist.
\end{prop}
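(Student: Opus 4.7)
The plan is to exhibit a topological example. Let $A$ be the lattice of open subsets of $\mathbb{R}$ under inclusion, with $0 = \emptyset$, $1 = \mathbb{R}$, and meet/join given by $\cap$/$\cup$. This is a (complete, hence) distributive lattice. It is meet-complemented: for any open $U$, the largest open disjoint from $U$ is $\neg U := \operatorname{int}(\mathbb{R}\setminus U)$, because $U\cap V = \emptyset$ for opens amounts to $V\subseteq \mathbb{R}\setminus U$, and the largest such open is the interior of $\mathbb{R}\setminus U$. I would then take $a = (0,1)$ and show that $\Box a$ does not exist in $A$.

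The first step is to rewrite the defining condition. For any open $V$, a point $x$ lies in $\neg V$ iff some neighborhood of $x$ misses $V$, i.e.\ iff $x \notin \overline{V}$. Hence $a \cup \neg V = \mathbb{R}$ iff $\mathbb{R}\setminus a \subseteq \mathbb{R}\setminus \overline{V}$, iff $\overline{V} \subseteq a$. So the set whose greatest element would be $\Box a$ is precisely $S := \{V \in A : \overline{V} \subseteq (0,1)\}$.

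Next I would produce an ascending chain inside $S$ whose only candidate upper bound lies outside $S$. The family $V_n := (1/n,\, 1 - 1/n)$, for $n \geq 3$, lies in $S$ since $\overline{V_n} = [1/n,\, 1-1/n] \subseteq (0,1)$. Any $W \in S$ that is an upper bound of $\{V_n\}$ must satisfy $W \supseteq \bigcup_n V_n = (0,1)$, while the condition $\overline{W} \subseteq (0,1)$ forces $W \subseteq (0,1)$; therefore $W = (0,1)$. But $\overline{(0,1)} = [0,1] \not\subseteq (0,1)$, contradicting $W \in S$. So $S$ has no greatest element, i.e.\ $\Box a$ does not exist, witnessing the proposition.

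No step is a serious obstacle; the only care needed is matching the set-theoretic definition of $\neg$ in Section~1 with the topological formula $\operatorname{int}(\mathbb{R}\setminus U)$, and handling the closure $\overline{(0,1)} = [0,1]$ carefully. The same strategy would work in the lattice of opens of any $T_1$ space with a non-isolated point, or equivalently in many countable Heyting-style examples, so the construction is quite flexible.
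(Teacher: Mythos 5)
Your proof is correct, and it takes a genuinely different route from the paper's. The paper uses an example due to Montagna: a subalgebra $A=A_1\cup A_2$ of the power $[0,1]_{G}^{\mathbb{N}}$ of the standard G\"{o}del algebra, built from sequences whose zero-set is finite or cofinite, and then exhibits an element $a$ for which $\{y\in A: a\vee\neg y=1\}$ has no maximum. You instead work in the frame of open subsets of $\mathbb{R}$, translate $a\vee\neg V=1$ into the purely topological condition $\overline{V}\subseteq(0,1)$, and observe that the family of such $V$ has supremum $(0,1)$ but no maximum, since the candidate $(0,1)$ fails $\overline{(0,1)}\subseteq(0,1)$. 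Your verification of the meet-complement as $\neg U=\operatorname{int}(\mathbb{R}\setminus U)=\mathbb{R}\setminus\overline{U}$ and the chain argument with $V_n=(1/n,1-1/n)$ are both sound. What each approach buys: yours is shorter and conceptually transparent, and it makes vivid the point that even completeness of the lattice does not rescue $\Box$, because $\Box a$ is defined as a maximum rather than a supremum; the paper's example, being a subalgebra of a power of a chain, shows that $\Box$ can fail already inside the narrower class of G\"{o}del algebras, which is relevant to the substructural-logic context the authors cite. One small caveat: your closing claim that the same works for the opens of ``any $T_1$ space with a non-isolated point'' is too quick --- in the cofinite topology on an infinite set every nonempty open set is dense, so the relevant set collapses to $\{\emptyset\}$ and $\Box$ exists trivially; but this aside does not affect the correctness of the $\mathbb{R}$ argument.
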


\begin{proof}
 We owe the following example to Franco Montagna (see \cite{AEM}).
 Let $[0,1]_{G}$ be the standard G\"{o}del algebra on $[0,1]$ and let us consider 

 $A_{1} = \{ a\in [0,1]_{G}^{\mathbb{N}}$ such that $\left\{ i\in \mathbb{N}:a_{i}=0\right\}$ is finite $\}$, 
 
 $A_{2} = \{ a\in [0,1]_{G}^{\mathbb{N}}$ such that $\left\{ i\in \mathbb{N}:a_{i}=0\right\} $ is cofinite $\}$, 
 
 $A = A_{1}\cup A_{2}$. 
 
 \noindent We claim that $A$ is the domain of a subalgebra of $[0,1]_{G}^{\mathbb{N}}$.
 Indeed, if $a,b\in A_{1}$, then $a\wedge b\in A_{1}$, $a\vee b\in A_{1}$, and $a\rightarrow b\in A_{1}$.
 If $a,b\in A_{2}$, then $a\wedge b\in A_{2}$, $a\vee b\in A_{2}$, and $a\rightarrow b\in A_{1}$.
 If $a\in A_{1}$ and $b\in A_{2}$, then $a\wedge b\in A_{2}$, $a\vee b\in A_{1}$, and $a\rightarrow b\in A_{2}$.
 Finally, if $a\in A_{2}$ and $b\in A_{1}$, then $a\wedge b\in A_{2}$, $a\vee b\in A_{1}$, and $a\rightarrow b\in A_{1}$.
 Since $0\in A_{2}$ and $1\in A_{1}$, $A$ is the domain of a subalgebra $\mathbf{A}$ of $[0,1]_{G}^{\mathbb{N}}$.
 Now, let $a$ be such that $a_{i}=1$ if $i$ is even and $a_{i}=\frac{1}{2}$ if $i$ is odd.
 Then, $\{y \in A: x \vee \neg y=1\}$ consists of all elements $a$ such that $a_{i}=0$ for all odd $i$ and
 for all but finitely many even $i$, and $a_{i} \neq 0$ otherwise.
 Clearly, this set has no maximum in $A$.
\end{proof}


We will use the notation $\mathbb{ML_\textrm{d}^{\Box \Diamond}}$
for the class of distributive meet-complemented lattices with both $\Box$ and $\Diamond$.

\begin{lem} \label{(DN)}
 Let $\bf{A} \in {\mathbb{ML_\textrm{d}^{\Box \Diamond}}}$ and let $a, b \in A$. Then,

 \noindent (i) $\Box \leq \circ$, $\circ \leq \Diamond$, and $\Box \leq \Diamond$,
 (ii) $\Box (a \vee \neg a) = \Box a \vee \Box \neg a$,
 (iii) $\Diamond \neg (a \wedge b) = \Diamond \neg a \vee \Diamond \neg b$, and
 (iv) $\Diamond \neg (a \wedge b) = \Diamond (\neg a \vee \neg b)$.
\end{lem}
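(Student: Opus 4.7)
The plan is to prove the four parts in the order \textup{(i)}, \textup{(ii)}, \textup{(iv)}, \textup{(iii)}, since \textup{(i)} and \textup{(iv)} are the building blocks for the rest.

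For \textup{(i)}, both the T-properties of $\Box$ and $\Diamond$ fall out of distributivity essentially for free. Applying \textup{($\Box$E)}, one has $a \vee \neg \Box a = 1$, and by Lemma \ref{distneg}(iv) (with the roles $a$ playing the outer $a$ and $\Box a$ playing $b$) this immediately yields $\Box a \leq a$, i.e.\ $\Box \leq \circ$. Dually, \textup{($\Diamond$I)} gives $\Diamond a \vee \neg a = 1$, and the same Lemma \ref{distneg}(iv) now gives $a \leq \Diamond a$, i.e.\ $\circ \leq \Diamond$. The inequality $\Box \leq \Diamond$ is then just $\leq$-transitivity.

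For \textup{(ii)}, one direction, $\Box a \vee \Box \neg a \leq \Box(a \vee \neg a)$, is a special case of Proposition \ref{BoxIn}(ii). For the harder direction, I would use \textup{(i)} to rewrite $\Box(a \vee \neg a) = \Box(a \vee \neg a) \wedge (a \vee \neg a)$, then apply distributivity to split it as
\[
\bigl(\Box(a \vee \neg a) \wedge a\bigr) \;\vee\; \bigl(\Box(a \vee \neg a) \wedge \neg a\bigr).
\]
The first summand is bounded by $\Box a$ via Proposition \ref{BoxIn}(iii) taking $x=a$, $y=a$ (so that $x \vee \neg y = a \vee \neg a$), and the second is bounded by $\Box \neg a$ via Proposition \ref{BoxIn}(iv) taking $x=\neg a$, $y=a$. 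Combining the two halves yields $\Box(a \vee \neg a) \leq \Box a \vee \Box \neg a$.

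For \textup{(iv)}, the equality $\Diamond \neg (a \wedge b) = \Diamond(\neg a \vee \neg b)$ has already been proved in the full generality of $\mathbb{ML^\Diamond}$ in Proposition \ref{D}(iv), so it is inherited here with no further work. Finally, for \textup{(iii)}, one direction $\Diamond \neg a \vee \Diamond \neg b \leq \Diamond \neg (a \wedge b)$ is Proposition \ref{D}(iii); for the reverse, I chain \textup{(iv)} with Corollary \ref{bwdv}(ii) (the $\Diamond$-preserves-$\vee$ identity available in $\mathbb{ML^{\Box \Diamond}}$) to obtain $\Diamond \neg (a \wedge b) = \Diamond(\neg a \vee \neg b) = \Diamond \neg a \vee \Diamond \neg b$. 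The main obstacle is really only the splitting step in \textup{(ii)}; everything else is a short deduction from already-established facts, and the essential new ingredient across the whole lemma is that under distributivity one can chop $\Box(a \vee \neg a)$ along the (not-necessarily-Boolean) pair $a, \neg a$ and still recognise each piece via the general $\Box$-rules.
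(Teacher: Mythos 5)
Your proof is correct. Parts (i) and (ii) coincide with the paper's argument: ($\Box$E) resp.\ ($\Diamond$I) combined with Lemma \ref{distneg}(iv) for the T-properties, and for (ii) the same splitting of $\Box(a \vee \neg a) = \Box(a\vee\neg a)\wedge(a\vee\neg a)$ by distributivity, with the two summands handled by Proposition \ref{BoxIn}(iii) and (iv); you additionally make explicit the easy converse inclusion via Proposition \ref{BoxIn}(ii), which the paper leaves tacit. Where you genuinely diverge is in the order and mechanism of (iii) and (iv). The paper proves (iii) first, by a direct distributive computation: it multiplies out $(\neg\neg a \vee \Diamond\neg a)\wedge(\neg\neg b\vee\Diamond\neg b)=1$, collapses the cross terms, and applies ($\Diamond$E); it then obtains (iv) from (iii) together with Corollary \ref{bwdv}(ii). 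You go the other way: (iv) is imported wholesale from Proposition \ref{D}(iv), which was established in $\mathbb{ML^{\Diamond}}$ without any distributivity, and (iii) then falls out by chaining (iv) with Corollary \ref{bwdv}(ii). Your route is shorter and, as a bonus, shows that (iii) already holds in $\mathbb{ML^{\Box\Diamond}}$ --- the adjunction-based identity $\Diamond(x\vee y)=\Diamond x\vee\Diamond y$ does all the work and distributivity is never invoked for these two parts --- whereas the paper's computation is self-contained within the distributive setting and does not lean on the presence of $\Box$ for part (iii)'s hard direction.
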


\begin{proof}
 (i) Using ($\Box$E) and Lemma \ref{distneg}(iv) we get $\Box \leq \circ$.
 It is similar to get $\circ \leq \Diamond$ and $\Box \leq \Diamond$ results by $\leq$-transitivity.

 \noindent (ii) Inequalities $\Box (a \vee \neg a) \wedge a \leq \Box a$ and
 $\Box (a \vee \neg a) \wedge \neg a \leq \Box \neg a$ follow from Proposition \ref{BoxIn}(iii) and (iv), respectively.
 As, using (i) we have $\Box (a \vee \neg a) \leq a \vee \neg a$, the goal follows using distributivity.

 \noindent (iii) By ($\Diamond$I) we have both $\neg \neg a \vee \Diamond \neg a = 1$ and $\neg \neg b \vee \Diamond \neg b = 1$.
 So, $(\neg \neg a \vee \Diamond \neg a) \wedge (\neg \neg b \vee \Diamond \neg b) = 1$.
 So, by distributivity,
 $(\neg \neg a \wedge \neg \neg b) \vee (\neg \neg a \wedge \Diamond \neg b) \vee (\Diamond \neg a \wedge \neg \neg b) \vee (\Diamond \neg a \wedge \Diamond \neg b) = 1$.
 So, by a property of $\vee$, we have that $(\neg \neg a \wedge \neg \neg b) \vee (\Diamond \neg a \vee \Diamond \neg b) = 1$.
 So, by properties of $\wedge$ and $\neg$, it follows that $\neg \neg (a \wedge b) \vee (\Diamond \neg a \vee \Diamond \neg b) = 1$.
 So, by ($\Diamond$E), we have $\Diamond \neg(a \wedge b) \leq \Diamond \neg a \vee \Diamond \neg b$.
 The other inequality was proved in Lemma \ref{D} in Section 3, as it does not require distributivity.

 \noindent (iv) follows from (iii) and Proposition \ref{bwdv}(ii).
\end{proof}

We also have the following generalization of Lemma \ref{(DN)}(i), which is easily seen.

\begin{lem} \label{g(DN)}
Let us consider a lattice of $\mathbb{ML_\textrm{d}^{\Box \Diamond}}$. Then,
$\Box^{n} \leq \circ$, $\circ \leq \Diamond^{n}$, $\Box^{n} \leq \Diamond^{n}$.
\end{lem}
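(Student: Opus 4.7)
The plan is to prove all three inequalities by a straightforward induction on $n$, bootstrapping from the base case $n=1$ already established in Lemma \ref{(DN)}(i) and using the monotonicity of $\Box$ and $\Diamond$ together with $\leq$-transitivity.

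First I would handle $\Box^{n} \leq \circ$. The base case $n=1$ is just $\Box \leq \circ$ from Lemma \ref{(DN)}(i). For the inductive step, assuming $\Box^{n} \leq \circ$, apply $\Box$-monotonicity to get $\Box^{n+1} = \Box \Box^{n} \leq \Box \circ = \Box$, and then combine with $\Box \leq \circ$ via $\leq$-transitivity to obtain $\Box^{n+1} \leq \circ$. Alternatively, one can observe directly that $\Box^{n+1} a \leq \Box^{n} a$ for each $a$ (since $\Box x \leq x$), so iterating gives $\Box^{n} a \leq \Box^{n-1} a \leq \cdots \leq a$.

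The inequality $\circ \leq \Diamond^{n}$ is dual. The base case is $\circ \leq \Diamond$ from Lemma \ref{(DN)}(i). For the inductive step, assuming $\circ \leq \Diamond^{n}$, apply $\Diamond$-monotonicity to $a \leq \Diamond^{n} a$ to get $\Diamond a \leq \Diamond^{n+1} a$, and combine with $a \leq \Diamond a$ via $\leq$-transitivity to obtain $a \leq \Diamond^{n+1} a$.

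Finally, $\Box^{n} \leq \Diamond^{n}$ follows immediately from the previous two by $\leq$-transitivity: $\Box^{n} \leq \circ \leq \Diamond^{n}$. I do not expect any significant obstacle here; the only ingredients are the base-case inequalities (which rest on distributivity via Lemma \ref{distneg}(iv)) and the monotonicity of the two modal operators established in Sections 2 and 3. Hence the lemma is ``easily seen'' as claimed.
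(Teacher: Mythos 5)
Your induction is correct and is exactly the ``easily seen'' argument the paper has in mind (the paper omits the proof entirely, citing Lemma \ref{(DN)}(i) as the base case). Nothing to add.
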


Considering $\neg$ we get $\neg \neg \leq \Diamond$.
Also, $\neg \neg \Diamond \leq \Diamond \Diamond$.

In the distributive extension we still do not have $\neg \Box \neg \leq \Diamond$.
This can be seen in the lattice $R_8$ taking the meet-irreducible atom $r_a$,
where $\neg \Box \neg r_a =1$ and $\Diamond r_a$ is the join-irreducible coatom $r_c$.
We also have that $\neg \neg \Diamond r_a = 1$.
So, neither $\neg \neg \Diamond \leq \Diamond$ is the case.

It is natural to suspect that there will exist an infinite number of modalities, that is,
finite combinations of $\neg$, $\Box$, and $\Diamond$.
In order to see it, let us use some ideas from representation theory.
Let $A$ be an increasing set of a poset $P$.
We have that $\neg A =(\downarrow A)^{c}$, $\Diamond A = \uparrow \downarrow A$, and $\Box A = (\downarrow (\uparrow  A^{c}))^{c}$,
where $\downarrow A := \{p \in P : p \leq a \textrm{ for some } a \in A\}$ and $\uparrow A$ is defined dually.
The characterizations of $\Diamond$ and $\Box$ can also be stated as,

\begin{enumerate}
\item[i.] $x \in \Diamond A$ iff there exist $y, z \in P$ such that $y \leq z, y \leq x$, and $z \in A$, and
\item[ii.] $x \in \Box A$ iff for every $y, z \in P$, if $x \leq y, z \leq y$, then  $z \in A$.
\end{enumerate}

Let us use them to see that there are infinite modalities.
Consider the poset $P_1$ of figure \ref{pp1}.
Then, for $A=\{ a \}$, we have $\Diamond A = P_1$.

\begin{figure} [H]
\begin{center}

\begin{tikzpicture}

    \tikzstyle{every node}=[draw, circle, fill=white, minimum size=4pt, inner sep=0pt, label distance=1mm]

    \draw (0,0) node (1) [label=left:$a$] {}
        -- ++(270:1cm) node (0) [] {}
        -- ++(45:1.4142cm) node (b) [] {};

\end{tikzpicture}

\end{center}
\caption{\label{pp1} A three-element partial order}
\end{figure}
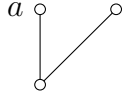




\noindent Consider now the poset $P_2$ of figure \ref{pp2}.
For $A=\{ a\}$, we have $\Diamond A = \{ a, b, c \}$, and $\Diamond^2 A = P_2$.

\begin{figure} [H]
\begin{center}

\begin{tikzpicture}

    \tikzstyle{every node}=[draw, circle, fill=white, minimum size=4pt, inner sep=0pt, label distance=1mm]

    \draw (0,0) node (a) [label=left:$a$] {}
        -- ++(270:1cm) node (b) [label=left:$b$] {}
        -- ++(45:1.4142cm) node (c) [label=left:$c$] {}
        -- ++(270:1cm) node (d) [] {}
        -- ++(45:1.4142cm) node (e) [] {};

\end{tikzpicture}

\end{center}
\caption{\label{pp2} A five-element partial order}
\end{figure}
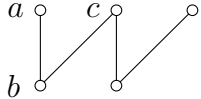




\noindent Continuing in this way, we get models where $\Diamond$, $\Diamond^{2}$, \dots, $\Diamond^{n}$ are different modalities,
for any fixed $n \geq 1$.

Similarly, it is possible to get a model where $\Box$, $\Box^{2}$, \dots, $\Box^{n}$ are different modalities, for any fixed $n \geq 1$.
Take $P_n$ as before and $A$ the complement of the rightmost two-elements in $P_n$.
Note, also, that taking $P_\infty$ it can be seen that all the modalities $\Box^{i}$ are different,
a goal that is also achieved taking the disjoint union of all the (finite) $P_n$. 
Finally, as also $D$ exists in the given models and we have seen in Proposition \ref{DgivesBox} that in that case $\Box$ and $\neg D$ coincide,
we have also shown that the modalities $(\neg D)^i$ are all different. 

In order to compare $\Box$ and $\Diamond$ with $\neg D$ and $D\neg$, respectively,
let us consider the infinite meet-complemented distributive lattice $1 \oplus (\mathbb{N} \times \mathbb{N})^{\partial}$,
already given in Figure \ref{G}.
We have that both $\Box$ and $\Diamond$ exist.
However, $D$ does not exist.

In the following proposition we prove two inequalities stated in \cite[p. 302]{Dunn}.

\begin{prop} \label{Dunn}
 Let us consider a lattice of $\mathbb{ML_\textrm{d}^{\Box \Diamond}}$. Then,

 {\bf{(D1)}} $\Diamond a \wedge \Box b \leq \Diamond (a \wedge b)$ and {\bf{(D2)}} $\Box (a \vee b) \leq \Box a \vee \Diamond b$.
\end{prop}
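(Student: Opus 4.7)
The plan for both inequalities is to combine distributivity with a partition of the top element that comes from one of the defining axioms of $\Box$ or $\Diamond$, so that meeting the left-hand side with this partition produces a case split whose pieces can be absorbed individually. I would begin with (D2), which is the more direct. Taking $\neg b \vee \Diamond b = 1$ from ($\Diamond$I) and distributing, $\Box(a \vee b) = (\Box(a \vee b) \wedge \neg b) \vee (\Box(a \vee b) \wedge \Diamond b)$; the first summand is $\leq \Box a$ by Proposition~\ref{BoxIn}(iv) and the second is trivially $\leq \Diamond b$. This gives (D2).

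For (D1) a straight dualization does not work: the would-be dual of Proposition~\ref{BoxIn}(iv), namely $\Diamond a \leq \Diamond(a \wedge b) \vee \neg b$, already fails in $2^{2} \oplus 1$ taking $a, b$ to be the two atoms (there $\Diamond a = 1$, $\Diamond(a \wedge b) = 0$, $\neg b = a$). The key step I would prove instead is the auxiliary distributive inequality
\[
\neg(a \wedge b) \leq \neg a \vee \neg \Box b.
\]
Its proof is again a one-line distributivity trick: by ($\Box$E), $1 = b \vee \neg \Box b$, so $\neg(a \wedge b) = (\neg(a \wedge b) \wedge b) \vee (\neg(a \wedge b) \wedge \neg \Box b)$, and for the first conjunct the computation $\neg(a \wedge b) \wedge b \wedge a = \neg(a \wedge b) \wedge (a \wedge b) = 0$ combined with ($\neg$I) gives $\neg(a \wedge b) \wedge b \leq \neg a$.

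With the auxiliary inequality in hand, combining with ($\Diamond$I) applied to $a \wedge b$ yields $\neg a \vee \neg \Box b \vee \Diamond(a \wedge b) = 1$. Hence ($\Diamond$E) applied to $a$ gives $\Diamond a \leq \neg \Box b \vee \Diamond(a \wedge b)$. Meeting both sides with $\Box b$ and distributing, the $\neg \Box b$ summand vanishes since $\Box b \wedge \neg \Box b = 0$, and we conclude $\Diamond a \wedge \Box b \leq \Diamond(a \wedge b)$. The main obstacle is spotting the auxiliary inequality: the symmetric partition $b \vee \neg \Box b = 1$ can be used to decompose $\Diamond a \wedge \Box b$ directly, but only reduces it to $\Diamond a \wedge b$, which fails to be bounded by $\Diamond(a \wedge b)$ in general; the trick is to instead use this same partition to decompose $\neg(a \wedge b)$, after which distributivity and the existing ($\Diamond$I)/($\Diamond$E) machinery finish the argument.
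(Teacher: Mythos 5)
Your proof is correct and follows essentially the same route as the paper's: for (D2) you split $\Box(a\vee b)$ along the partition $\neg b \vee \Diamond b = 1$ exactly as the paper does (citing Proposition~\ref{BoxIn}(iv) where the paper rederives it in place), and for (D1) your auxiliary inequality $\neg(a\wedge b)\leq \neg a \vee \neg\Box b$, obtained from $b \vee \neg\Box b = 1$ by distributivity, is precisely the paper's step (iv), after which both arguments conclude identically via ($\Diamond$I), ($\Diamond$E), and a final meet with $\Box b$.
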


\begin{proof}
 (D1) By ($\Box$E) we have (i) $b \vee \neg \Box b = 1$.
 We also have that (ii) $\neg(a \wedge b) \wedge b \leq \neg a \leq \neg a \vee \neg \Box b$.
 Also, $\neg (a \wedge b) \wedge \neg \Box b \leq \neg a \vee \neg \Box b$.
 Then, from (i), (ii), and (iii) it follows, using Lemma \ref{dist}, that (iv) $\neg (a \wedge b) \leq \neg a \vee \neg \Box b$.
 Now, by ($\Diamond$I), we have that $\neg (a \wedge b) \vee \Diamond (a \wedge b) = 1$.
 So, using (iv), it follows that $\neg a \vee (\Diamond (a \wedge b) \vee \neg \Box b) = 1$.
 So, by ($\Diamond$E), it follows that $\Diamond a \leq \Diamond (a \wedge b) \vee \neg \Box b$.
 By monotonicity of $\wedge$ it follows that $\Diamond a \wedge \Box b \leq (\Diamond (a \wedge b) \vee \neg \Box b) \wedge \Box b$,
 which implies, by distributivity, that $\Diamond a \wedge \Box b \leq \Diamond (a \wedge b)$.

 (D2) By ($\Diamond$I) we have (i) $\neg b \vee \Diamond b = 1$.
 We also have that (ii) $\Box(a \vee b) \wedge \neg b \leq \Box a \leq \Box a \vee \Diamond b$,
 which follows from $a \vee \neg (\Box (a \vee b) \wedge \neg b) = 1$,
 which follows from $(a \vee b) \vee \neg \Box (a \vee b) = 1$, which holds because of ($\Box$E).
 Now, we also have (iii) $\Box (a \vee b) \wedge \Diamond b \leq \Diamond b \leq \Box a \vee \Diamond b$.
 Finally, using Lemma \ref{dist} with (i), (ii), and (iii), we get (D2).
\end{proof}

Since $\mathbb{ML^{\Box \Diamond}}$ is a variety, its distributive extension $\mathbb{ML_\textrm{d}^{\Box \Diamond}}$
is a subvariety of it, which has a forgetful functor on the variety of bounded distributive lattices.
Since the variety of bounded distributive lattices is generated by its finite elements,
it naturally arises the question if ${\mathbb{ML_\textrm{d}^{\Box \Diamond}}}$ is also generated by its finite members.
We end this section answering this question.

Let {\bf{L}} be a bounded distributive lattice, $X$ a finite non void subset of $L$ and
{\bf{L$_X$}} the (finite) sublattice of {\bf{L}} generated by $X \cup \{0,1\}$.
Since {\bf{L$_X$}} is a finite distributive lattice,
it has its own meet-complement $\neg'x := \textrm{max}\{y \in L_X : x \wedge y = 0 \}$.
By a classical argument (\cite[Thm. 11.9.1]{DunnH}),
we have the following result relating the meet-complements of an element in $L_X$ and in $L$, respectively.

\begin{lem} \label{cero}
Let {\bf{L}} be a bounded distributive lattice, and $X$ and $\mathbf{L}_X$ are as before.
If $x \in L$ has  meet-complement in $L$, $\neg x$, and $x, \neg x \in X$, then $\neg' x = \neg x$.
\end{lem}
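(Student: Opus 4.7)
The plan is to prove the equality by showing $\neg x \leq \neg' x$ and $\neg' x \leq \neg x$ separately, each following directly from the maximality characterizations of the two meet-complements.

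For the first inequality, I would observe that, by hypothesis, $\neg x \in X \subseteq L_X$, and by the defining property of the meet-complement in $\mathbf{L}$ we have $x \wedge \neg x = 0$. Thus $\neg x$ belongs to the set $\{y \in L_X : x \wedge y = 0\}$, whose maximum (which exists because $L_X$ is finite and the set is non-empty, containing at least $0$) is by definition $\neg' x$. Hence $\neg x \leq \neg' x$.

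For the reverse inequality, I would note that $\neg' x \in L_X \subseteq L$ and $x \wedge \neg' x = 0$ by the definition of $\neg'$. Therefore $\neg' x$ is an element of the set $\{y \in L : x \wedge y = 0\}$, whose maximum in $L$ is $\neg x$ (this maximum exists by the hypothesis that $x$ has a meet-complement in $L$). So $\neg' x \leq \neg x$.

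Combining both inequalities gives $\neg' x = \neg x$. There is no real obstacle here; the content of the lemma is just that the hypothesis $\neg x \in X$ forces the candidate witnessing the maximum in $L$ to already live in $L_X$, so the two maxima must coincide. The only minor point to be careful about is that $L_X$, being a finite lattice, guarantees existence of $\neg' x$, so both quantities being compared are well defined.
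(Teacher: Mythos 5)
Your proof is correct and follows essentially the same route as the paper's: one direction comes from $\neg x$ being a member of the set $\{y \in L_X : x \wedge y = 0\}$ whose maximum is $\neg' x$, and the other from $\neg' x$ being a member of the set in $L$ whose maximum is $\neg x$ (the paper phrases this second direction as an inclusion of the two sets, which is the same observation). The remarks on well-definedness of $\neg' x$ via finiteness of $L_X$ match what the paper establishes just before stating the lemma.
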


\begin{proof}
For any $x \in L_X$, we have that $\{ y \in L_X : x \wedge y = 0 \} \subseteq \{ y \in L : x \wedge y = 0 \}$.
Hence, $\neg' x \leq \neg x$.
On the other hand, take $x \in X$.
Since $\neg x$ is also in $X$, and hence in $LX$, and $\neg x \wedge x = 0$, we conclude that $\neg x \leq \neg' x$.
\end{proof}

Note also that since {\bf{L$_X$}} is finite and distributive, {\bf{L$_X$}}$\in \mathbb{ML_\textrm{d}^{\Box \Diamond}}$.
Write $\Box'$ and $\Diamond'$ for the modal operations defined in $L_X$.
We want to relate them with operations $\Box$ and $\Diamond$ in $L$, when they are defined.
We do this in the next two lemmata.

\begin{lem} \label{uno}
Let {\bf{L}} be a bounded distributive lattice, and $X$ and {\bf{L$_X$}} as before.
If $\Box x$ exists in $L$ and $x$, $\Box x$, and $\neg \Box x$ are in $X$, then $\Box' x = \Box x$.
\end{lem}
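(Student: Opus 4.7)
The plan is to verify directly that the element $\Box x$ (computed in $L$) satisfies the maximum property that defines $\Box' x$ inside $L_X$; by uniqueness of max this forces $\Box' x = \Box x$. Both $x$ and the candidate $\Box x$ lie in $L_X$ by hypothesis, and the lattice operations $\wedge, \vee$ together with the constants $0, 1$ are inherited from $L$; the only operation that behaves differently in $L_X$ is the meet-complement, which becomes $\neg'$. So the task reduces to verifying $(\Box'\textrm{E})$ and $(\Box'\textrm{I})$ for the witness $\Box x$ at the element $x$ in $L_X$.

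First I would check $(\Box'\textrm{E})$: I need $x \vee \neg' \Box x = 1$ in $L_X$. Since $\Box x$ and $\neg \Box x$ both lie in $X$, Lemma \ref{cero} applied to $\Box x$ gives $\neg' \Box x = \neg \Box x$, and hence $x \vee \neg' \Box x = x \vee \neg \Box x = 1$ by $(\Box\textrm{E})$ in $L$. Next, for $(\Box'\textrm{I})$, I would take an arbitrary $b \in L_X$ with $x \vee \neg' b = 1$ and deduce $b \leq \Box x$. The inclusion $\{y \in L_X : b \wedge y = 0\} \subseteq \{y \in L : b \wedge y = 0\}$ used in the proof of Lemma \ref{cero} yields $\neg' b \leq \neg b$ in $L$; since joins agree between $L_X$ and $L$, Corollary \ref{cL1} promotes the hypothesis $x \vee \neg' b = 1$ to $x \vee \neg b = 1$ in $L$. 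Then $(\Box\textrm{I})$ applied to $\Box$ in $L$ delivers $b \leq \Box x$, as required.

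The only delicate point is the asymmetry of Lemma \ref{cero}: for a generic $b \in L_X$ one only has $\neg' b \leq \neg b$, not equality. Fortunately, this inequality points in the useful direction for $(\Box'\textrm{I})$, since we only need to strengthen an identity of the form $\cdots \vee \neg' b = 1$ to $\cdots \vee \neg b = 1$. Equality of $\neg'$ and $\neg$ is needed only at the specific element $\Box x$, to establish $(\Box'\textrm{E})$, and that is precisely why the hypothesis requires $\neg \Box x \in X$ (in addition to $x, \Box x \in X$, which are needed just for $\Box' x$ to be meaningful and for $\Box x$ to be an admissible candidate in $L_X$). No further appeal to distributivity or to the interaction of $\Box$ with other connectives is required.
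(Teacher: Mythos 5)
Your proof is correct and takes essentially the same route as the paper: the paper establishes $\Box' x \leq \Box x$ from the inclusion $\{y \in L_X : x \vee \neg' y = 1\} \subseteq \{y \in L : x \vee \neg y = 1\}$ (your $(\Box'\textrm{I})$ step via $\neg' b \leq \neg b$), and $\Box x \leq \Box' x$ by showing $\Box x$ belongs to the set defining $\Box' x$ using Lemma \ref{cero} at $\Box x$ (your $(\Box'\textrm{E})$ step). Your closing remark correctly identifies why the asymmetric inequality $\neg' \leq \neg$ suffices in one direction while full equality, and hence the hypothesis $\neg\Box x \in X$, is needed only at $\Box x$.
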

\begin{proof}
Since for every $y \in L_X$ we have $\neg' y \leq \neg y$, it follows that
\[
\{y \in L_X : x \vee \neg' y = 1\} \subseteq \{y \in L_X : x \vee \neg y = 1\} \subseteq \{y \in L : x \vee \neg y = 1\}.
\]


\noindent Hence, $\Box' x \leq \Box x$.
On the other hand, since $\neg \Box x \in L_X$, we have that $\Box x \in \{y \in L_X : x \vee \neg' y = 1\}$,
and in consequence, $\Box x \leq \Box' x$.
\end{proof}

\begin{lem} \label{dos}
Let {\bf{L}} be a bounded distributive lattice, and $X$ and {\bf{L$_X$}} as before.
If $\Diamond x$ exists in $L$ and $x$, $\neg x$, and $\Diamond x$ are in $X$, then $\Diamond' x = \Diamond x$.
\end{lem}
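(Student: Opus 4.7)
The plan is to mirror the argument of Lemma \ref{uno}, swapping the roles of maximum and minimum, and invoking Lemma \ref{cero} to reduce $\neg' x$ to $\neg x$. By hypothesis $x, \neg x \in X$, so Lemma \ref{cero} immediately gives $\neg' x = \neg x$. Thus
\[
\Diamond' x = \min\{b \in L_X : \neg x \vee b = 1\},
\]
whereas $\Diamond x = \min\{b \in L : \neg x \vee b = 1\}$ by definition.

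For the inequality $\Diamond x \leq \Diamond' x$, I would observe that $L_X \subseteq L$, so
\[
\{b \in L_X : \neg x \vee b = 1\} \subseteq \{b \in L : \neg x \vee b = 1\},
\]
and the minimum over a smaller set is at least the minimum over a larger one (when both exist). Hence $\Diamond x \leq \Diamond' x$.

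For the reverse inequality, the hypothesis $\Diamond x \in X$ (and therefore $\Diamond x \in L_X$) is what does the work: by ($\Diamond$I) applied in $L$ we have $\neg x \vee \Diamond x = 1$, so $\Diamond x$ belongs to the set $\{b \in L_X : \neg x \vee b = 1\}$ over which $\Diamond' x$ is the minimum. Therefore $\Diamond' x \leq \Diamond x$, and the two modal values coincide.

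There is no real obstacle here: the argument is parallel to Lemma \ref{uno}, and the only point requiring attention is making sure the correct hypothesis triggers Lemma \ref{cero}. In Lemma \ref{uno} we needed $\neg \Box x \in X$ to witness membership in the relevant set; here the analogous role is played by $\neg x \in X$ itself (to equate $\neg' x$ with $\neg x$), while $\Diamond x \in X$ is what guarantees that $\Diamond x$ is a candidate in $L_X$ so that the minimum in $L_X$ is no larger than the minimum in $L$.
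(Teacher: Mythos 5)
Your proof is correct and follows essentially the same route as the paper's: both directions are obtained by comparing the defining sets, with Lemma \ref{cero} reducing $\neg' x$ to $\neg x$ and the hypothesis $\Diamond x \in X$ supplying the witness for $\Diamond' x \leq \Diamond x$. The only cosmetic difference is that you invoke the full equality $\neg' x = \neg x$ up front, whereas the paper uses only $\neg' x \leq \neg x$ for the inclusion of sets and the equality for the reverse direction; the substance is identical.
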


\begin{proof}
Since $\neg x \in L_X$, we have, by Lemma \ref{cero}, $\neg' x \leq \neg x$.
Hence,
\[
\{y \in L_X : \neg' x \vee  y = 1\} \subseteq \{y \in L_X : \neg x \vee y = 1\} \subseteq \{y \in L : \neg x \vee  y = 1\}.
\]
Then, $\Diamond x \leq \Diamond' x$.

On the other hand, since $\Diamond x \in L_X$, we have that $\neg' x \vee \Diamond x = \neg x \vee \Diamond x = 1$,
and in consequence, $\Diamond' x \leq \Diamond x$.
\end{proof}

Let $t_1$ and $t_2$ be two terms in the signature $\mathcal{S} := \{\wedge, \vee, \neg, \Box, \Diamond, 0, 1\}$.
We may consider both terms depending of the same finite set of variables, say, $V$.
Write $\mathbf{Term}_\mathcal{S}$ for the algebra of terms in the signature $\mathcal{S}$.
Then, we claim that any equation in $\mathcal{S}$ that has a counterexample in ${\mathbb{ML_\textrm{d}^{\Box \Diamond}}}$,
has a finite counterexample.

\begin{prop}
Let $t_1$ and $t_2$ be two terms in the signature $\mathcal{S}$, {\bf{L}} $\in {\mathbb{ML_\textrm{d}^{\Box \Diamond}}}$,
and $h$ an $\mathcal{S}$-homomorphism from $\mathbf{Term}_\mathcal{S}$ to $\mathbf{L}$ such that $h(t_1) \neq h(t_2)$.
Then there exists a finite element {\bf{L$'$}} $\in {\mathbb{ML_\textrm{d}^{\Box \Diamond}}}$ and an $\mathcal{S}$-homomorphism
$h' : \mathbf{Term}_\mathcal{S} \to \mathbf{L}'$, such that $h'(t_1) \neq h'(t_2)$.
\end{prop}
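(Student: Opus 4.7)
The plan is to prove a finite model property by carefully choosing a finite subset $X \subseteq L$ containing the images of all subterms (plus a few auxiliary elements) and then invoking the three preceding lemmata to show that the operations of the finite sublattice $\mathbf{L}_X$ agree with those of $\mathbf{L}$ on these elements. The desired counterexample $\mathbf{L}'$ will then be $\mathbf{L}_X$ itself.

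More concretely, first I would let $S$ denote the finite set of all subterms of $t_1$ and $t_2$, and define
\[
X := \{h(s) : s \in S\} \cup \{\neg h(s) : s \in S\} \cup \{\neg \Box h(s) : \Box s' \in S \text{ with } s' = s\},
\]
so that $X$ is finite and contains, for every subterm $s$, the image $h(s)$, its complement $\neg h(s)$, and (whenever $\Box s$ appears as a subterm) the element $\neg \Box h(s)$. Next, I would let $\mathbf{L}_X$ be the sublattice of $\mathbf{L}$ generated by $X \cup \{0,1\}$. Since $\mathbf{L}$ is distributive and $X \cup \{0,1\}$ is finite, $\mathbf{L}_X$ is finite; as observed before the preceding lemmata it lies in $\mathbb{ML_\textrm{d}^{\Box \Diamond}}$, with its own operations $\neg', \Box', \Diamond'$.

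Then I would define $h' \colon \mathbf{Term}_\mathcal{S} \to \mathbf{L}_X$ by setting $h'(v) := h(v)$ on each variable $v$ occurring in $t_1$ or $t_2$ (sending all other variables arbitrarily, say to $0$), and extending homomorphically. The crux is to show by induction on the structure of a subterm $s \in S$ that $h'(s) = h(s)$. The base case (variables and constants) is immediate. For the inductive step at $\wedge$ and $\vee$ there is nothing to prove, since the lattice operations of $\mathbf{L}_X$ are the restrictions of those of $\mathbf{L}$. For $\neg s$: by induction $h'(s) = h(s)$, which lies in $X$ together with $\neg h(s)$, so Lemma \ref{cero} yields $\neg' h(s) = \neg h(s)$. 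For $\Box s$: by construction $h(s)$, $\Box h(s) = h(\Box s)$, and $\neg \Box h(s)$ all lie in $X$, hence Lemma \ref{uno} gives $\Box' h(s) = \Box h(s)$. For $\Diamond s$: $h(s)$, $\neg h(s)$, and $\Diamond h(s) = h(\Diamond s)$ all lie in $X$, so Lemma \ref{dos} gives $\Diamond' h(s) = \Diamond h(s)$.

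Having established $h'(s) = h(s)$ for every $s \in S$, in particular $h'(t_1) = h(t_1) \neq h(t_2) = h'(t_2)$, so $\mathbf{L}' := \mathbf{L}_X$ is the required finite counterexample. The only real obstacle is bookkeeping: one must make sure when setting up $X$ that all the auxiliary elements ($\neg h(s)$ and $\neg \Box h(s)$) that the three lemmata require are already present, which is why $X$ is defined to be closed under these specific witnesses rather than merely as $\{h(s) : s \in S\}$.
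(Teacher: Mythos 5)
Your proof is correct and follows essentially the same route as the paper: take $X$ to consist of the $h$-images of all subterms of $t_1$ and $t_2$ together with their meet-complements (your extra witnesses $\neg\Box h(s)$ are already of the form $\neg h(\Box s)$ and hence redundant), pass to the finite sublattice $\mathbf{L}_X$, and use the three preceding lemmata to check agreement of the operations. The only differences are cosmetic — the paper sends the irrelevant variables to $1$ rather than $0$ and leaves the structural induction implicit, which you spell out.
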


\begin{proof}
For any term $t \in \mathrm{Term}_\mathcal{S}$, let us write $\mathrm{Sub}(t)$ for the set of subterms of $t$.
Write $S_0 := \mathrm{Sub}(t_1) \cup \mathrm{Sub}(t_2)$ and $X : = h(S_0) \cup \neg h(S_0) \cup \{0, 1\} \subseteq L$.
Every term in $X$ can only depend on a finite set of variables, say, $V$, which are the variables that appear either in
$t_1$ or in $t_2$. Note also that $X$ contains the negation of every element of $h(S_0)$.
Now, take $L' = L_X$ and $h'$ the unique $\mathcal{S}$-homomorphism from $\mathrm{Term}_\mathcal{S}$ to $L'$ such that
$h'(x_i) = h(x_i)$, if $i \in V$, and $h'(x_i) = 1$, if $i \notin V$.
>From Lemmata \ref{cero}, \ref{uno}, and \ref{dos}, it follows straightforwardly that $h'(t_1) \neq h'(t_2)$.
\end{proof}

\begin{cor}
\label{FMP}
The variety $\mathbb{ML_\textrm{d}^{\Box \Diamond}}$ is generated by its finite elements.
\end{cor}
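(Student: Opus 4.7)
The plan is to deduce the corollary directly from the preceding proposition together with Birkhoff's HSP-style characterization of the variety generated by a class. Let $\mathcal{V} := \mathbb{ML_\textrm{d}^{\Box \Diamond}}$ and let $\mathcal{V}_{\mathrm{fin}}$ denote the class of finite members of $\mathcal{V}$. I want to show that the variety $\mathcal{W}$ generated by $\mathcal{V}_{\mathrm{fin}}$ coincides with $\mathcal{V}$.

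First I would note the easy inclusion $\mathcal{W} \subseteq \mathcal{V}$: since $\mathcal{V}_{\mathrm{fin}} \subseteq \mathcal{V}$ and $\mathcal{V}$ is a variety (hence closed under homomorphic images, subalgebras, and products), the variety generated by $\mathcal{V}_{\mathrm{fin}}$ is contained in $\mathcal{V}$. This is purely formal.

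For the reverse inclusion $\mathcal{V} \subseteq \mathcal{W}$, I would argue equationally. A variety is determined by the set of identities it satisfies, so it suffices to show that every identity $t_1 \approx t_2$ in the signature $\mathcal{S}$ that holds in all of $\mathcal{V}_{\mathrm{fin}}$ also holds in every $\mathbf{L} \in \mathcal{V}$. Suppose for contradiction that some identity $t_1 \approx t_2$ is satisfied throughout $\mathcal{V}_{\mathrm{fin}}$ but fails in some $\mathbf{L} \in \mathcal{V}$; then there exists an $\mathcal{S}$-homomorphism $h : \mathbf{Term}_\mathcal{S} \to \mathbf{L}$ with $h(t_1) \neq h(t_2)$. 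Applying the preceding proposition to this data yields a finite $\mathbf{L}' \in \mathcal{V}$ and a homomorphism $h' : \mathbf{Term}_\mathcal{S} \to \mathbf{L}'$ with $h'(t_1) \neq h'(t_2)$, contradicting the assumption that $t_1 \approx t_2$ holds throughout $\mathcal{V}_{\mathrm{fin}}$. Hence no such failing identity exists, so $\mathcal{V}$ and $\mathcal{W}$ satisfy exactly the same identities and therefore coincide.

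There is essentially no obstacle here: all of the work is done by the preceding proposition, and the corollary is really just a repackaging of ``finite model property implies generation by finite algebras'' for varieties. The only care needed is to invoke the proposition with the correct input (a homomorphism from the term algebra witnessing the failure of the identity), which is exactly the setup of the proposition's hypothesis.
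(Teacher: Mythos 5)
Your proof is correct and is exactly the argument the paper intends: the corollary is stated without proof as an immediate consequence of the preceding proposition, via the standard fact that a variety is determined by its identities, so an identity failing somewhere in $\mathbb{ML_\textrm{d}^{\Box \Diamond}}$ already fails in a finite member. Nothing is missing.
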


Regarding Boolean elements we have the following fact.

\begin{lem} \label{ldBD}
 Let $\bf{A} \in \mathbb{ML_\textrm{d}^{\Box \Diamond}}$ and $a \in A$.
 Then, the following are equivalent:

 (i) $a$ is Boolean,

 (ii) $\Box^{n} a = a$, for all $n \in \mathbb{N^{\ast}}$,

 (iii) $\Diamond^{n} a = a$, for all $n \in \mathbb{N^{\ast}}$.
\end{lem}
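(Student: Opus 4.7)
The plan is to show the cycle (i) $\Rightarrow$ (ii) $\Rightarrow$ (i) and (i) $\Rightarrow$ (iii) $\Rightarrow$ (i), exploiting the two one-box characterizations of Boolean elements already proved (Lemmata \ref{lBB} and \ref{lDB}) together with the inequalities $\Box \leq \circ \leq \Diamond$ valid in the distributive setting (Lemma \ref{(DN)}(i)), which is where distributivity enters essentially.

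First I would prove (i) $\Rightarrow$ (ii). Assume $a \vee \neg a = 1$. By Lemma \ref{lBB} we have $a \leq \Box a$, and by Lemma \ref{(DN)}(i) we have $\Box a \leq a$, so $\Box a = a$. An immediate induction on $n$ gives $\Box^{n} a = a$ for all $n \in \mathbb{N}^{\ast}$: assuming $\Box^{n} a = a$, we get $\Box^{n+1} a = \Box(\Box^{n} a) = \Box a = a$. The converse (ii) $\Rightarrow$ (i) is instantaneous: taking $n=1$ yields $\Box a = a$, hence $a \leq \Box a$, and Lemma \ref{lBB} gives that $a$ is Boolean.

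The equivalence (i) $\Leftrightarrow$ (iii) is dual in structure. Assuming $a$ Boolean, Lemma \ref{lDB} yields $\Diamond a \leq a$, while Lemma \ref{(DN)}(i) gives $a \leq \Diamond a$, whence $\Diamond a = a$, and the same induction propagates this to every $\Diamond^{n} a$. Conversely, (iii) with $n=1$ gives $\Diamond a = a \leq a$, so $\Diamond a \leq a$, and Lemma \ref{lDB} yields that $a$ is Boolean.

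There is no real obstacle here: all the conceptual work has already been done in Lemmata \ref{lBB}, \ref{lDB}, and \ref{(DN)}(i). The only point to be careful about is that the inequalities $\Box a \leq a$ and $a \leq \Diamond a$ require distributivity (they fail in the pentagon, as noted earlier for $\Box$ and $\Diamond$), so the statement genuinely lives in $\mathbb{ML_\textrm{d}^{\Box \Diamond}}$ and not in $\mathbb{ML^{\Box \Diamond}}$; apart from that observation, the proof is a short induction.
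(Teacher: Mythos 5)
Your proof is correct and follows essentially the same route as the paper: the paper's proof simply cites Lemmata \ref{lBB}, \ref{lDB}, and \ref{g(DN)}, where Lemma \ref{g(DN)} is precisely the iterated form of Lemma \ref{(DN)}(i) that you re-derive by your short induction. Your added remark about where distributivity is genuinely needed is accurate but not part of the paper's argument.
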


\begin{proof}
 It follows immediately using Lemmas \ref{lBB}, \ref{lDB}, and \ref{g(DN)}.
\end{proof}

\section{The S-extension}

We have the following result even in the non-distributive case.

\begin{prop} \label{Boxeq}
 Let {\bf{A}}$\in \mathbb{ML^{\Box \Diamond}}$.
 Then, for any $a \in A$, the following are equivalent:

 (i) $\Diamond \Box a \leq \Box a$,

 (ii) $\Box a \leq \Box \Box a$,

 (iii) $\Box a \vee \neg \Box a = 1$,

 (iv) $\Box \Box a = \Box a$.
\end{prop}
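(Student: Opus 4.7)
The plan is to exploit three facts proved earlier: the adjunction (A) of Lemma~\ref{BD}, the Boolean characterization of Lemma~\ref{lBB}, and the general inequality $\Box\Box\leq\Box$ of Lemma~\ref{lmcln}(xi). Each equivalence then reduces to a one-line argument, and the four conditions fall into a little cycle with no real obstruction.

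First I would show (i)$\Leftrightarrow$(ii) using the adjunction $\Diamond x\leq y$ iff $x\leq \Box y$ from Lemma~\ref{BD}(A): setting $x=\Box a$ and $y=\Box a$ gives $\Diamond\Box a\leq \Box a$ iff $\Box a\leq \Box\Box a$. Next I would show (ii)$\Leftrightarrow$(iii) by applying Lemma~\ref{lBB} to the element $\Box a$: an element $b$ is Boolean (i.e.\ $b\vee\neg b=1$) iff $b\leq \Box b$, so in particular $\Box a\vee\neg\Box a=1$ iff $\Box a\leq \Box\Box a$.

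Finally, for (ii)$\Leftrightarrow$(iv) I would combine (ii) with the general inequality $\Box\Box\leq\Box$ from Lemma~\ref{lmcln}(xi). One direction is trivial: if $\Box\Box a=\Box a$, then in particular $\Box a\leq \Box\Box a$. Conversely, if (ii) holds, then together with $\Box\Box a\leq \Box a$ we get $\Box\Box a=\Box a$ by antisymmetry of $\leq$.

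I do not foresee any real obstacle: every step is an immediate appeal to a previously established lemma, and distributivity is not needed (consistent with the statement being placed before any distributivity hypothesis is invoked). The only thing to be careful about is the direction of the adjunction in the first step, since one must instantiate both sides with $\Box a$ rather than with $a$.
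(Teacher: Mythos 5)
Your proof is correct and follows essentially the same route as the paper: (i)$\Leftrightarrow$(ii) via the adjunction (A), (ii)$\Leftrightarrow$(iii) via the Boolean characterization (which is just ($\Box$I) and ($\Box$E) combined, as the paper phrases it), and (ii)$\Leftrightarrow$(iv) via the free inequality $\Box\Box\leq\Box$ and antisymmetry. As a minor aside, your citation of Lemma~\ref{lmcln}(xi) for $\Box\Box\leq\Box$ is actually the correct item, whereas the paper's reference to part (vii) there is a slip.
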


\begin{proof}
 To see that (i), (ii), and (iii) are equivalent to each other just consider that we have both
 $\circ \leq \Box$ iff $\Diamond \leq \circ$ and $\circ \leq \Box$ iff $\circ \vee \neg = 1$,
 due to (A) on the one hand and ($\Box$I) and ($\Box$E) taken together on the other hand, respectively.
 To see that (iv) is equivalent to (ii),
 just remember that we have the inequality $\Box \Box \leq \Box$ for free (see Lemma \ref{lmcln}(vii)).
\end{proof}

We analogously get the dual, which we state without proof.

\begin{prop} \label{Diaeq}
 Let {\bf{A}}$\in \mathbb{ML^{\Box \Diamond}}$.
 Then, for any $a \in A$, the following are equivalent:

 (i) $\Diamond a \leq \Box \Diamond a$,

 (ii) $\Diamond \Diamond a \leq \Diamond a$,

 (iii) $\Diamond a \vee \neg \Diamond a = 1$,

 (iv) $\Diamond \Diamond a = \Diamond a$.
\end{prop}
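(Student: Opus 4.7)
The plan is to mirror the argument that establishes Proposition \ref{Boxeq}, replacing each ingredient by its dual. Concretely, I will use three facts that are already available: the adjunction (A) from Lemma \ref{BD}, the characterization of Boolean elements via $\Diamond$ given by Lemma \ref{lDB} (the dual of Lemma \ref{lBB}), and the unconditional inequality $\Diamond \leq \Diamond \Diamond$ from Lemma \ref{dm}(ix).

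First, to see that (i) and (ii) are equivalent, I would apply (A) with $x = \Diamond a$ and $y = \Diamond a$: this reads $\Diamond \Diamond a \leq \Diamond a$ iff $\Diamond a \leq \Box \Diamond a$, giving the equivalence directly. Next, for (i) $\Leftrightarrow$ (iii), I recall that by Lemma \ref{lDB} an element $b$ is Boolean (i.e., $b \vee \neg b = 1$) iff $\Diamond b \leq b$, while dually by Lemma \ref{lBB} it is Boolean iff $b \leq \Box b$. Applying the latter characterization to $b := \Diamond a$ immediately yields that $\Diamond a \leq \Box \Diamond a$ holds iff $\Diamond a \vee \neg \Diamond a = 1$.

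Finally, (ii) $\Leftrightarrow$ (iv) follows because one inequality is free: Lemma \ref{dm}(ix) gives $\Diamond \leq \Diamond \Diamond$, so $\Diamond a \leq \Diamond \Diamond a$ in general, and therefore $\Diamond \Diamond a \leq \Diamond a$ is equivalent to the equality $\Diamond \Diamond a = \Diamond a$.

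I do not expect any genuine obstacle here, since the proposition is a pure dualization of Proposition \ref{Boxeq} and each step reduces to invoking a lemma already proved in Sections 2--4. The only minor point of care is making sure that the Boolean-element characterization used is the $\Diamond$-version (Lemma \ref{lDB}) for one direction and the $\Box$-version (Lemma \ref{lBB}) for the other, so the chain (i) $\Leftrightarrow$ (iii) goes through with $b = \Diamond a$ without circularity.
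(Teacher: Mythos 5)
Your proof is correct and is essentially the argument the paper intends: Proposition \ref{Diaeq} is stated without proof as the dual of Proposition \ref{Boxeq}, whose proof likewise runs the three-way equivalence through the adjunction (A) and the Boolean-element characterization, and settles (iv) via the free inequality $\Diamond \leq \Diamond\Diamond$ of Lemma \ref{dm}(ix). Your instantiation of (A) at $\Diamond a$ and of Lemma \ref{lBB} at $b=\Diamond a$ is exactly the dualization the paper has in mind.
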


\begin{rem}
 Note that (iii) of Proposition \ref{Boxeq} says that the algebraic version of \emph{Tertium non datur} is available for $\Box$-formulas.
 Analogously, in the case of (iii) of Proposition \ref{Diaeq} for $\Diamond$-formulas.
\end{rem}

Now, by selecting the first inequalities of propositions \ref{Boxeq} and \ref{Diaeq},
we see that all the schemas in those propositions are, in fact, equivalent to each other.

\begin{prop} \label{pS}
 Let {\bf{A}}$\in \mathbb{ML^{\Box \Diamond}}$.
 Then, the following two schemas are equivalent:

 (i) $\Diamond \Box \leq \Box$,

 (ii) $\Diamond \leq \Box \Diamond$.
\end{prop}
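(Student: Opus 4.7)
The plan is to prove the two implications by exploiting the equalities $\Diamond \Box \Diamond = \Diamond$ and $\Box \Diamond \Box = \Box$ established in Lemma \ref{BD}(iii). These allow us to pass from one schema to the other by an appropriate substitution, so no new intermediate characterizations (of the sort collected in Propositions \ref{Boxeq} and \ref{Diaeq}) will be needed for this particular step.

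For (i) $\Rightarrow$ (ii): I assume $\Diamond \Box x \leq \Box x$ holds for every $x \in A$, and want to derive $\Diamond a \leq \Box \Diamond a$ for an arbitrary $a \in A$. The move is to apply the hypothesis at $x = \Diamond a$, obtaining $\Diamond \Box \Diamond a \leq \Box \Diamond a$. Then I rewrite the left-hand side using $\Diamond \Box \Diamond = \Diamond$ to reach exactly $\Diamond a \leq \Box \Diamond a$.

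For (ii) $\Rightarrow$ (i): I assume $\Diamond x \leq \Box \Diamond x$ for every $x \in A$, and want $\Diamond \Box a \leq \Box a$. Here I substitute $x = \Box a$ into the hypothesis, obtaining $\Diamond \Box a \leq \Box \Diamond \Box a$. Then the identity $\Box \Diamond \Box = \Box$ rewrites the right-hand side to $\Box a$, giving the desired inequality.

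Neither direction presents a genuine obstacle: once one notices that $\Diamond$ and $\Box$ factor through each other in the two ways recorded by Lemma \ref{BD}(iii), the equivalence is essentially a one-line substitution in each direction. The only thing to be careful about is keeping the roles of $\Box$ and $\Diamond$ straight when choosing the instance of the hypothesis, so that the composite modality on the left of the substituted inequality matches the left-hand side of one of the two identities $\Diamond \Box \Diamond = \Diamond$ or $\Box \Diamond \Box = \Box$.
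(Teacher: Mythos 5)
Your proof is correct and follows essentially the same route as the paper: the forward direction instantiates (i) at $\Diamond a$ and rewrites via $\Diamond\Box\Diamond=\Diamond$, and your converse is exactly the dualization the paper leaves implicit, using $\Box\Diamond\Box=\Box$. Nothing to add.
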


\begin{proof}
 (i) $\Rightarrow$(ii) Applying the schema in (i) to the argument $\Diamond$ gives
 $\Diamond \Box \Diamond \leq \Box \Diamond$.
 Now, we also have $\Diamond \Box \Diamond = \Diamond$, which is Lemma \ref{BD}(v).

 \noindent (ii) $\Rightarrow$(i) Just dualize the previous proof.
\end{proof}

\begin{rem}
 Cases (i) and (ii) in the previous proposition are the algebraic versions of the S5 axioms in modal logic.
\end{rem}

We define the S-extension by adding to $\mathbb{ML^{\Box \Diamond}}$ the algebraic version of the modal logic S4-schema:

\smallskip

{\bf{(S)}} $\Box \leq \Box \Box$.

\smallskip

\noindent It is clear that all the schemas appearing in propositions 13-14 hold in the S-extension.
In particular, it is clear that it is equivalent to take $\Diamond \Diamond \leq \Diamond$ instead of (S).

We will occasionally consider the following generalization.

\begin{defi}
 For any $n \in \mathbb{N^{\ast}}$, the $S^{n}$-extension is the extension of $\mathbb{ML^{\Box \Diamond}}$
with $\Box^{n+1}= \Box^n$.
\end{defi}

Note that $\Box$ still exists in the pentagon.
So, expanding meet-complemented lattices with $\Box$ having (S) does not imply modularity,
and so, it does not imply distributivity.
Also, we still do not have $\neg \Box \neg \leq \Diamond$,
as may be seen in the pentagon taking the argument to be the non-atom-coatom.

Let us use the notation $\mathbb{ML_\textrm{S}^{\Box \Diamond}}$
for the class of meet-complemented lattices with $\Box$ and $\Diamond$ that satisfy (S).

The following result will be useful.

\begin{lem} \label{lS}
 Let us consider a lattice of ${\mathbb{ML_\textrm{S}^{\Box \Diamond}}}$. Then,
 (i) $\Diamond \neg \Box \leq \neg \Box$,
 (ii) $\Box \neg \Box = \neg \Box$,
 (iii) $\Box \Diamond \neg \Box = \neg \Box$,
 (iv) $\neg \Box \neg = \Box \Diamond$,
 (v) $\neg \Box \Diamond = \Box \neg$.
\end{lem}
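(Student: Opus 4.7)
My plan leans on the fact, coming from Propositions \ref{Boxeq}(iii) and \ref{Diaeq}(iii), that in $\mathbb{ML_\textrm{S}^{\Box \Diamond}}$ both $\Box a$ and $\Diamond a$ are Boolean for every $a\in A$. As a preliminary remark, the meet-complement of a Boolean element is again Boolean: if $b\vee\neg b=1$, then since $b\leq\neg\neg b$, Corollary \ref{cL1} gives $\neg b\vee\neg\neg b=1$.

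For (i), (ii), and (iii) I would apply the Boolean characterizations. Since $\Box a$ is Boolean, so is $\neg\Box a$, and Lemma \ref{lDB} then yields $\Diamond\neg\Box a\leq\neg\Box a$, which is (i). For (ii), one direction is the instance $\Box\neg\Box a\leq\neg\Box a$ of Lemma \ref{lmcln}(iv), while the reverse follows by applying Lemma \ref{lBB} to the Boolean element $\neg\Box a$. For (iii), $\Box$-monotonicity applied to (i), together with (ii), gives $\Box\Diamond\neg\Box a\leq\Box\neg\Box a=\neg\Box a$, and the reverse is the instance of the Brouwerian inequality $\circ\leq\Box\Diamond$ of Lemma \ref{BD}(B1).

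The main work sits in (iv), where my strategy is to first establish the auxiliary equality $\Box b=\neg\neg b$ for every Boolean $b\in A$. The inequality $\neg\neg b\leq\Box b$ comes from $b\leq\Box b$ (Lemma \ref{lBB}) by $\neg$-antimonotonicity applied twice, together with the identity $\neg\neg\Box=\Box$ of Lemma \ref{(DN)B}. For the reverse $\Box b\leq\neg\neg b$ I would show $\Box b\wedge\neg b=0$: since $\neg b$ is also Boolean, Lemma \ref{lBB} combined with Lemma \ref{lmcln}(iv) gives $\Box\neg b=\neg b$, so that $\Box b\wedge\neg b=\Box b\wedge\Box\neg b=0$ by Proposition \ref{BoxIn}(v). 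Applying the auxiliary equality to the Boolean element $\Diamond a$ gives $\Box\Diamond a=\neg\neg\Diamond a$, and combining with $\neg\neg\Diamond=\neg\Box\neg$ from Proposition \ref{NBD}(iii) yields (iv).

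Finally, (v) should follow by applying $\neg$ to both sides of (iv) and using Lemma \ref{(DN)B} with argument $\neg a$: $\neg\Box\Diamond=\neg\neg\Box\neg=\Box\neg$. The obstacle I anticipate is the auxiliary step $\Box b=\neg\neg b$ for Boolean $b$, which relies on the non-trivial observation that $\neg b$ remains Boolean and therefore becomes a fixed point of $\Box$; once that is in hand, every item of the statement reduces to short manipulations of previously established identities.
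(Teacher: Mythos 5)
Your proof is correct, and for the substantive items it takes a genuinely different route from the paper's. The organizing idea --- that in the S-extension $\Box a$ and $\Diamond a$ are Boolean (via Propositions~\ref{Boxeq} and~\ref{Diaeq}), that $\neg$ preserves Booleanness, and that $\Box b=\neg\neg b$ for Boolean $b$ --- is not how the paper proceeds. The paper proves (i) from $\Diamond\neg\leq\neg\Box$ (Proposition~\ref{NBD}) instantiated at $\Box a$ together with (S), proves the nontrivial half of (ii) by verifying $\neg\Box\vee\neg\neg\Box=1$ and invoking ($\Box$I) directly (which is really your argument in disguise, since that identity just says $\neg\Box a$ is Boolean and Lemma~\ref{lBB} is proved from ($\Box$I)/($\Box$E)), proves (iv) by a chain of $\neg$-antimonotonicity and $\Box$-monotonicity steps starting from (B1) and passing through (ii), and proves (v) by applying $\neg$-antimonotonicity to $\Diamond\leq\Box\Diamond$ and using $\neg\Diamond=\Box\neg$. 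Your treatment of (iii) coincides with the paper's. What your approach buys is uniformity and reusability: the auxiliary identity $\Box b=\neg\neg b$ for Boolean $b$ (whose two halves you correctly reduce to Lemma~\ref{lBB}, Lemma~\ref{(DN)B}, the fixed-point equation $\Box\neg b=\neg b$, and Proposition~\ref{BoxIn}(v)) makes (iv) a one-line consequence of $\neg\neg\Diamond=\neg\Box\neg$, and (v) then falls out by negating the equality (iv) and applying $\neg\neg\Box=\Box$, which is cleaner than the paper's separate derivation. What the paper's route buys is independence from the Boolean machinery: its manipulations use only the monotonicity facts and Proposition~\ref{NBD}, so each item is self-contained. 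All of your intermediate citations check out (in particular $\Box\neg\leq\neg$ really is Lemma~\ref{lmcln}(iv), and your use of Corollary~\ref{cL1} to show $\neg b$ is Boolean is exactly right), so there is no gap.
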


\begin{proof}
 (i) Using Proposition \ref{NBD}(iii) we have $\Diamond \neg \Box \leq \neg \Box \Box$.
 Now, using (S) we have $\neg \Box \Box = \neg \Box$.

 \noindent (ii) The inequality $\Box \neg \Box \leq \neg \Box$ holds because of Lemma \ref{lmcln}(ii).
 To get the other inequality we use ($\Box$I), for which it is enough to have $\neg \Box \vee \neg \neg \Box =1$,
 which follows in the S extension as we have $\Box \vee \neg \Box=1$ and $\circ \leq \neg \neg$.

 \noindent (iii) Applying $\Box$-monotonicity to (i) we get $\Box \Diamond \neg \Box \leq \Box \neg \Box$.
 Applying (ii) we get $\Box \Diamond \neg \Box \leq \neg \Box$.
 To get the other inequality apply (B1).

 \noindent (iv) Applying $\neg$-antimonotonicity to (B1), we get $\neg \Box \Diamond \leq \neg$.
 Applying $\Box$-monotonicity we get $\Box \neg \Box \Diamond \leq \Box \neg$.
 Applying $\neg$-antimonotonicity again, we get $\neg \Box \neg \leq \neg \Box \neg \Box \Diamond$,
 which, using(ii), gives $\neg \Box \neg \leq \neg \neg \Box \Diamond$, which, using $\neg \neg \Box = \Box$
 (see Lemma \ref{lmcln}(v)), finally gives $\neg \Box \neg \leq \Box \Diamond$.
 The other inequality appears in Proposition \ref{NBD}(v).

 \noindent (v) Applying $\neg$-antimonotonicity to Proposition \ref{pS}(iii), we get $\neg \Box \Diamond \leq \neg \Box$.
 Then, by Proposition \ref{NBD}(ii), we have $\neg \Box \Diamond \leq \Box \neg$.
 The other inequality appears in Proposition \ref{NBD}(vi).
\end{proof}

It is natural to consider what happens now with the modalities.
With that aim, consider the following results.
The next proposition concerns the positive modalities.

\begin{prop}
  Let us consider a lattice of ${\mathbb{ML_\textrm{S}^{\Box \Diamond}}}$. Then,
 (i) $\Diamond \Box \leq \circ$,
 (ii) $\Diamond \Box \leq \Box$,
 (iii) $\Diamond \Box \leq \Diamond \Box \neg \neg$,
 (iv) $\circ \leq \neg \neg$,
 (v) $\Box \leq \Box \neg \neg$,
 (vi) $\Diamond \Box \neg \neg \leq \Box \neg \neg$,
 (vii) $\Diamond \Box \neg \neg \leq \Diamond$,
 (viii) $\Box \neg \neg \leq \neg \neg$,
 (ix) $\Diamond \leq \Box \Diamond$,
 (x) $\neg \neg \leq \Box \Diamond$.
\end{prop}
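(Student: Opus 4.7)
The plan is to dispatch each of the ten inequalities by citing an already-established fact and, where necessary, combining it with monotonicity of $\Box$, $\Diamond$, or antimonotonicity of $\neg$. Only items (ii) and (ix) truly need the S-assumption; the rest already hold in $\mathbb{ML^{\Box \Diamond}}$, and are collected here for the sake of a complete picture of the positive $\neg\neg$-interactions with $\Box$ and $\Diamond$ in the S-extension.

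First I would handle the ``base'' items that do not rely on (S). Item (i) is simply (B2) of Lemma \ref{BD}. Item (iv) is (DN). Item (v) is Lemma \ref{lmcln}(ii), and item (viii) is Lemma \ref{lmcln}(vii). Item (x) is Lemma \ref{BD}(i). At this point the only remaining ``atomic'' ingredient from earlier material I still need is Lemma \ref{D(DN)}, namely $\Diamond\neg\neg=\Diamond$, which I will use in (vii).

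Next I would invoke the S-axiom to get items (ii) and (ix). For (ii), the hypothesis (S) reads $\Box\leq\Box\Box$, so applying the adjunction (A) to the right-hand side yields at once $\Diamond\Box\leq\Box$; alternatively one just quotes Proposition \ref{pS}(i). For (ix), Proposition \ref{pS} shows that (i) and (ii) of that proposition are equivalent, so (ii) immediately gives $\Diamond\leq\Box\Diamond$.

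Finally, the three ``composite'' items follow by monotonicity. For (iii), apply (iv) to obtain $\circ\leq\neg\neg$, then $\Box$-monotonicity gives $\Box\leq\Box\neg\neg$, and $\Diamond$-monotonicity yields $\Diamond\Box\leq\Diamond\Box\neg\neg$. For (vi), instantiate the schema (ii) already established, with argument $\neg\neg a$, to get $\Diamond\Box\neg\neg\leq\Box\neg\neg$. For (vii), start from (viii) $\Box\neg\neg\leq\neg\neg$, apply $\Diamond$-monotonicity to obtain $\Diamond\Box\neg\neg\leq\Diamond\neg\neg$, and close with Lemma \ref{D(DN)}, which states $\Diamond\neg\neg=\Diamond$. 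Since every step reduces to a quoted lemma plus monotonicity, there is no serious obstacle; the only mild care needed is to remember that (vii) routes through Lemma \ref{D(DN)} rather than through (S), so as not to over-use the S-axiom when it is not needed.
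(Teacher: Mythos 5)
Your proof is correct and follows essentially the same route as the paper's: each item is discharged by citing (B2), (DN), Lemma \ref{lmcln}, Lemma \ref{BD}(i), or Proposition \ref{pS}, with (iii) and (vi) obtained by monotonicity/instantiation exactly as in the paper. The only genuine divergence is item (vii), which the paper proves directly from ($\Diamond$E) and ($\Diamond$I) (showing $\neg\Box\neg\neg \vee \Diamond = 1$), whereas you apply $\Diamond$-monotonicity to (viii) and then use $\Diamond\neg\neg=\Diamond$; both arguments are valid and yours is arguably the cleaner of the two. One small inaccuracy in your framing: you claim only (ii) and (ix) need (S), but (vi) is itself an instance of the schema $\Diamond\Box\leq\Box$ (at the element $\neg\neg a$) and so also depends on (S) --- as indeed your own derivation of it through (ii) shows.
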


\begin{proof}
 \noindent (i) is (B2).

 \noindent (ii) is Proposition \ref{pS}(ii).

 \noindent (iii) follows from $x \leq \neg \neg$ using monotonicity of both $\Box$ and $\Diamond$.

 \noindent (iv) is clear.

 \noindent (v) follows from $\circ \leq \neg \neg$ using $\Box$-monotonicity.

 \noindent (vi) follows by Proposition \ref{pS}(ii).

 \noindent (vii) Using ($\Diamond$E) it is enough to prove $\neg \Box \neg \neg \vee \Diamond = 1$.
 Now, by ($\Diamond$I), we have $\neg \vee \Diamond = 1$ and using Lemma \ref{lmcln}(ii),
 we may get $\neg \leq \neg \Box \neg \neg$.

 \noindent (viii) is a particular case of Lemma \ref{lmcln}(ii).

 \noindent (xi) is Proposition \ref{pS} (iii).

 \noindent (x) is Lemma \ref{BD}(i).
\end{proof}

It can also be seen that the reverse inequalities are not the case.
For that purpose, as before, $a, b, c$ will be the atom non-coatom, the atom-coatom,
and the coatom non-atom of the pentagon, respectively.
Also, $l$ and $m$ will be any of the atoms and the coatom of the (distributive) lattice $2^{2} \oplus 1$, respectively.
We now consider each case of the previous proposition.
(i) Take $l$ or $m$.
(ii) Take $a$.
(iii) Take $m$.
(iv) Take $m$.
(v) Take $m$.
(vi) Take $c$.
(vii) Take $l$.
(viii) Take $l$.
(ix) Take $c$.
(x) Take $l$.

We also need to show the cases where the elements are incomparable.
That is, using the symbol $\parallel$ for non-comparability,
we need to show that $\circ \parallel \Box$, $\circ \parallel \Diamond \Box \neg \neg$, $\circ \parallel \Box \neg \neg$, $\circ \parallel \Diamond$,
$\Box \parallel \Diamond \Box \neg \neg$, $\Box \parallel \Diamond$, $\Box \neg \neg \parallel \Diamond$, and $\neg \neg \parallel \Diamond$.
That means we have to falsify sixteen inequalities.
We now see it is enough to falsify the half.

The node $l$ falsifies both $\circ \leq \Box \neg \neg$ and $\Diamond \leq \neg \neg$,
the first of which implies the falsification of both $\circ \leq \Box$ and $\circ \leq \Diamond \Box \neg \neg$,
and the second the falsification of all three  $\Diamond \leq \circ$, $\Diamond \leq \Box \neg \neg$, and $\Diamond \leq \Box$.

The node $m$ falsifies $\Box \neg \neg \leq \circ$, $\Diamond \Box \neg \neg \leq \circ$ , and $\Diamond \Box \neg \neg \leq \Box$.

The node $a$ falsifies $\Box \leq \circ$.

The node $c$ falsifies $\circ \leq \Diamond$ and $\Box \leq \Diamond$,
the first of which implies the falsification of $\neg \neg \leq \Diamond$,
and the second the falsification of both $\Box \leq \Diamond \Box \neg \neg$ and $\Box \neg \neg \leq \Diamond$.

\begin{figure} [ht]
\begin{center}

\begin{tikzpicture}

  \tikzstyle{every node}=[draw, circle, fill=white, minimum size=4pt, inner sep=0pt, label distance=1mm]

    \draw (0,0) node (BD) [label=right:$\Box \Diamond$] {}
        -- ++(270:1cm) node (nn) [label=left:$\neg \neg$] {}
        -- ++(270:1cm) node (Bnn) [label=left:$\Box \neg \neg$] {}
        -- ++(225:1cm) node (B) [label=left:$\Box$] {}
        -- ++(315:1cm) node (DB) [label=right:$\Diamond \Box$] {}
        -- ++(45:1cm) node (DBnn) [label=right:$\Diamond \Box \neg \neg$] {}
        -- ++(90:1cm) node (D) [label=right:$\Diamond$] {}
        -- (BD)
        -- ++(270:2cm) node (Bnn) [] {}
        -- ++(225:1cm) node (B) [] {}
        -- ++(315:1cm) node (DB) [] {}
        -- ++(157.25:2.1cm) node (c) [label=left:$\circ$] {}
        -- (nn);

    \draw (Bnn) -- (DBnn);

\end{tikzpicture}

\end{center}
\caption{\label{pm} The positive modalities in the S-extension}
\end{figure}





The next proposition concerns the negative modalities.

\begin{prop}
 Let us consider a lattice of ${\mathbb{ML_\textrm{S}^{\Box \Diamond}}}$. Then,
 (i) $\Diamond \Box \neg \leq \Box \neg$,
 (ii) $\Diamond \Box \neg \leq \Diamond \neg$,
 (iii) $ \Box \neg \leq \neg$,
 (iv) $\neg \leq \Box \Diamond \neg$,
 (v) $\Diamond \neg \leq \Box \Diamond \neg$,
 (vi) $\Diamond \neg \leq \Diamond \neg \Box$,
 (vii) $\Box \Diamond \neg \leq \neg \Box$,
 (viii) $\Diamond \neg \Box \leq \neg \Box$.
\end{prop}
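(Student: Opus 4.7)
The plan is to dispatch each of the eight inequalities by substitution into results already established, combined with monotonicity or antimonotonicity. I would begin with the items that reduce to a single earlier step. Item (iii) is exactly Lemma \ref{lmcln}(iv), and item (viii) is precisely Lemma \ref{lS}(i). For (i), I would use that in the S-extension $\Diamond \Box \leq \Box$ (this is part of Proposition \ref{Boxeq}) and substitute the argument $\neg a$. For (iv), I would apply (B1), i.e.\ $\circ \leq \Box \Diamond$, to $\neg a$; for (v), Proposition \ref{pS}(ii), $\Diamond \leq \Box \Diamond$, applied to $\neg a$. Item (ii) then follows from (iii) by $\Diamond$-monotonicity: from $\Box \neg \leq \neg$ one obtains $\Diamond \Box \neg \leq \Diamond \neg$.

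The two interesting cases are (vi) and (vii). Both hinge on the auxiliary inequality $\Box \leq \neg \neg$, which is not singled out in Lemma \ref{lmcln} but follows by chaining two of its parts: from (DN) we have $\circ \leq \neg \neg$, so $\Box$-monotonicity gives $\Box \leq \Box \neg \neg$, and Lemma \ref{lmcln}(vii) gives $\Box \neg \neg \leq \neg \neg$. For (vi), $\neg$-antimonotonicity combined with (TN) turns $\Box \leq \neg \neg$ into $\neg \leq \neg \Box$, and then $\Diamond$-monotonicity delivers $\Diamond \neg \leq \Diamond \neg \Box$. For (vii), I would use Lemma \ref{lS}(iv), $\neg \Box \neg = \Box \Diamond$, instantiated at $\neg a$, to rewrite $\Box \Diamond \neg a$ as $\neg \Box \neg \neg a$; the remaining inequality $\neg \Box \neg \neg \leq \neg \Box$ is, by $\neg$-antimonotonicity, equivalent to $\Box \leq \Box \neg \neg$, which is again (DN) pushed through $\Box$-monotonicity.

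The main obstacle is noticing the unstated auxiliary $\Box \leq \neg \neg$; once it is available, the remaining manipulations in (vi) and (vii) collapse to routine applications of monotonicity, antimonotonicity, (DN), (TN), and the S-extension equalities of Lemma \ref{lS}. A minor trap is that one might be tempted to use the distributive-extension inequality $\circ \leq \Diamond$ from Lemma \ref{(DN)}(i) to shortcut (ii) via (iii); this should be avoided since the present section does not assume distributivity, and the direct route through $\Diamond$-monotonicity already suffices.
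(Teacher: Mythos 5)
Your proof is correct, and for items (i)--(v) it is essentially the paper's argument: the paper routes (i), (iv), (v) through the adjunction (A) while you invoke the already-packaged schemas $\Diamond\Box\leq\Box$, (B1), and $\Diamond\leq\Box\Diamond$, but these are the same facts. The genuine divergences are in (vi)--(viii). For (viii) you simply observe that the claim is literally Lemma \ref{lS}(i), whereas the paper re-derives it from $\Diamond\neg\leq\neg\Box$ and (S); your shortcut is cleaner. For (vii) the paper applies $\Box$-monotonicity to $\Diamond\neg\leq\neg\Box$ and then uses $\Box\neg\Box=\neg\Box$, while you rewrite $\Box\Diamond\neg$ as $\neg\Box\neg\neg$ via Lemma \ref{lS}(iv) and finish with $\neg\Box\neg\neg\leq\neg\Box$ (which, note, is exactly Lemma \ref{lmcln}(iii), so you need not re-derive it from $\Box\leq\Box\neg\neg$); both are two-step arguments of equal weight. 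The most interesting difference is (vi): the paper applies $\Diamond$-monotonicity to $\Diamond\neg\leq\neg\Box$ and then collapses $\Diamond\Diamond$ to $\Diamond$ using (S), whereas your route --- $\Box\leq\neg\neg$, hence $\neg\leq\neg\Box$, hence $\Diamond\neg\leq\Diamond\neg\Box$ --- never uses (S) at all, showing this inequality already holds in $\mathbb{ML^{\Box\Diamond}}$. (Your auxiliary $\neg\leq\neg\Box$ could also be obtained directly by chaining Lemma \ref{lmcln}(ix) and (v).) Your closing caution about not importing $\circ\leq\Diamond$ from the distributive extension is well taken.
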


\begin{proof}
 (i) follows using (A) from $\Box \neg \leq \Box \Box \neg$, which holds using (S).

 \noindent (ii) follows applying $\Diamond$-monotonicity to $\Box \neg \leq \neg$,
 which is Lemma \ref{lmcln}(ii).

 \noindent (iii) is Lemma \ref{lmcln}(ii).

 \noindent (iv) follows from $\Diamond \neg \leq \Diamond \neg$ using (A).

 \noindent (v) follows using (A) from $\Diamond \Diamond \neg \leq \Diamond \neg$, which holds in the S extension.

 \noindent (vi) follows applying $\Diamond$-monotonicity to $\Diamond \neg \leq \neg \Box$,
 which is Proposition \ref{NBD}(iii), and then using $\Diamond \Diamond =\Diamond$.

 \noindent (vii) follows applying $\Box$-monotonicity to $\Diamond \neg \leq \neg \Box$,
 which is Proposition \ref{NBD}(iii), and then use $\Box \neg \Box = \neg \Box$ (see Lemma \ref{lS}(ii)).

 \noindent (viii) As $\Diamond \neg \leq \neg \Box$ (see Proposition \ref{NBD}(iii)),
 we have $\Diamond \neg \Box \leq \neg \Box \Box$.
 The result follows using (S).
\end{proof}

As in the case of the positive modalities, we may see that the reverse inequalities are not the case.
For that purpose, we use again $a, b, c, l, m$ as in the positive case.
We now consider each case of the previous proposition.
(i) Take $b$.
(ii) Take $l$.
(iii) Take $l$.
(iv) Take $l$.
(v) Take $b$.
(vi) Take $m$.
(vii) Take $m$.
(viii) Take $b$.

We also need to see (i) that both $\Box \neg$ and $\neg$ are incomparable with $\Diamond \neg$ and
(ii) that $\Box \Diamond \neg$ and $\Diamond \neg \Box$ are incomparable.
In order to see (i) it is enough, as $\Box \neg \leq \neg$, to see that $l$ falsifies
$\Diamond \neg \leq \Box \neg$ and that $b$ falsifies $\neg \leq \Diamond \neg$.
In order to get (ii), we easily see that $b$ falsifies $\Box \Diamond \neg \leq \Diamond \neg \Box$
and $m$ falsifies the reciprocal inequality.

\begin{figure} [ht]
\begin{center}

\begin{tikzpicture}

  \tikzstyle{every node}=[draw, circle, fill=white, minimum size=4pt, inner sep=0pt, label distance=1mm]

    \draw (0,0)            node       [label=left:$\Box \Diamond \neg$] {}
        -- ++(225:1cm)     node (c)   [label=left:$\neg$] {}
        -- ++(270:1cm)     node (Bn)  [label=left:$\Box \neg$] {}
        -- ++(315:1cm)     node (DBn) [label=right:$\Diamond \Box \neg$] {}
        -- ++(60:1.4142cm) node (Dn)  [label=right:$\Diamond \neg$] {}
        -- ++(30:1cm)      node (DnB) [label=right:$\Diamond \neg \Box$] {}
        -- ++(120:1.35cm)   node (nB)  [label=right:$\neg \Box$] {}
        -- (0,0);

    \draw (0,0) -- (Dn);
\end{tikzpicture}

\end{center}
\caption{\label{nm} The negative modalities in the S-extension}
\end{figure}
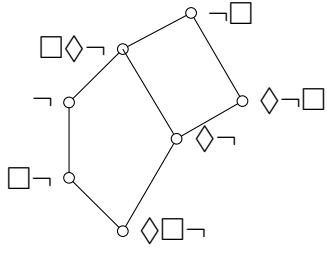

Now that we have seen that we have at least the inequalities indicated, we need to show that there are no more.

\begin{prop}
 The only modalities are the ones appearing in figures \ref{pm} and \ref{nm}.
\end{prop}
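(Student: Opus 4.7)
The plan is to show that the 15-element set $\mathcal{M}$ depicted in Figures \ref{pm} and \ref{nm} (including $\circ$) is closed under left-composition with each of $\neg$, $\Box$, and $\Diamond$. Once this closure is verified, a straightforward induction on the length of a modality shows that every modality equals some element of $\mathcal{M}$. Distinctness among the 15 modalities has already been witnessed by the counterexample nodes supplied immediately after Lemma \ref{lmcln} and after the S-extension propositions, so the remaining task is really just the closure check.

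The reduction rules that drive the computation are: idempotence, namely $\Box\Box = \Box$ (the S-axiom), $\Diamond\Diamond=\Diamond$ (Proposition \ref{Diaeq}), and $\neg\neg\neg=\neg$ from (TN); the absorption identities $\neg\neg\Box=\Box$ (Lemma \ref{(DN)B}) and $\Diamond\neg\neg=\Diamond$ (Lemma \ref{D(DN)}); the dualities $\neg\Diamond=\Box\neg$ and $\neg\Diamond\neg=\Box\neg\neg$ from Proposition \ref{NBD}(ii),(iv); the triple-sandwich equalities $\Box\Diamond\Box=\Box$ and $\Diamond\Box\Diamond=\Diamond$ from Lemma \ref{BD}(iii); and the genuinely S-specific identities $\neg\Box\neg=\Box\Diamond$, $\neg\Box\Diamond=\Box\neg$, $\Box\neg\Box=\neg\Box$, and $\Box\Diamond\neg\Box=\neg\Box$ collected in Lemma \ref{lS}. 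With these in hand, each of the 45 products $\neg M$, $\Box M$, $\Diamond M$ (for $M\in\mathcal{M}$) reduces in one or two steps. For example, $\neg\Diamond\Box\neg = \Box\neg\Box\neg = \neg\Box\neg = \Box\Diamond$; $\neg\Diamond\neg\Box = \Box\neg\neg\Box = \Box\Box = \Box$; $\Box\Diamond\neg\Box = \neg\Box$ directly from Lemma \ref{lS}(iii); and the most delicate case $\neg\Box\neg\neg = \Box\Diamond\neg$ is obtained by instantiating the identity $\neg\Box\neg=\Box\Diamond$ at the argument $\neg x$, which then propagates to give $\neg\Diamond\Box\neg\neg = \Box\neg\Box\neg\neg = \neg\Box\neg\neg = \Box\Diamond\neg$.

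The main obstacle I expect is purely organizational: enumerating all 45 closure checks without duplication or omission, and being careful that every simplification I invoke is available from the preceding sections (in particular that I do not accidentally use a distributivity-dependent fact from Section \ref{five}, since the current section allows non-distributive lattices). A secondary subtlety is that a handful of the entries --- those passing through $\neg\neg\Box=\Box$ or through $\Box\neg\Box=\neg\Box$ --- require the two identities to be applied in the correct order; writing the 3-by-15 table explicitly, with one line per reduction, keeps this under control. Once the table is filled in and every entry is seen to lie in $\mathcal{M}$, the induction on modality length closes the proof.
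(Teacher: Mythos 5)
Your proposal is correct, and every identity you invoke is indeed available in the non-distributive S-extension (Lemma \ref{lS}, Proposition \ref{NBD}(ii), Lemma \ref{BD}(iii), Lemma \ref{(DN)B}, Lemma \ref{D(DN)}, (TN), and the S-schema in both its $\Box$ and $\Diamond$ forms); I spot-checked the closure table and all $45$ products do land back in the $15$-element set. The underlying computation is the same as the paper's --- the paper lists essentially the same catalogue of reduction identities ($\Box\Box=\Box$, $\Diamond\Diamond=\Diamond$, $\neg\neg\neg=\neg$, $\neg\Diamond=\Box\neg$, $\Diamond\Box\Diamond=\Diamond$, $\Box\Diamond\Box=\Box$, $\neg\neg\Box=\Box$, $\Diamond\neg\neg=\Diamond$, $\Box\neg\Box=\neg\Box$, $\neg\Box\neg=\Box\Diamond$, $\neg\Box\Diamond=\Box\neg$, $\Box\Diamond\neg\Box=\neg\Box$) --- but the organization differs. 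The paper enumerates modalities length by length ($4$ of length one, $9$ of length two, $27$ of length three, then the words of length four that begin or end in an irreducible length-three word, and finally length five), and terminates by observing that no new irreducible word survives at length five. Your closure-under-left-composition argument followed by induction on length is cleaner in one concrete respect: it makes the termination automatic and verifiable by a single finite table, whereas the paper's bookkeeping at lengths four and five (``twenty-four length four modalities to be considered'') requires the reader to trust an implicit enumeration. The cost is that you must check all $45$ products rather than only the words the paper's pruning leaves alive, but several of those checks are trivial (prefixing by the leftmost symbol of a word already in reduced form). Either way the mathematical content is identical; your write-up would be a legitimate, arguably more transparent, substitute for the paper's proof.
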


\begin{proof}
 All the posible (four) modalities of length one, that is $\circ$, $\neg$, $\Box$, and $\Diamond$ have been considered.

 Regarding the nine possible modalities of length two, it is easily seen that we may eliminate $\neg \Diamond$,
 because it equals $\Box \neg$ and that we may eliminate both $\Box \Box$ and $\Diamond \Diamond$,
 because we are dealing with modalities in the S extension.
 The remaining six modalities of length two have already been considered.

 Regarding the twenty seven possible modalities of length three, it is easy to eliminate twenty three of them
 considering the following well established equalities: $\Box \Box = \Box$, $\Diamond \Diamond = \Diamond$,
 $\neg \neg \neg = \neg$, $\neg \Diamond = \Box \neg$, $\Diamond \Box \Diamond = \Diamond$, $\Box \Diamond \Box = \Box$,
 $\neg \neg \Box = \Box$, $\Diamond \neg \neg = \Diamond$, $\Box \neg \Box = \neg \Box$, $\neg \Box \neg = \Box \Diamond$,
 and $\neg \Box \Diamond = \Box \neg$.
 The four remaining ones, that is, $\Box \neg \neg$, $\Box \Diamond \neg$, $\Diamond \neg \Box$, and $\Diamond \Box \neg$,
 have already been considered.

 Regarding the possible modalities of length four, they either begin or end with one of the four remaining combinations
 of length three already considered. This gives twenty-four length four modalities to be considered.
 Using previous mentioned equalities and also that $\neg \Box \Diamond = \Box \neg$ and $\Box \Diamond \neg \Box = \neg \Box$,
 the only new modality is $\Diamond \Box \neg \neg$, which we have already considered as positive modality.

 Regarding possible modalities of length five, they either begin or end with the only modality of length four.
 That gives six possible modalities.
 However, all of them may be reduced using already mentioned equalities.
\end{proof}

We will use the following in the next section.

\begin{lem} \label{BnB}
 Let {\bf{A}}$\in \mathbb{ML^{\Box \Diamond}}$ in the $S^{n}$-extension.
 Then, for any $a \in A$, $\Box^{n}a$ is Boolean.
\end{lem}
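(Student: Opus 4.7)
The plan is to reduce the statement to the Boolean characterization given by Lemma \ref{lBB}, which says that an element $x$ is Boolean iff $x \leq \Box x$. Applied to $x = \Box^n a$, what I need to show is $\Box^n a \leq \Box(\Box^n a) = \Box^{n+1} a$.

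At this point the $S^n$-extension hypothesis $\Box^{n+1} = \Box^n$ does the work for me: the desired inequality $\Box^n a \leq \Box^{n+1} a$ collapses to $\Box^n a \leq \Box^n a$, which is reflexivity of $\leq$. Hence $\Box^n a \leq \Box(\Box^n a)$, and Lemma \ref{lBB} gives that $\Box^n a$ is Boolean, i.e.\ $\Box^n a \vee \neg \Box^n a = 1$.

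There is essentially no obstacle here; the only thing one might want to double-check is that Lemma \ref{lBB} is indeed available in $\mathbb{ML^{\Box \Diamond}}$ (it is, since it was proved in $\mathbb{ML^{\Box}}$ from $(\Box\text{I})$ and $(\Box\text{E})$ alone), and that the $S^n$-extension hypothesis is really the equality $\Box^{n+1} = \Box^n$ (as stated in the definition preceding the lemma), rather than merely one of the inequalities. Given both, the proof is a one-line application of Lemma \ref{lBB} together with the defining identity of the $S^n$-extension.
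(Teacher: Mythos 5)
Your proof is correct and is precisely the argument the paper leaves implicit (its proof reads only ``Immediate''): the identity $\Box^{n+1}=\Box^{n}$ gives $\Box^{n}a\leq\Box(\Box^{n}a)$, and Lemma \ref{lBB} then yields that $\Box^{n}a$ is Boolean. Nothing further is needed.
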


\begin{proof}
 Immediate.
\end{proof}


\section{The distributive S-extension}

Now, let us extend with both distributivity and (S),  using the notation $\mathbb{ML_\textrm{dS}^{\Box \Diamond}}$
for the class of distributive meet-complemented lattices with $\Box$ and $\Diamond$ that satisfy (S).
We will see that, as expected, the number of modalities diminishes.

\begin{prop}
  Let us consider a lattice of ${\mathbb{ML_\textrm{dS}^{\Box \Diamond}}}$. Then,
 (i) $\Diamond \neg \Box=\neg \Box$,
 (ii) $\neg \neg \Diamond \leq \Diamond$,
 (iii) $ \neg \Box \neg \leq \Diamond$,
 (iv) (Definability of $\Diamond$) $\Diamond = \neg \Box \neg$.
\end{prop}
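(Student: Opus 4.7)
The plan is to assemble each part directly from previously established identities, exploiting two bundles of facts: from distributivity we have $\circ \leq \Diamond$ and hence $\Box \leq \Diamond$ (Lemma \ref{(DN)}(i)), while from the S-extension we have $\Diamond\Diamond = \Diamond$ (Proposition \ref{Diaeq}) and the key rewrites $\neg\Box\neg = \Box\Diamond$ and $\Box\neg\Box = \neg\Box$ (Lemma \ref{lS}). Nothing here appears to require a genuinely new argument; the work is in picking the right combination.

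For (i), one inequality $\Diamond\neg\Box \leq \neg\Box$ is exactly Lemma \ref{lS}(i) (which holds in the S-extension without distributivity). For the reverse, instantiate $\circ \leq \Diamond$ at the term $\neg\Box a$ to obtain $\neg\Box \leq \Diamond\neg\Box$. For (ii), Lemma \ref{D(DN)} gives $\Diamond\neg\neg = \Diamond$; instantiating $\circ \leq \Diamond$ at $\neg\neg a$ then yields $\neg\neg a \leq \Diamond\neg\neg a = \Diamond a$, i.e.\ $\neg\neg \leq \Diamond$.

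For (iii), which is the heart of the definability result, start from Lemma \ref{lS}(iv): $\neg\Box\neg = \Box\Diamond$. Apply $\Diamond$-monotonicity (or direct substitution) to $\Box \leq \Diamond$ to get $\Box\Diamond \leq \Diamond\Diamond$, and then use $\Diamond\Diamond = \Diamond$ from the S-extension (Proposition \ref{Diaeq}). Chaining these gives $\neg\Box\neg \leq \Diamond$. Finally, (iv) is immediate: combine (iii) with the reverse inequality $\Diamond \leq \neg\Box\neg$ already recorded in Proposition \ref{NBD}(i).

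The only conceptual point worth flagging is why distributivity is genuinely used: without it, $\Box \leq \Diamond$ can fail (the pentagon example earlier in the paper shows $\neg\Box\neg \leq \Diamond$ already fails in the non-distributive S-extension), so step (iii) breaks. Everything else is a bookkeeping application of the cited lemmas, so I do not foresee any obstacle beyond ensuring the correct identities from sections 4 and 6 are quoted.
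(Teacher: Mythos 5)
Your parts (i), (iii), and (iv) are correct. For (i) you quote Lemma \ref{lS}(i) for $\Diamond\neg\Box\leq\neg\Box$ and get the converse from $\circ\leq\Diamond$ (Lemma \ref{(DN)}(i)), which is exactly the paper's argument. For (iii) you take a slightly different route: the paper derives it from (ii) together with $\neg\neg\Diamond=\neg\Box\neg$ (Proposition \ref{NBD}(iii)), whereas you go through $\neg\Box\neg=\Box\Diamond$ (Lemma \ref{lS}(iv)), $\Box\leq\Diamond$, and $\Diamond\Diamond=\Diamond$. Both are sound; yours has the incidental advantage of not depending on (ii), which matters given the next point.

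There is a genuine slip in (ii): the statement to be proved is $\neg\neg\Diamond\leq\Diamond$, but what you prove is $\neg\neg\leq\Diamond$, a different modal inequality (the modality on the left is $\neg\neg\Diamond$, not $\neg\neg$). Your argument via $\circ\leq\Diamond$ and $\Diamond\neg\neg=\Diamond$ (Lemma \ref{D(DN)}) correctly yields $\neg\neg a\leq\Diamond\neg\neg a=\Diamond a$, but that is not item (ii). The repair is one line: instantiate what you proved at $\Diamond a$ to get $\neg\neg\Diamond a\leq\Diamond\Diamond a$ and then apply $\Diamond\Diamond=\Diamond$ from the S-extension (Proposition \ref{Diaeq}), both of which you already have on the table. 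For comparison, the paper proves (ii) directly from $\Diamond a\vee\neg\Diamond a=1$ (Proposition \ref{Diaeq}(iii)) together with the distributive fact that $a\vee b=1$ implies $\neg b\leq a$ (Lemma \ref{distneg}(iii)). Your closing remark about the pentagon witnessing the failure of $\neg\Box\neg\leq\Diamond$ without distributivity is accurate and matches the paper's own observation in the S-extension section.
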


\begin{proof}
 (i) As distributivity implies that $a \leq \Diamond a$, we have that $\neg \Box \leq \Diamond \neg \Box$.
 On the other hand, by part (iii) of Proposition \ref{NBD}, we have that $\Diamond \neg \Box \leq \neg \Box \Box$.
 So, in the S extension it follows that $\Diamond \neg \Box \leq \neg \Box$.
 (ii) In the S extension we have $\Diamond \vee \neg \Diamond = 1$.
 Having distributivity, we may derive the result.
 (iii) follows from (ii) and part (ii) of Proposition \ref{NBD}.
 (iv) follows from part (iii) and part (i) of Proposition \ref{NBD}.
\end{proof}

Regarding the modalities,
we have the following positive $$\Box \leq x \leq \neg \neg \leq \Diamond = \neg \Box \neg$$ with also
$$\Box \leq \neg \Diamond \neg = \Box \neg \neg \leq \neg \neg$$ ($x$ and $\neg \Diamond \neg = \Box \neg \neg$ being incomparable),
and the following four negative ones: $$\Box \neg \leq \neg \leq \Diamond \neg = \neg \Box \neg \neg \leq \neg \Box.$$

\noindent That all these nine modalities are different may be seen just considering either an atom or the coatom
of the five-element lattice $2^{2} \oplus 1$.

\begin{figure} [ht]
\begin{center}

\begin{tikzpicture}

  \tikzstyle{every node}=[draw, circle, fill=white, minimum size=4pt, inner sep=0pt, label distance=1mm]

    \draw (0,0) node (nBn) [label=right:$\neg \Box \neg$] {}
        -- ++(270:1cm) node (nn) [label=right:$\neg \neg$] {}
        -- ++(225:1cm) node (c) [label=left:$\circ$] {}
        -- ++(315:1cm) node (B) [label=right:$\Box$] {}
        -- ++(45:1cm) node (Bnn) [label=right:$\Box \neg \neg$] {}
        -- (nn);

\end{tikzpicture}

\end{center}
\caption{\label{pdm} The positive modalities in the distributive S-extension}
\end{figure}







\begin{figure} [ht]
\begin{center}

\begin{tikzpicture}

  \tikzstyle{every node}=[draw, circle, fill=white, minimum size=4pt, inner sep=0pt, label distance=1mm]

    \draw (0,0) node (nB) [label=right:$\neg \Box$] {}
        -- ++(270:1cm) node (nBnn) [label=right:$\neg \Box \neg \neg$] {}
        -- ++(270:1cm) node (n) [label=right:$\neg$] {}
        -- ++(270:1cm) node (Bn) [label=right:$\Box \neg$] {};

\end{tikzpicture}

\end{center}
\caption{\label{ndm} The negative modalities in the distributive S-extension}
\end{figure}






The following equations appear in Proposition 10.2.13 in \cite[p. 361]{DunnH}.

\begin{prop}
  Let us consider a lattice of ${\mathbb{ML_\textrm{dS}^{\Box \Diamond}}}$. Then,
 (i) $\Box (a \wedge \Diamond b)=\Box a \wedge \Diamond b$,
 (ii) $\Diamond (a \vee \Box b) = \Diamond a \vee \Box b$,
 (iii) $\Box ( a \vee \Box b) = \Box a \vee \Box b$, and
 (iv) $\Diamond ( a \wedge \Diamond b) = \Diamond a \wedge \Diamond b$.
\end{prop}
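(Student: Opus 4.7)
The plan is to first derive the two simplifying modal identities $\Box \Diamond = \Diamond$ and $\Diamond \Box = \Box$ that hold throughout $\mathbb{ML_\textrm{dS}^{\Box \Diamond}}$, and then to read each of (i)--(iv) off as a short consequence of previously proved distribution lemmas.

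For the identities, I would first note that in the S-extension Propositions \ref{Boxeq}(iii) and \ref{Diaeq}(iii) give $\Box a \vee \neg \Box a = 1$ and $\Diamond a \vee \neg \Diamond a = 1$, so every element of the form $\Box a$ or $\Diamond a$ is Boolean. Combining Lemma \ref{(DN)}(i), which in the distributive setting provides $\Box \leq \circ \leq \Diamond$, once applied to $\Diamond b$ and once applied to $\Box a$, yields $\Box \Diamond b \leq \Diamond b$ and $\Box a \leq \Diamond \Box a$. The reverse inequalities $\Diamond b \leq \Box \Diamond b$ and $\Diamond \Box a \leq \Box a$ are exactly Proposition \ref{pS}(ii) and (i), both of which are available because we are in the S-extension. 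Hence $\Box \Diamond = \Diamond$ and $\Diamond \Box = \Box$.

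With these two identities in hand, (i) and (ii) become immediate applications of Corollary \ref{bwdv}:
\[
\Box(a \wedge \Diamond b) = \Box a \wedge \Box \Diamond b = \Box a \wedge \Diamond b, \qquad \Diamond(a \vee \Box b) = \Diamond a \vee \Diamond \Box b = \Diamond a \vee \Box b.
\]
For (iii), the inequality $\Box a \vee \Box b \leq \Box(a \vee \Box b)$ follows from $\Box$-monotonicity together with the S-schema (since $\Box b = \Box\Box b \leq \Box(a \vee \Box b)$), while the reverse is (D2) from Proposition \ref{Dunn}: $\Box(a \vee \Box b) \leq \Box a \vee \Diamond \Box b = \Box a \vee \Box b$. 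Dually for (iv), one inequality reads $\Diamond(a \wedge \Diamond b) \leq \Diamond a \wedge \Diamond \Diamond b = \Diamond a \wedge \Diamond b$ by Proposition \ref{D}(i) and $\Diamond \Diamond = \Diamond$; the other comes from (D1), which gives $\Diamond a \wedge \Box \Diamond b \leq \Diamond(a \wedge \Diamond b)$, and then $\Box \Diamond b = \Diamond b$ finishes the job.

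There is no real obstacle: once the identities $\Box \Diamond = \Diamond$ and $\Diamond \Box = \Box$ are recorded, each of the four equations collapses to a previous lemma. What makes the identities available is precisely the combination of distributivity (supplying the T-property $\Box \leq \circ \leq \Diamond$) with the S-schema (supplying $\Diamond \leq \Box \Diamond$ and $\Diamond \Box \leq \Box$ via Proposition \ref{pS}); either ingredient alone is insufficient, which explains why these equations are characteristic of the distributive S-extension rather than of $\mathbb{ML^{\Box \Diamond}}$ or $\mathbb{ML_\textrm{d}^{\Box \Diamond}}$ separately.
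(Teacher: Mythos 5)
Your proposal is correct, and its skeleton is the same as the paper's: reduce everything to the previously established distribution laws together with the modal collapses available in the distributive S-extension. The one genuine difference is organizational and pays off in part (i). You extract the identities $\Box \Diamond = \Diamond$ and $\Diamond \Box = \Box$ up front (distributivity gives $\Box \leq \circ \leq \Diamond$, the S-schema gives the reverse inequalities via Proposition \ref{pS}; your opening remark about $\Box a$ and $\Diamond a$ being Boolean is true but not actually needed afterwards). With these recorded, (i) and (ii) follow in one line from Corollary \ref{bwdv}, whereas the paper proves the inequality $\Box a \wedge \Diamond b \leq \Box(a \wedge \Diamond b)$ in (i) by a separate argument through the adjunction (A), showing $\Diamond(\Box a \wedge \Diamond b) \leq a \wedge \Diamond b$ first. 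Your route avoids (A) entirely there and is arguably cleaner. For (iii) and (iv) your argument coincides with the paper's: monotonicity plus $\Box\Box = \Box$ (resp.\ $\Diamond\Diamond = \Diamond$) for one direction, and (D2) (resp.\ (D1)) from Proposition \ref{Dunn} plus the relevant collapse for the other. Your closing observation that both distributivity and (S) are genuinely needed is also consistent with the paper's modality diagrams, where $\Box\Diamond$ and $\Diamond$, and $\Diamond\Box$ and $\Box$, remain distinct in the non-distributive S-extension.
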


\begin{proof}
 (i) We have, using $\Box$-monotonicity, $\Box (a \wedge \Diamond b) \leq \Box a \wedge \Box \Diamond b$,
 where the rhs, using distributivity, is less or equal to $\Box a \wedge \Diamond b$.
 For the other inequality, let us prove that $\Diamond (\Box a \wedge \Diamond b) \leq a \wedge \Diamond b$ and then use (A).
 By $\Diamond$-monotonicity, we have $\Diamond (\Box a \wedge \Diamond b) \leq \Diamond \Box a, \Diamond \Diamond b$,
 which, by (B1) and (S) give our goal.

 \noindent (ii) Using Lemma \ref{(DN)}(iii) and (S), we have that
 $\Diamond (a \vee \Box b) \leq \Diamond a \vee \Diamond \Box b \leq \Diamond a \vee \Box b$.
 For the other inequality, using $\Diamond$-monotonicity, we have, on the one hand, $\Diamond a \leq \Diamond (a \vee \Box b)$.
 On the other hand, $\Box b \leq a \vee \Box b \leq \Diamond ( a \vee \Box b)$.

 \noindent (iii) Using (D2), we have $\Box(a \vee \Box b) \leq \Box a \vee \Diamond \Box b$.
 And using (S), we get $\Box ( a \vee \Box b) \leq \Box a \vee \Box b$.
 For the other inequality, on the one hand, by $\Box$-monotonicity, we have $\Box a  \leq \Box (a \vee \Box b)$.
 On the other hand, by $\Diamond$-monotonicity, we get $\Box \Box b \leq \Box (a \vee \Box b)$,
 which, by (S), gives $\Box b \leq \Box (a \vee \Box b)$, as desired.

 \noindent (iv) By $\Diamond$-monotonicity, we get both $\Diamond (a \wedge \Diamond b) \leq \Diamond a$ and
 $\Diamond ( a \wedge \Diamond b) \leq \Diamond \Diamond b$,
 where the last inequality, by (S), gives $\Diamond (a \wedge \Diamond b) \leq \Diamond b$.
 For the other inequality, using Proposition \ref{Diaeq}(i), we get $\Diamond a \wedge \Diamond b \leq \Diamond a \wedge \Box \Diamond b$.
 Now, using (D1), we have $\Diamond a \wedge \Box \Diamond b \leq \Diamond ( a \wedge \Diamond b)$.
 The goal follows by $\leq$-transitivity.
\end{proof}

Regarding $B$, the operator defined in Section 2, we can now prove that $B = \Box$.

\begin{prop}
 Let us take a meet-complemented distributive lattice with $\Box$ in the $S^{n}$-extension.
 Then, also $B$ exists, with $B = \Box^{n}$.
\end{prop}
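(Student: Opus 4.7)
The plan is to verify that $\Box^{n}a$ satisfies the three defining properties of $Ba$, namely (BE1), (BE2), and (BI); this simultaneously establishes the existence of $Ba$ and its identification with $\Box^{n}a$.

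First, I would establish (BE1), i.e.\ $\Box^{n}a \leq a$. In the distributive extension we have $\Box \leq \circ$ by Lemma \ref{(DN)}(i), so $\Box b \leq b$ for every $b$. Iterating with $\Box$-monotonicity yields $\Box^{n}a \leq \Box^{n-1}a \leq \cdots \leq \Box a \leq a$. (Note that this step is where distributivity is genuinely used; the other steps will go through without it.)

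Next, I would establish (BE2), i.e.\ $\Box^{n}a$ is Boolean. By the $S^{n}$-hypothesis we have $\Box^{n+1}a = \Box^{n}a$, which reads $\Box(\Box^{n}a) = \Box^{n}a$ and in particular gives $\Box^{n}a \leq \Box(\Box^{n}a)$. Lemma \ref{lBB} then yields that $\Box^{n}a$ is Boolean. (This is essentially the content of Lemma \ref{BnB}.)

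Finally, I would establish (BI). Suppose $b \leq a$ and $b \vee \neg b = 1$. By Lemma \ref{lBB}, $b \leq \Box b$. The iteration lemma following Lemma \ref{lBB} (stating that $b \leq \Box b$ implies $b \leq \Box^{m}b$ for every $m \in \mathbb{N}^{\ast}$) gives $b \leq \Box^{n}b$. Applying $\Box$-monotonicity $n$ times to $b \leq a$ yields $\Box^{n}b \leq \Box^{n}a$, and combining these by $\leq$-transitivity gives $b \leq \Box^{n}a$, as required.

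Altogether, $\Box^{n}a$ is the maximum of $\{b \in A : b \leq a \text{ and } b \vee \neg b = 1\}$, so $Ba$ exists and equals $\Box^{n}a$. There is no real obstacle here; the only point requiring a moment's care is to make sure distributivity is invoked for (BE1) (it is what upgrades $\Box\Box \leq \Box$ of Lemma \ref{lmcln}(xi) to the stronger $\Box \leq \circ$ needed to conclude $\Box^{n}a \leq a$), while the remaining two clauses are purely combinatorial consequences of the $S^{n}$-identity and the earlier characterization of Boolean elements.
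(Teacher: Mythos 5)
Your proposal is correct and follows essentially the same three-step verification as the paper ((BE1) from $\Box^{n}\leq\circ$ in the distributive setting, (BE2) from the $S^{n}$-identity via Lemma \ref{lBB}/\ref{BnB}, and (BI) from $b\leq\Box^{n}b$ plus $\Box$-monotonicity and transitivity). The only difference is cosmetic: for $b\leq\Box^{n}b$ you cite Lemma \ref{lBB} together with the iteration lemma of Section 2, where the paper cites Lemma \ref{ldBD}(ii); both routes are valid.
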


\begin{proof}
 We have $\Box^{n} a \leq a$, which already holds in the distributive context, as seen in Lemma \ref{g(DN)}.
 We also have $\Box^{n} a \vee \neg \Box^{n} a = 1$, as stated in Lemma \ref{BnB}.
 Now, suppose (i) $b \leq a$ and (ii) $b \vee \neg b = 1$.
 From (ii), using Lemma \ref{ldBD}(ii), it follows (iii) $b \leq \Box^{n}b$.
 From (i), applying $n$-times $\Box$-monotonicity, we get (iv) $\Box ^{n}b \leq \Box ^{n}a$.
 From (iii) and (iv), by transitivity, we finally get $b \leq \Box^{n}a$.
\end{proof}

\begin{cor}
 Let us take a meet-complemented distributive lattice with $\Box$ in the $S$-extension.
 Then, also $B$ exists, with $B = \Box$.
\end{cor}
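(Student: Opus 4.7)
The plan is to observe that this corollary is nothing more than the case $n=1$ of the immediately preceding proposition, so the proof reduces to identifying the $S$-extension with the $S^{n}$-extension for $n=1$.

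More precisely, recall from Section 6 that the $S^{n}$-extension was defined by the identity $\Box^{n+1} = \Box^{n}$, while the $S$-extension is defined by the schema $\Box \leq \Box\Box$. Combined with the free inequality $\Box\Box \leq \Box$ from Lemma \ref{lmcln}(xi), the schema $\Box \leq \Box\Box$ is equivalent to $\Box\Box = \Box$, which is precisely the $S^{1}$-extension. Hence every lattice in the $S$-extension lies in the $S^{n}$-extension for $n=1$.

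Applying the previous proposition with $n=1$ then yields that $B$ exists and $B = \Box^{1} = \Box$, which is exactly the statement of the corollary. No further work is needed, since the preceding proposition has already done the real job of verifying the three defining properties (BE1), (BE2), and (BI) of $B$ via (i) $\Box^{n}a \leq a$ from Lemma \ref{g(DN)}, (ii) $\Box^{n}a \vee \neg \Box^{n}a = 1$ from Lemma \ref{BnB}, and (iii) the inductive argument using Lemma \ref{ldBD}(ii) and $\Box$-monotonicity.

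There is no real obstacle here; the only thing to be careful about is matching terminology between the ``$S$-extension'' and the ``$S^{1}$-extension'', which is immediate from the definitions.
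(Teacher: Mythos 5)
Your proof is correct and coincides with what the paper intends: the corollary is stated without proof precisely because it is the $n=1$ instance of the preceding proposition, and your identification of the $S$-extension with the $S^{1}$-extension (via the free inequality $\Box\Box\leq\Box$ of Lemma \ref{lmcln}(xi) together with schema (S)) is exactly the right bridge. Nothing further is needed.
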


However, we still do not have $\Box = \Delta$, for $\Delta$ as defined in fuzzy logic,
because, taking the coatoms $a$ and $b$ in the lattice $1 \oplus 2^{2}$,
it is not the case that $\Box (a \vee b) = \Box a \vee \Box b$.
\vskip5pt

In what follows, we call \emph{$\Box$-$\Diamond$-algebra} any algebra in $\mathbb{ML_\textrm{d}^{\Box \Diamond}}$
and \emph{S-algebra} any algebra in the S-extension of ${\mathbb{ML_\textrm{d}^{\Box \Diamond}}}$.
It is a well know fact that any finite distributive lattice is the lattice of upsets of a finite poset \cite{DavPr}.
In Section \ref{five}, we have noticed how the lattice of upsets of a finite poset becomes a $\Box$-$\Diamond$-algebra.
In the rest of this section, we characterize those finite posets whose $\Box$-$\Diamond$-algebra of upsets is an S-algebra.
We call this posets, \emph{S-posets}.
\vskip5pt
We start by fixing some terminology.
Let $P$ be a finite poset and $x, y \in P$.
A \emph{zigzag-path} (z-path for short) from $x$ to $y$ in $P$ is a finite sequence of elements in $P$, $(z_0, z_1, ..., z_n)$,
such that $z_0 = x$, $z_n = y$, and for $0 \leq i < n$, either $z_i \leq z_{i+1}$ or $z_{i+1} \leq z_{i}$.
The \emph{length} of a z-path $(z_0, z_1, ..., z_n)$ is the number of elements in the sequence defining the path less one;
$n$ in the given case.
We define the \emph{zigzag distance} between two-elements in $P$ as the minimum of the set of lengths of z-paths between them, if this set is not void,
and $\infty$ if the set is void.
We write $\mathrm{zd}(x,y)$ for the zigzag distance between $x$ and $y$.
For $x$, $y$ in $P$, we say that $x$ and $y$ \emph{are in the same zigzag component} of $P$, whenever $\mathrm{zd}(x,y) \neq \infty$.
The \emph{zigzag component} of $x \in P$ is the set $C(x) := \{y \in P \ : \ \textrm{zd}(x,y) \neq \infty\}$.

\begin{prop}
The $\Box$-$\Diamond$-algebra of a finite poset $P$ is an S-algebra if and only for every pair of elements $x$, $y$ in $P$,
either $\mathrm{zd}(x,y)=\infty$ or $\mathrm{zd}(x,y) \leq 2$.
\end{prop}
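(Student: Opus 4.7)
The centerpiece of the argument is a concrete description of $\Diamond$ on the upset lattice of $P$: for every upset $A$,
\[
\Diamond A \;=\; \{\, z \in P : \exists a \in A \text{ with } \mathrm{zd}(z, a) \leq 2 \,\}.
\]
One inclusion is immediate from the definition $z \in \Diamond A \iff \exists w, v : w \leq z,\ w \leq v,\ v \in A$, which already exhibits a z-path $z \geq w \leq v$ of length at most $2$ to a point of $A$. For the other inclusion I do a small case analysis on the up/down pattern of a length-$\leq 2$ z-path from $z$ to $a \in A$, crucially using that in the ``up-down'' case $z \leq v \geq a$ the upset property of $A$ gives $v \in A$, supplying the required witness. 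By induction on $n$ together with the triangle inequality for $\mathrm{zd}$, this iterates to $\Diamond^{n} A = \{z : \exists a \in A,\ \mathrm{zd}(z, a) \leq 2n\}$.

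Sufficiency then falls out at once. The S-algebra property is equivalent to $\Diamond\Diamond A \subseteq \Diamond A$ for every upset $A$. If $z \in \Diamond\Diamond A$, the formula provides $a \in A$ with $\mathrm{zd}(z, a) \leq 4 < \infty$, whence the hypothesis forces $\mathrm{zd}(z, a) \leq 2$ and thus $z \in \Diamond A$.

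For necessity I fix $x, y$ in the same zigzag component and take $n$ with $2n \geq \mathrm{zd}(x, y)$, so that $x \in \Diamond^{n}(\uparrow y)$. Iterating the S-axiom $\Diamond^{2} = \Diamond$, which is equivalent to $\Box \leq \Box\Box$ via the adjunction $\Diamond \dashv \Box$ of Lemma \ref{BD}, collapses this to $x \in \Diamond(\uparrow y)$; unpacking yields $v \geq y$ with $\mathrm{zd}(x, v) \leq 2$. Combined with $\mathrm{zd}(v, y) \leq 1$, the triangle inequality gives only $\mathrm{zd}(x, y) \leq 3$, and the main obstacle is sharpening this to the required $\leq 2$. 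To close that gap I would refine the choice of $A$ away from the naive $\uparrow y$---typically to the principal upset of a maximal element sitting above $y$, or to a subupset of $\uparrow y$ chosen to miss the $2$-ball of $x$---and combine the $\Diamond$-form of the S-axiom with its $\Box$-dual from Proposition \ref{pS}, so that the intermediate element $v$ collapses onto $y$ in the extracted z-path.
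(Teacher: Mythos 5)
Your first half is sound and coincides with the paper's strategy: the description $\Diamond A=\uparrow\downarrow A=\{z:\mathrm{zd}(z,A)\le 2\}$ for an upset $A$ is correct (the up--down case does need the upset property of $A$, exactly as you note), and together with $\Diamond^{2}A\subseteq\{z:\mathrm{zd}(z,A)\le 4\}$ it yields the ``if'' direction just as the paper does, by reduction to principal upsets.

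The gap you flag in the ``only if'' direction is genuine, and no refinement of the choice of $A$ will close it, because that direction of the proposition is false as stated: the bound $\mathrm{zd}(x,y)\le 3$ you extract is essentially all that the S-condition gives for a single pair. Concretely, let $P=\{a,p_2,x,y,p_1,b\}$ with covering relations $a<x<p_1$, $p_2<y<b$, $p_2<p_1$, and $a<b$ (a hexagonal ``crown'': every minimal element lies below every maximal one, while $x$ and $y$ are middle elements with $\uparrow x\cap\uparrow y=\downarrow x\cap\downarrow y=\emptyset$). Then $\mathrm{zd}(x,y)=3$, yet $\uparrow\downarrow(\uparrow p)=P$ for every $p\in P$, hence $\Diamond U=P$ for every nonempty upset $U$ and $\Diamond\Diamond=\Diamond$ holds identically, so the upset algebra is an S-algebra. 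What the S-axiom actually forces at distance $3$ is only that an up--down--up z-path between $x$ and $y$ be accompanied by a down--up--down one of the same length (both are present in the crown), which does not contradict minimality; distance $4$ and distances $\ge 5$ are genuinely excluded by the argument you and the paper both use. The paper's own proof has the identical defect --- the cases $\mathrm{zd}(a,b)=3,4$ are dismissed with an unproved ``analogously'' --- and indeed the paper's later corollary, which characterizes S-posets by the absence of a four-element fence inside the subposet of maximal and minimal elements, classifies the crown above as an S-poset, contradicting the present proposition. So rather than sharpening $3$ to $2$, the statement itself must be weakened, e.g.\ to a condition involving only z-paths through maximal and minimal elements.
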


\begin{proof}
Without loss of generality we may suppose that $P$ has only one zigzag component,
since the map $A \mapsto \Diamond A = \uparrow \downarrow A$ preserves components.

Assume that  $\mathrm{zd}(x,y) \leq 2$, for every pair of elements $x, y \in P$,
without loss of generality since we cannot have $\mathrm{zd}(x,y)=\infty$.
For $A = \ \uparrow\{a\}$, we have that $\Diamond A = \ \uparrow \downarrow \uparrow\{a\} = P$,
since any element of $P$ is at a zigzag distance of 2 or less from $a$.
Hence $\Diamond^2 \uparrow\{a\} = \Diamond P = P = \Diamond \uparrow\{a\}$.

Take now an arbitrary upset $A$ of $P$.
Since $A$ is finite, there are elements $a_1$, ..., $a_n$ in $P$ such
that $\displaystyle A = \bigcup_{i=1}^{n}\uparrow\{a_i\}$ (for example, take the minimals of $A$).
Hence, we get
\[
\Diamond A = \ \uparrow \downarrow A = \ \uparrow \downarrow(\bigcup_{i=1}^{n}\uparrow\{a_i\}) = \bigcup_{i=1}^{n}\uparrow \downarrow \uparrow\{a_i\} = P.
\]
It follows again that $\Diamond^2 A = P = \Diamond A$. In consequence, the $\Box$-$\Diamond$-algebra of $P$ is an S-algebra.

On the other hand, let us assume that for every upset $A$ of $P$, $\uparrow \downarrow A = \uparrow \downarrow \uparrow \downarrow A$.
Take $a, b \in P$ with $\mathrm{zd}(a,b) \neq \infty$.
Hence, there is a z-path $a = p_0, ..., p_n = b$ joining $a$ with $b$ of minimum length.
Hence, either $\mathrm{zd}(a,b) \leq 4$ or there exists $0 \leq i < n - 4$,
such that $p_{i} > p_{i + 1} <  p_{i + 2} > p_{i + 3} < p_{i + 4}$.
Suppose there exists $0 \leq i < n - 4$, such that $p_{i} > p_{i + 1} <  p_{i + 2} > p_{i + 3} < p_{i + 4}$.
Then, $p_{i}, p_{i + 4} \in \uparrow \downarrow \uparrow \downarrow \uparrow p_{i}$.
Since $X = \uparrow p_{i}$ is an upset of $P$,
we have $\uparrow \downarrow \uparrow \downarrow \uparrow p_{i} = \uparrow \downarrow \uparrow p_{i}$,
and in consequence $p_{i}, p_{i + 4} \in \uparrow \downarrow \uparrow p_{i}$.
Hence there is a z-path of length at most 3 joining $p_{i}$ with $p_{i + 4}$,
contradicting the minimality of the z-path $a = p_0, ..., p_n = b$.
Analogously, if $\mathrm{zd}(a,b) \leq 4$,  we arrive to contradictions if we assume $\mathrm{zd}(a,b) = 3$ or $\mathrm{zd}(a,b) = 4$.
\end{proof}

\begin{cor}
The Let $P$ be a finite poset such that each of its zigzag components has either a first or a last element, then $P$ is an S-poset.
\end{cor}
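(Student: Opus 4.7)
The plan is to reduce to a single zigzag component and exhibit, within each such component, an explicit z-path of length at most $2$ between any two elements. Since the zigzag distance function clearly restricts to each component (a z-path stays inside the component of its endpoints), and the characterization proven in the previous proposition is a component-by-component statement (either $\mathrm{zd}(x,y)=\infty$, when $x,y$ lie in different components, or $\mathrm{zd}(x,y)\le 2$), it suffices to fix a component $C$ and bound zigzag distances inside $C$ by $2$.

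Let $C$ be a zigzag component of $P$ and assume first that $C$ has a minimum element $m$. Then for every $x\in C$ we have $m\le x$ (here I am implicitly using that a first element of $C$, being a bound for $C$ in $P$, is in particular a lower bound for each element of $C$; equivalently, the minimum of $C$ in the induced order is a minimum in $P$ restricted to $C$). Given any $x,y\in C$, the sequence $(x,m,y)$ is a z-path of length $2$, because $m\le x$ and $m\le y$. Hence $\mathrm{zd}(x,y)\le 2$. The dual argument, using the sequence $(x,M,y)$ with $x\le M$ and $y\le M$, handles the case in which $C$ has a maximum element $M$ instead.

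Combining the two cases with the trivial observation that $\mathrm{zd}(x,y)=\infty$ when $x$ and $y$ lie in distinct components, every pair $x,y\in P$ satisfies $\mathrm{zd}(x,y)=\infty$ or $\mathrm{zd}(x,y)\le 2$. By the preceding proposition, the $\Box$-$\Diamond$-algebra of upsets of $P$ is therefore an S-algebra, i.e., $P$ is an S-poset.

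I do not foresee any real obstacle: the argument is a one-line reduction once the correct z-path is written down. The only point that deserves a brief remark is the clarification that ``first/last element of a component'' is taken in the induced order, so that it serves as the required common bound witnessing the length-$2$ zigzag between arbitrary pairs in the component.
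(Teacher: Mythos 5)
Your proof is correct and is exactly the intended argument: the paper states this corollary without proof as an immediate consequence of the preceding proposition, and the length-$2$ z-path $(x,m,y)$ through the first (or dually last) element of the component is the evident witness. No issues.
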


The reciprocal of the Corollary above does not hold since the poset of Figure \ref{nots} is an example of
an S-poset with just one component and without first or last element.

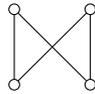
\begin{figure} [H]
\begin{center}

\begin{tikzpicture}
  \tikzstyle{every node}=[draw, circle, fill=white, minimum size=4pt, inner sep=0pt, label distance=1mm]
    \draw (0,0) node (1) [] {}
        -- ++(270:1cm) node (0a) [] {}
        -- ++(45:1.4142cm) node (b) [] {}
        -- ++(270:1cm) node (0b) [] {};
    \draw (1) -- (0b);
\end{tikzpicture}

\end{center}
\caption{\label{nots} A four element partial order}
\end{figure}

\noindent However, we can give the following pictorial characterization of S-posets.

\begin{cor}
Let $P$ be a finite poset, $MP$ the set of maximal elements of $P$ and $mP$ the set of minimal elements of $P$.
Consider $P^0 = MP \cup mP$ as a subposet of $P$.
Then, $P$ is an S-poset if and only if $P^0$ does not have an isomorphic copy of the poset of Figure \ref{N}, as subposet.
\end{cor}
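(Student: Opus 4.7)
The approach is to work directly on the lattice of upsets of $P$, using the explicit description $\Diamond A = \uparrow \downarrow A$ and the characterization of the S-extension given by $\Diamond^{2} = \Diamond$. I label the four nodes of the poset of Figure~\ref{N} as $a, b$ (its two minimal elements) and $c, d$ (its two maximal elements), with relations $a < c$, $b < c$, $b < d$, and $a \not\leq d$.

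For the direction $\Rightarrow$, I argue contrapositively. If $P^{0}$ contains an induced copy of N with that labelling, then automatically $a, b \in mP$ and $c, d \in MP$ (an element of $P^{0}$ that is strictly below another element of $P^{0}$ in $P$ cannot be maximal, resp.\ minimal, in $P$). Take $A = \{d\}$, which is an upset since $d$ is maximal. From $b \leq d$ and $c \geq b$ we obtain $c \in \Diamond A$, and from $a \leq c$ we then get $a \in \Diamond^{2} A$. On the other hand, $a \notin \Diamond A$: a witness $y$ with $a \geq y \leq d$ would have to equal $a$ (as $a$ is minimal in $P$), forcing $a \leq d$, contrary to hypothesis. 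Hence $\Diamond^{2} A \neq \Diamond A$ and $P$ is not an S-poset.

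For the direction $\Leftarrow$, I show $\Diamond^{2} A \subseteq \Diamond A$ for every upset $A$. Given $x \in \Diamond^{2} A$, unfold the definition to obtain $x \geq w_{1} \leq w_{2} \geq w_{3} \leq w_{4} \in A$. Choose a minimal $m_{1} \leq w_{1}$, a minimal $m_{3} \leq w_{3}$, a maximal $M_{2} \geq w_{2}$ and a maximal $M_{4} \geq w_{4}$; note that $M_{4} \in A$ because $A$ is upward closed, and that the chain forces $m_{1} \leq M_{2}$, $m_{3} \leq M_{2}$, and $m_{3} \leq M_{4}$. If $m_{1} \leq M_{4}$, then $x \geq m_{1} \leq M_{4} \in A$ is a length-$2$ zigzag witnessing $x \in \Diamond A$. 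If instead $m_{1} \not\leq M_{4}$, then the four elements $m_{1}, m_{3}, M_{2}, M_{4}$ of $P^{0}$ are pairwise distinct (any coincidence such as $m_{1} = m_{3}$ or $M_{2} = M_{4}$ would, via the preceding inequalities, produce $m_{1} \leq M_{4}$), and the incomparabilities $m_{1} \parallel m_{3}$, $M_{2} \parallel M_{4}$, and $m_{1} \parallel M_{4}$ all hold (distinct minimals, resp.\ maximals, are incomparable, and a maximal element cannot lie strictly below a minimal one). Hence $\{m_{1}, m_{3}, M_{2}, M_{4}\}$ is an induced copy of N in $P^{0}$, contradicting the hypothesis.

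The main obstacle is the last step of the $\Leftarrow$-direction: one has to verify patiently that the four extracted elements really constitute an induced copy of N in $P^{0}$ --- distinctness, the three required inequalities, and the absence of accidental extra inequalities must all be checked, each via the same small trick of propagating $\leq$ through the chain and invoking minimality or maximality. Once this bookkeeping is settled, the two directions together give the corollary.
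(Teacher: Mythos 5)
Your proof is correct. The paper states this corollary without any proof at all, leaving it implicitly as a consequence of the preceding proposition (the characterization of S-posets by ``every finite zigzag distance is $\leq 2$''); the intended route would presumably be to show that a pair at finite zigzag distance $\geq 3$ exists if and only if $P^0$ contains an induced copy of N. You instead argue directly with $\Diamond A = \uparrow\downarrow A$ and the condition $\Diamond^2 = \Diamond$, never invoking the zigzag-distance proposition. The two routes are closely parallel --- your normalization of the chain $x \geq w_1 \leq w_2 \geq w_3 \leq w_4$ by pushing $w_1, w_3$ down to minimals and $w_2, w_4$ up to maximals is exactly the step that would convert a length-$3$ zigzag into an N inside $P^0$ --- but your version is self-contained and actually supplies the bookkeeping (distinctness of the four extracted elements, the three comparabilities, the three incomparabilities) that the paper omits entirely. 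Two small points worth making explicit: first, ``subposet'' must be read as \emph{induced} subposet, since the poset of Figure \ref{nots} is an S-poset yet contains N as a non-induced subposet; your argument correctly uses the induced reading in both directions (in particular, $a \not\leq d$ in the forward direction relies on it). Second, your forward direction uses that $\{d\}$ is an upset because $d$ is maximal in $P$, which is why passing to $P^0$ rather than working with arbitrary elements of $P$ is essential there; this is handled correctly.
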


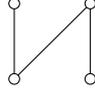
\begin{figure} [H]
\begin{center}

\begin{tikzpicture}

  \tikzstyle{every node}=[draw, circle, fill=white, minimum size=4pt, inner sep=0pt, label distance=1mm]

    \draw (0,0) node (1) [] {}
        -- ++(270:1cm) node (0a) [] {}
        -- ++(45:1.4142cm) node (b) [] {}
        -- ++(270:1cm) node (0b) [] {};

\end{tikzpicture}

\end{center}
\caption{\label{N} A partial order}
\end{figure}



\section{Adding the relative meet-complement}

So far, we have not used the relative meet-complement, which we now define,
in the context of any lattice {\bf{L}} and for any $a, b \in L$ as usual, that is,
$a \to b=max\{c \in L: a \wedge c \leq b \}$.
As very well known and already proved by Skolem (see \cite{Sko1} or \cite{Sko2}),
we automatically get distributivity.
In this section (S) is not required.

We state some results concerning the relationship between $\to$, $\Box$, and $\Diamond$.

\begin{prop} \label{RMC}
 Let us consider a lattice of $\mathbb{ML_\textrm{d}^{\Box \Diamond}}$ with $\to$. Then,

 (1) $\Box(a \to b) \leq \Box a \to \Box b$,

 (2) $\Box(a \to b) \leq \Diamond a \to \Diamond b$,

 (3) $\Diamond a \to \Box b \leq \Box (a \to b)$.
\end{prop}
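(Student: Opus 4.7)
The plan is to prove (1), (2), (3) in increasing order of difficulty, using in each case the residuation inequality $(a \to b) \wedge a \leq b$ together with the modal lemmas already established.

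For (1), my idea is to use that $\Box$ distributes over binary meets (Corollary~\ref{bwdv}(i)). By $\Box$-monotonicity applied to the residuation inequality,
\[
\Box(a \to b) \wedge \Box a \;=\; \Box\bigl((a \to b) \wedge a\bigr) \;\leq\; \Box b,
\]
and the defining property of $\to$ then gives $\Box(a \to b) \leq \Box a \to \Box b$. For (2), I would invoke the mixed distributivity \textbf{(D1)} from Proposition~\ref{Dunn}: $\Diamond a \wedge \Box(a \to b) \leq \Diamond(a \wedge (a \to b)) \leq \Diamond b$, using $\Diamond$-monotonicity and the residuation inequality. Hence $\Box(a \to b) \leq \Diamond a \to \Diamond b$ by the definition of $\to$.

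For (3), which I expect to be the main obstacle, set $c := \Diamond a \to \Box b$, so that $c \wedge \Diamond a \leq \Box b$ by definition of $\to$. By the adjunction \textbf{(A)} of Lemma~\ref{BD}, showing $c \leq \Box(a \to b)$ reduces to showing $\Diamond c \leq a \to b$, equivalently $\Diamond c \wedge a \leq b$. The difficulty is that neither \textbf{(D1)} nor \textbf{(D2)} applies directly to an expression of the form $\Diamond c \wedge a$: a $\Box$ is needed as the second conjunct. To manufacture one, I would raise $a$ via \textbf{(B1)}, namely $a \leq \Box \Diamond a$, obtaining $\Diamond c \wedge a \leq \Diamond c \wedge \Box \Diamond a$. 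Then \textbf{(D1)} gives $\Diamond c \wedge \Box \Diamond a \leq \Diamond(c \wedge \Diamond a)$; $\Diamond$-monotonicity applied to $c \wedge \Diamond a \leq \Box b$ yields $\Diamond(c \wedge \Diamond a) \leq \Diamond \Box b$; and \textbf{(B2)} collapses this to $b$. Chaining these inequalities delivers $\Diamond c \wedge a \leq b$, which via (A) completes the proof. The single insight that drives (3) is that the ``missing'' $\Box$ on $a$ is exactly supplied by the Brouwerian inequality $a \leq \Box \Diamond a$; once this is noticed, the three moves D1, $\Diamond$-monotonicity, and B2 line up mechanically.
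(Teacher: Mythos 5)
Your proof is correct. Part (1) is essentially identical to the paper's: both reduce to $\Box(a\to b)\wedge\Box a\leq\Box b$ via residuation, $\Box$-monotonicity, and $\Box(a\wedge b)=\Box a\wedge\Box b$ from Corollary \ref{bwdv}(i). Parts (2) and (3), however, follow a genuinely different route. For (2) the paper does not use (D1) at all; it starts from $a\leq\neg(\neg b\wedge(a\to b))$ and pushes $\Diamond$ through a chain of negation lemmas (Lemma \ref{(DN)}(iv), Lemma \ref{D}, and $\neg\Diamond=\Box\neg$ from Proposition \ref{NBD}) to arrive at $\Diamond a\leq\Diamond b\vee\neg\Box(a\to b)$. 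Your one-line derivation $\Diamond a\wedge\Box(a\to b)\leq\Diamond(a\wedge(a\to b))\leq\Diamond b$ from (D1) is shorter and, since Proposition \ref{Dunn} is available in $\mathbb{ML_\textrm{d}^{\Box\Diamond}}$ without (S), entirely legitimate. For (3) the paper works with the definitional characterizations ($\Box$E)/($\Box$I): it shows $\neg\Box(a\to b)\leq\neg\neg\Diamond a\wedge\neg\Box b\leq\neg(\Diamond a\to\Box b)$ and combines this with $(a\to b)\vee\neg\Box(a\to b)=1$ to get $(a\to b)\vee\neg(\Diamond a\to\Box b)=1$, whence ($\Box$I) applies. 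Your argument instead exploits the adjunction (A) to reduce the goal to $\Diamond(\Diamond a\to\Box b)\wedge a\leq b$ and then chains (B1), (D1), $\Diamond$-monotonicity, and (B2); this is the standard adjoint-pair derivation of the K-type axiom and isolates exactly where distributivity enters (through (D1)). Each approach buys something: yours makes the residuated/adjoint structure do the work and is more modular, while the paper's stays closer to the max/min definitions of $\Box$ and $\Diamond$ and avoids invoking the adjunction for (3). Both are valid in the stated setting.
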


\begin{proof}
 (1) By a property of $\to$, it is enough to prove that $\Box(a \to b) \wedge \Box a \leq \Box b$.
 Now, also by a property of $\to$, we have that $(a \to b) \wedge a \leq b$.
 So, by monotonicity of $\Box$, it follows that $\Box ((a \to b) \wedge a) \leq \Box b$.
 Then, we reach our goal using that $\Box (a \to b) \wedge \Box a \leq \Box ((a \to b) \wedge a)$,
 which follows from Corollary \ref{bwdv}(i).

 (2) We have that $a \leq \neg(\neg b \wedge (a \to b))$, which, by monotonicity of $\Diamond$, implies that
 $\Diamond a \leq \Diamond \neg(\neg b \wedge (a \to b))$, which, using Lemma \ref{(DN)}(iv), implies that
 $\Diamond a \leq \Diamond \neg \neg y \vee \Diamond \neg (a \to b)$, which, using Lemma \ref{D}, implies that
 $\Diamond a \leq \Diamond b \vee \Diamond \neg (a \to b)$, which, using Proposition \ref{NBD}(iii), implies that
 $\Diamond a \leq \Diamond b \vee \neg \Box (a \to b)$, which finally implies that
 $\Box (a \to b) \leq \Diamond a \to \Diamond b$.

 (3) We have, by ($\Box$E), (i) $(a \to b) \vee \neg \Box (a \to b) = 1$.
 We also have that $\neg \Box(a \to b) \leq \neg \neg \Diamond a$ and $ \neg \Box (a \to b) \leq \neg \Box b$.
 So, (ii) $\neg \Box (a \to b) \leq \neg \neg \Diamond a \wedge \neg \Box b$.
 Then, from (i) and (ii) we get $(a \to b) \vee (\neg \neg \Diamond a \wedge \neg \Box b) = 1$.
 So, $(a \to b) \vee \neg (\Diamond a \to \Box b) = 1$,
 which finally implies, using ($\Box$I), that $\Diamond a \to \Box b \leq \Box (a \to b)$.
\end{proof}

Inequalities (1) and (2) in Proposition \ref{RMC}, (vii) of Proposition \ref{D}, 
one inequality of (ii) of Corollary \ref{bwdv}, and
(3) in Proposition \ref{RMC} are the algebraic versions of the K-axioms IK1-IK5 in \cite[p. 52]{S}, respectively. 

Let us finally note that $\Box$ exists in every subdirectly irreducible Heyting algebra, because
in those algebras, if $a = 1$ then $\Box a = 1$, else $\Box a = 0$.
In fact, the equation $\Box \Box x \approx \Box x$
also holds in every subdirectly irreducible Heyting algebra.
Hence, both in the variety of Heyting algebras with $\Box$ and in the variety of Heyting algebras with $S$-extended $\Box$,
no new equation involving only the Heyting operations holds.










\end{document}